\documentclass[12pt,a4paper,amsmath,amssymb]{amsart}

\allowdisplaybreaks

\usepackage[latin1]{inputenc}
\usepackage{amsthm}
\usepackage{latexsym}
\usepackage{amssymb}
\usepackage{amsfonts}
\usepackage{eufrak}
\usepackage{color} 
\usepackage{graphicx}
\usepackage{sidecap}
\usepackage{adjustbox}
\usepackage{subcaption}
\usepackage{hyperref}
\usepackage{enumerate}
\usepackage{amsaddr}
\usepackage[abs]{overpic}
\usepackage{pict2e}
\usepackage{doi}

\newtheorem{theorem}{Theorem}
\newtheorem{proposition}[theorem]{Proposition}
\newtheorem{lemma}[theorem]{Lemma}
\newtheorem{corollary}[theorem]{Corollary}

\theoremstyle{definition}
\newtheorem{definition}[theorem]{Definition}

\newtheorem{remark}[theorem]{Remark}

\definecolor{lighter}{RGB}{10.0,146.0,35.0}
\definecolor{darker}{RGB}{2.0,100.0,20.0}
\definecolor{sininen}{RGB}{89.5,101.8,238.1}
\definecolor{punainen}{RGB}{227.0,12.0,12.0}

\newcommand{\lighter}[1]{\textcolor{lighter}{#1}}
\newcommand{\darker}[1]{\textcolor{darker}{#1}}
\newcommand{\punainen}[1]{\textcolor{punainen}{#1}}
\newcommand{\sininen}[1]{\textcolor{sininen}{#1}}

\newcommand{\mc}[1]{\mathcal{#1}}

\newcommand{\mb}[1]{\mathbb{#1}}

\newcommand{\N}{\mathbb N}
\newcommand{\R}{\mathbb R}

\newcommand{\Q}{\mathbb Q}
\newcommand{\T}{\mathbb T}

\newcommand{\tr}[1]{\mathrm{Tr}\left(#1\right)} 
\def\<{\langle}
\def\>{\rangle}

\newcommand{\fii}{\varphi}

\newcommand{\tj}{\vartheta}


\newcommand{\V}{\mathcal{V}}

\DeclareMathOperator*{\esssup}{ess\,sup}

\makeatletter
\DeclareRobustCommand\bigop[1]{%
	\mathop{\vphantom{\sum}\mathpalette\bigop@{#1}}\slimits@
}
\newcommand{\bigop@}[2]{%
	\vcenter{%
		\sbox\z@{$#1\sum$}%
		\hbox{\resizebox{\ifx#1\displaystyle.9\fi\dimexpr\ht\z@+\dp\z@}{!}{$\m@th#2$}}%
	}%
}
\makeatother
\newcommand{\bigboxplus}{\DOTSB\bigop{\boxplus}}

\newcommand{\rleq}{\preceq}
\newcommand{\rgeq}{\succeq}

\setlength{\textwidth}{17cm}
\addtolength{\textheight}{2.0cm}
\addtolength{\voffset}{-2.4cm}
\addtolength{\hoffset}{-2.0cm}

\begin{document}
	
	\title[Matrix Majorization in Large Samples with Varying Support Restrictions]{Matrix Majorization in Large Samples\\with Varying Support Restrictions}

    \author{Frits Verhagen}
	\address{Centre for Quantum  Technologies,  National University of Singapore, Singapore}
    \thanks{Corresponding author's email address: {\tt verhagen.frits@gmail.com}.}
    \thanks{The results in this paper were presented in part at the Quantum Resources Workshop, December 2023, Singapore, and the Focused Workshop on Quantum R\'enyi Divergences, July 2024, Erd\H{o}s Center, Budapest.}
    \author{Marco Tomamichel}
	\address{Centre for Quantum  Technologies,  National University of Singapore, Singapore}
	\address{Department of Electrical and Computer Engineering, National University of Singapore, Singapore}
	\author{Erkka Haapasalo}
	\address{Centre for Quantum  Technologies,  National University of Singapore, Singapore}

\begin{abstract}
We say that a matrix $P$ with non-negative entries majorizes another such matrix $Q$ if there is a stochastic matrix $T$ such that $Q=TP$. We study matrix majorization in large samples and in the catalytic regime in the case where the columns of the matrices need not have equal support, as has been assumed in earlier works. We focus on two cases: either there are no support restrictions (except for requiring a non-empty intersection for the supports) or the final column dominates the others. Using real-algebraic methods, we identify sufficient and almost necessary conditions for majorization in large samples or when using catalytic states under these support conditions. These conditions are given in terms of multivariate divergences that generalize the R\'enyi divergences. We notice that varying support conditions dramatically affect the relevant set of divergences. Our results find an application in the theory of catalytic state transformation in quantum thermodynamics. 
\end{abstract}
    
\maketitle
	
\section{Introduction}

{\it Statistical experiments} are a mathematical description of the distributions of outcomes from an experiment on some physical system. They are described by a family $P=\{p^\tj\}_{\tj\in\Theta}$ of probability measures or vectors $p^\tj$ on the space of outcomes of the experiment, where the index set $\Theta$ represents the different states the system can be in. To avoid redundancy, it is important to characterize when one experiment is more informative than another, i.e.,\ we need to be able to compare experiments. Due to results by Blackwell \cite{Blackwell51,Blackwell53}, the information order of experiments can be characterized in an intuitive way: An experiment $P=\{p^\tj\}_{\tj\in\Theta}$ is more informative than an experiment $Q=\{q^\tj\}_{\tj\in \Theta}$, or $P$ {\it majorizes} $Q$, if and only if there is a stochastic map $T$ such that $q^\tj=Tp^\tj$ for all $\tj\in\Theta$. In this case, we write $P\rgeq Q$.

Majorization of one experiment $P=\{p^{\tj}\}_{\tj\in\Theta}$ over another experiment $Q=\{q^\tj\}_{\tj\in\Theta}$ can also be aided by having many copies of the experiments or by catalysis. The former situation means that
\begin{equation}
\left\{\big(p^\tj\big)^{\otimes n}\right\}_{\tj\in\Theta}\rgeq\left\{\big(q^\tj\big)^{\otimes n}\right\}_{\tj\in\Theta}
\end{equation}
for sufficiently large $n\in\N$. Note that $p^{\otimes n}$ denotes the $n$-th Kronecker product power of $p$. We call this {\it majorization in large samples}. The latter situation means that there exists a third experiment $R=\{r^\tj\}_{\tj\in\Theta}$ (a catalyst) such that
\begin{equation}
\big\{p^\tj\otimes r^\tj\big\}_{\tj\in\Theta}\rgeq\big\{q^\tj\otimes r^\tj\big\}_{\tj\in\Theta}.
\end{equation}
This is called {\it catalytic majorization}. Large-sample majorization implies catalytic majorization \cite{Duan2005}, but the reverse implication is false in general \cite{Feng_et_al_2006}. However, the exact difference between large-sample and catalytic majorization is still unclear. In this work, we are particularly interested in large-sample and catalytic majorization of finite experiments \cite{Heyer1982} where $\Theta=\{1,\ldots,d\}$ is a finite set and the probability measures of all experiments are finite probability vectors. Majorization in this case is also called {\it matrix majorization} \cite{dahl99}. When we identify a finite experiment $P=\big(p^{(1)},\ldots,p^{(d)}\big)$ with a matrix with columns $p^{(k)}$, majorization $P\rgeq Q$ is equivalent to the existence of a stochastic matrix $T$ such that $Q=TP$. Note that a stochastic matrix is a matrix with non-negative entries where all the columns are probability vectors.

In the case $\Theta=\{1,2\}$, majorization in large samples (under certain regularity conditions) is characterized by inequalities involving the R\'{e}nyi divergences \cite{Mu_et_al_2020}. In \cite{farooq2024}, this result was extended to the general case $\Theta=\{1,\ldots,d\}$ for any $d\in\N$ in the case of finite experiments. According to \cite{farooq2024}, matrix majorization in large samples is characterized by multivariate generalizations of the R\'{e}nyi divergences: For any $\underline{\alpha}=(\alpha_1,\ldots,\alpha_d)\in\R^d$ such that $\max_{1\leq k\leq d}\alpha_k\neq 1$, we define the quantity $D_{\underline{\alpha}}(P)$,
\begin{equation}\label{eq:multivarD}
D_{\underline{\alpha}}(P)=\frac{1}{\max_{1\leq k\leq d}\alpha_k-1}\log\sum_{i=1}^n \big(p^{(1)}_i\big)^{\alpha_1}\cdots\big(p^{(d)}_i\big)^{\alpha_d}, 
\end{equation}
where $P=\big(p^{(1)},\ldots,p^{(d)}\big)$ and $p^{(k)}=\big(p^{(k)}_1,\ldots,p^{(k)}_n\big)$ for $k=1,\ldots,d$. The relevant quantities are those $\underline{\alpha}\in\R^d$ with 
\begin{equation}\label{eq:A+}
\underline{\alpha}\in A_+:=\{(\alpha_1,\ldots,\alpha_d)\in\R^d\,|\,\alpha_1+\cdots+\alpha_d=1,\ \alpha_k\geq0,\ k=1,\ldots,d\}
\end{equation}
or
\begin{equation}\label{eq:A-}
\underline{\alpha}\in A_-:=\bigcup_{k=1}^d \{(\alpha_1,\ldots,\alpha_d)\in\R^d\,|\,\alpha_1+\cdots+\alpha_d=1,\ \alpha_k>0,\ \alpha_\ell\leq0,\ \ell\neq k\}
\end{equation}
and not equal to one of the basis vectors $e_k$, $k=1,\ldots,d$. Also, one has to consider certain pointwise limits of $D_{\underline{\alpha}}$ in the $\underline{\alpha}$-parameter space. Roughly speaking, if $P$ attains a higher value than $Q$ in all these quantities, $P$ majorizes $Q$ in large samples. For illustration of the quantities in \eqref{eq:multivarD} and their pointwise limits in the case $d=3$, see Figure \ref{fig:Restricted}.

\begin{figure}
\begin{center}
\begin{overpic}[scale=0.45,unit=1mm]{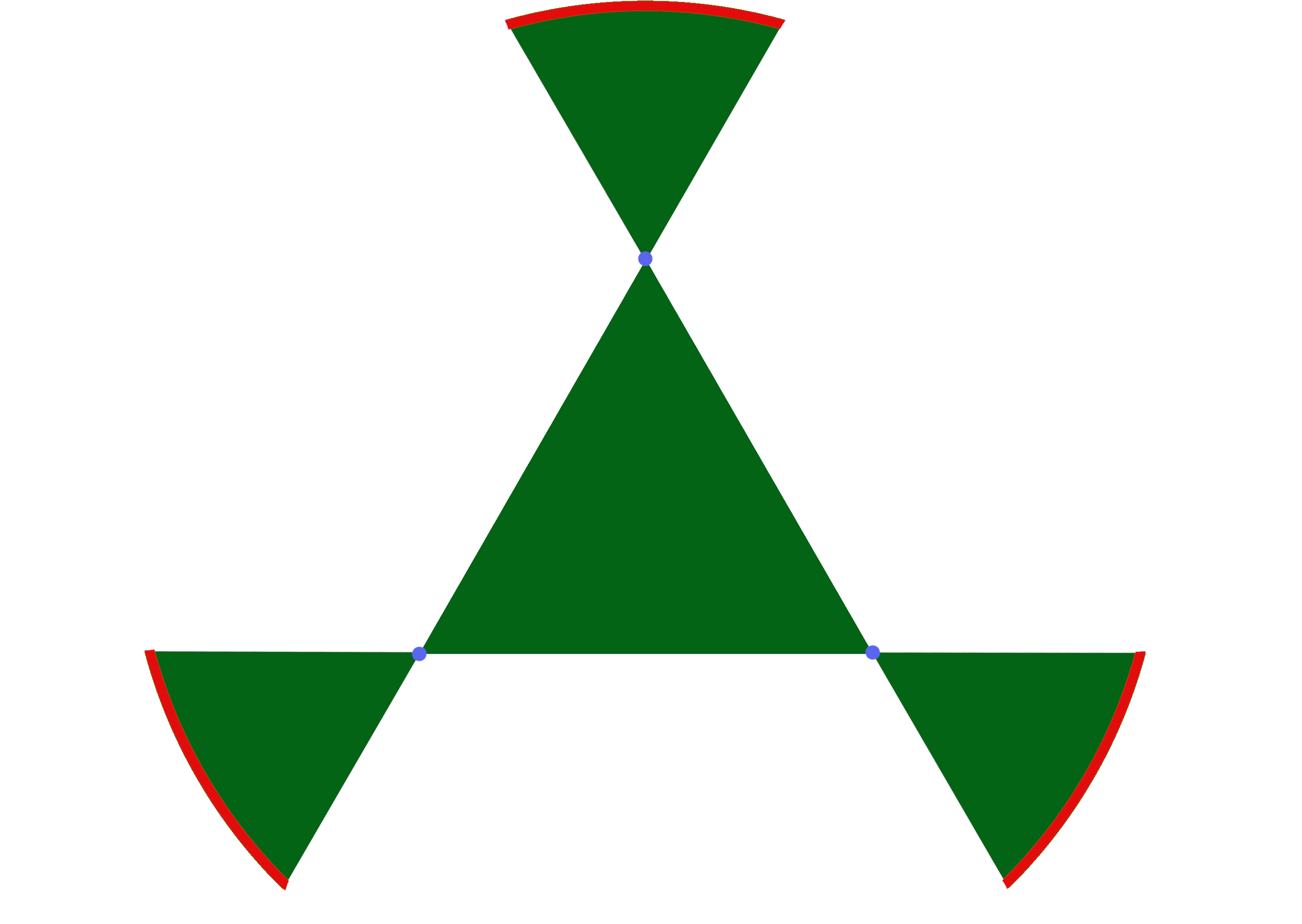}
\put(45,30){\begin{Huge}
${\color{white} A_+}$
\end{Huge}}
\put(17,12){\begin{Large}
${\color{white} A_-}$
\end{Large}}
\put(77,12){\begin{Large}
${\color{white} A_-}$
\end{Large}}
\put(47,62){\begin{Large}
${\color{white} A_-}$
\end{Large}}
\put(31,37){\begin{Large}
\darker{$D_{\underline{\alpha}}$}
\end{Large}}
\put(25,7){\begin{Large}
\darker{$D_{\underline{\alpha}}$}
\end{Large}}
\put(68,7){\begin{Large}
\darker{$D_{\underline{\alpha}}$}
\end{Large}}
\put(37,60){\begin{Large}
\darker{$D_{\underline{\alpha}}$}
\end{Large}}
\put(25,25){\begin{large}
\sininen{$\Delta^{(1)}_{\underline{\gamma}}$}
\end{large}}
\put(68,25){\begin{large}
\sininen{$\Delta^{(2)}_{\underline{\gamma}}$}
\end{large}}
\put(52,52){\begin{large}
\sininen{$\Delta^{(3)}_{\underline{\gamma}}$}
\end{large}}
\put(5,8){\begin{Large}
\punainen{$D^\T_{\underline{\beta}}$}
\end{Large}}
\put(86,8){\begin{Large}
\punainen{$D^\T_{\underline{\beta}}$}
\end{Large}}
\put(62,70){\begin{Large}
\punainen{$D^\T_{\underline{\beta}}$}
\end{Large}}
\end{overpic}
\caption{\label{fig:Restricted} The functions of \eqref{eq:multivarD} together with their pointwise limits depicted in the case $d=3$. The figure is contained in the affine plane of $\R^3$ of those points whose coordinates sum up to 1. The maps of \eqref{eq:multivarD} are associated with the areas $A_+$ and $A_-$ and are depicted in green. Note that, e.g.,\ in the lower-left part of $A_-$, we have $\alpha_1>1$ and $\alpha_2,\alpha_3\leq0$. The red arcs represent points at infinity and they correspond to the functions introduced later in \eqref{eq:DbetaTropic}. The blue points are $e_1=(1,0,0)$, $e_2=(0,1,0)$, and $e_3=(0,0,1)$. The pointwise limits of the maps in \eqref{eq:multivarD} as $\underline{\alpha}\to e_1$ are $\Delta^{(1)}_{\underline{\gamma}}(P):=\gamma_2 D_1\big(p^{(1)}\big\|p^{(2)}\big)+\gamma_3 D_1\big(p^{(1)}\big\|p^{(3)}\big)$ with $\gamma_2,\gamma_3\geq0$ which depend on the angle of approach. Similar limits exist for $\underline{\alpha}\to e_2$ or $e_3$. Here $D_1$ is the Kullback-Leibler divergence. Interestingly, as shown in \cite{Csiszar2003}, we have $\min_{p^{(1)}}\Delta^{(1)}_{\underline{\gamma}}(P)\propto D_{\frac{\gamma_2}{\gamma_2+\gamma_3}}(p^{(2)}\|p^{(3)})$. In \cite{mosonyi2024geometric}, generalizations to larger $d$ were derived and involve the quantities $D_{\underline{\alpha}}$ in \ref{eq:multivarD}. The situation depicted in this figure generalizes to the general $d\in\N$ case in a straightforward manner: the parameter area $A_+$ corresponds to the $d$-dimensional probability simplex and $A_-$ extends from the extreme points of the probability simplex as cones reaching to infinity.}
\end{center}
\end{figure}

Both in \cite{Mu_et_al_2020} and \cite{farooq2024} it was assumed that, within each statistical experiment, all the probability measures or vectors share the same support. In the finite case this means that, for an experiment $P=\big(p^{(1)},\ldots,p^{(d)}\big)$ consisting of probability vectors $p^{(k)}$, we have
\begin{equation}
{\rm supp}\,p^{(1)}=\cdots={\rm supp}\,p^{(d)}, 
\end{equation}
where ${\rm supp}\,p$ is set of entries where $p$ is non-zero (the support of $p$). In this work, we consider more general support conditions. We first study the case where there are no support conditions except for the baseline
\begin{equation}\label{eq:baselinecond}
\bigcap_{k=1}^d {\rm supp}\,p^{(k)}\neq\emptyset
\end{equation}
within each experiment $P=\big(p^{(1)},\ldots,p^{(d)}\big)$. We call this the \emph{minimal restrictions} case. Note that one could consider settings with even less restrictions by dropping the condition \eqref{eq:baselinecond} and requiring instead that, for instance, the supports of each pair of probability vectors $p^{(k)}$, $p^{(k^\prime)}$ must have a non-empty intersection. However, this would restrict the divergences that characterize large-sample and catalytic majorization to just the bivariate R\'enyi divergences between each pair of probability vectors. Since we want to study their multivariate generalizations of the type \eqref{eq:multivarD}, we impose slightly stronger conditions in order for these divergences to make an appearance. We will return to this issue in Remark \ref{rmk:minimalrestrictions}. 

The minimal restrictions case is analyzed in Section \ref{sec:findings}, which culminates in the main results Theorem \ref{thm:majorization} for exact large-sample or catalytic majorization, and Theorem \ref{thm:genasymptocatalytic} for the asymptotic case where we consider approximate large-sample or catalytic majorization with vanishing error. 

In the second case, we study the case where the support of the final column dominates the others, i.e.,\ in addition to \eqref{eq:baselinecond}, we require
\begin{equation}
{\rm supp}\,p^{(k)}\subseteq{\rm supp}\,p^{(d)}
\end{equation}
for $k=1,\ldots,d-1$. We call this the \emph{dominating column} case and discuss it in Section \ref{sec:Asymmetric}. The main results here are Theorem \ref{thm:majorizationdominatingcolumn} for exact large-sample or catalytic majorization, and Theorem \ref{thm:asymptocatalytic} for asymptotic large-sample or catalytic majorization for $d=2$. 

As mentioned, the conditions for large-sample or catalytic majorization of a matrix $P$ over $Q$ are expressed in terms of inequalities of divergences evaluated on $P$ and $Q$. To be precise, a divergence $D$ is defined to be any real-valued function on the matrices we consider, such that it is additive in the sense that 
\begin{equation}
    D\big(p^{(1)}\otimes q^{(1)},\ldots,p^{(d)}\otimes q^{(d)}\big)=D\big(p^{(1)},\ldots,p^{(d)}\big)+D\big(q^{(1)},\ldots,q^{(d)}\big), 
\end{equation}
and that it satisfies the data processing inequality, i.e. for every stochastic map $T$ we have 
\begin{equation}
    D\big(T\big(p^{(1)}\big),\ldots,T\big(p^{(d)}\big)\big)\leq D\big(p^{(1)},\ldots,p^{(d)}\big). 
\end{equation}
However, our results in the rest of the paper mainly give conditions for majorization in terms of inequalities of monotone homomorphisms and derivations, which we will define precisely later on. A monotone derivation is also a divergence, and a monotone homomorphism $\Phi$ defines the divergence $c\log\circ\,\Phi$, with $c\in\R$ appropriately chosen (see Section \ref{rem:rates} for one convenient choice). Hence, we may associate a family of divergences to a family of monotone homomorphisms and derivations, and conditions for majorization can be expressed using either set of functions. 

In both the minimal restrictions and dominating column cases mentioned above, matrix majorization in large samples is characterized by inequalities involving subsets of the functions $D_{\underline{\alpha}}$. In the first case, only those $D_{\underline{\alpha}}$ where $\underline{\alpha}\in A_+$ appear in the sufficient conditions for majorization in large samples. However, on the boundary of $A_+$ some additional quantities surface. Simply put, in each smaller facet of the simplex $A_+$, a set of additional quantities have to be considered. Note that the facets of $A_+$ are of the form
\begin{align}
A_C:=\{(\alpha_1,\ldots,\alpha_d)\in A_+\,|\,\alpha_k=0,\ k\in \{1,\ldots,d\}\setminus C\}
\end{align}
for some $C\subseteq\{1,\ldots,d\}$. With $C\subsetneq\{1,\ldots,d\}$, the new quantities arising from $A_C$ are essentially the original quantities when we restrict the vectors of the experiment $P=\big(p^{(1)},\ldots,p^{(d)}\big)$ onto $\bigcap_{k\in C}{\rm supp}\,p^{(k)}$. The occurrence of new quantities can be seen as a consequence of the non-commutativity of the limits $x\downarrow0$ and $\alpha\rightarrow0$: 
\begin{equation}
\lim_{x\downarrow0}x^\alpha=0\quad\text{for fixed }\alpha>0,\quad\quad\quad\lim_{\alpha\rightarrow0}x^\alpha=1\quad\text{for fixed }x>0.
\end{equation}
To be exact, the new quantities arise when ${\rm supp}\,\underline{\alpha}\subsetneq\{1,\ldots,d\}$, in which case we consider all ${\rm supp}\,\underline{\alpha}\subseteq C\subseteq\{1,\ldots,d\}$ and the new quantities are given by essentially the same formula as \eqref{eq:multivarD} with the exception that the $i$-sum runs over only the set $\bigcap_{c\in C}{\rm supp}\,p^{(c)}$. In the case of one dominating column, the set of relevant $\underline{\alpha}$ is extended as the one-parameter families
\begin{equation}
D_{\underline{\alpha}(\lambda)},\quad \underline{\alpha}(\lambda)=(0,\ldots,0,\underbrace{1+\lambda}_{k{\rm th\ entry}},0,\ldots,0,-\lambda),\qquad\lambda\geq0,\quad k=1,\ldots,d-1,
\end{equation}
including the points at infinity where $\lambda\to\infty$ also have to be considered.

This work complements earlier work particularly in \cite{farooq2024} by filling in the missing cases that could not be dealt with in \cite{farooq2024} because of the support assumptions imposed there. The simplest complete results we obtain deals with dichotomies. By dichotomy, we mean a pair $\big(p^{(1)},p^{(2)}\big)$ of probability vectors where the second dominates the first, i.e.,\ ${\rm supp}\,p^{(1)}\subseteq{\rm supp}\,p^{(2)}$. Given dichotomies $\big(p^{(1)},p^{(2)}\big)$ and $\big(q^{(1)},q^{(2)}\big)$ with the innocuous assumption that $p^{(1)}\neq p^{(2)}$, we have that $\big(p^{(1)},p^{(2)}\big)$ majorizes $\big(q^{(1)},q^{(2)}\big)$ in large samples approximately with vanishing error if and only if
\begin{equation}
\label{eq:dichotomiesconditionsintroduction}
D_\alpha\big(p^{(1)}\big\|p^{(2)}\big)\geq D_\alpha\big(q^{(1)}\big\|q^{(2)}\big),\qquad D_\alpha\big(p^{(2)}\big\|p^{(1)}\big)\geq D_\alpha\big(q^{(2)}\big\|q^{(1)}\big)
\end{equation}
for all $\alpha\geq 1/2$ where $D_\alpha(\cdot\|\cdot)$ are the R\'{e}nyi divergences (see \eqref{eq:RenyiDivergences} for a definition of the R\'enyi divergences). The same conditions \eqref{eq:dichotomiesconditionsintroduction} apply in the case of approximate catalytic majorization with vanishing error. See Corollary \ref{cor:asymptocatalytic} for an exact formulation of this result. We have similar results for general finite statistical experiments for $d\in\N$ (dichotomies being an example of the $d=2$ case) involving the multivariate generalizations $D_{\underline{\alpha}}$ of the R\'{e}nyi divergences in addition to the new quantities arising in the case where ${\rm supp}\,\underline{\alpha}\subsetneq\{1,\ldots,d\}$ as described earlier. 

The simplest case where both the multivariate divergences and these new quantities come into play is when we consider majorization of tuples of three probability vectors $P=(p^{(1)},p^{(2)},p^{(3)})$. In the following, we will illustrate the conditions for \emph{exact} large-sample or catalytic majorization of $P$ over another tuple $Q=(q^{(1)},q^{(2)},q^{(3)})$ in the setting of one dominating column, say the last, i.e. ${\rm supp}\,p^{(1)},{\rm supp}\,p^{(2)}\subseteq{\rm supp}\,p^{(3)}$. This is the case $d=3$ in Theorem \ref{thm:majorizationdominatingcolumn}. The majorization conditions can be formulated in terms of the divergences $D_{\underline{\alpha}}$ and the R\'enyi divergences $D_\alpha$. Figure \ref{fig:DominatingColumn} gives a depiction of the parameter vectors $\underline{\alpha}$ and parameters $\alpha$ that are applicable. The precise conditions for large-sample and catalytic majorization are: 
\begin{align}
    &D_{\underline{\alpha}}(P)>D_{\underline{\alpha}}(Q)&\forall\underline{\alpha}\in{\rm int}\,A_+,\label{eq:d3condition1}\\
    &D_\alpha\big(p^{(1)}\|p^{(3)}\big)>D_\alpha\big(q^{(1)}\|q^{(3)}\big)&\forall\alpha\in[0,\infty],\label{eq:d3condition2}\\
    &D_\alpha\big(p^{(1)}|_{{\rm supp}\,p^{(2)}}\|p^{(3)}|_{{\rm supp}\,p^{(2)}}\big)>D_\alpha\big(q^{(1)}|_{{\rm supp}\,q^{(2)}}\|q^{(3)}|_{{\rm supp}\,q^{(2)}}\big)&\forall\alpha\in[0,1),\label{eq:d3condition3}\\
    &D_\alpha\big(p^{(2)}\|p^{(3)}\big)>D_\alpha\big(q^{(2)}\|q^{(3)}\big)&\forall\alpha\in[0,\infty],\label{eq:d3condition4}\\
    &D_\alpha\big(p^{(2)}|_{{\rm supp}\,p^{(1)}}\|p^{(3)}|_{{\rm supp}\,p^{(1)}}\big)>D_\alpha\big(q^{(2)}|_{{\rm supp}\,q^{(1)}}\|q^{(3)}|_{{\rm supp}\,q^{(1)}}\big)&\forall\alpha\in[0,1),\label{eq:d3condition5}\\
    &D_\alpha\big(p^{(1)}\|p^{(2)}\big)>D_\alpha\big(q^{(1)}\|q^{(2)}\big),\quad D_\alpha\big(p^{(2)}\|p^{(1)}\big)>D_\alpha\big(q^{(2)}\|q^{(1)}\big)&\forall\alpha\in[0,1/2].\label{eq:d3condition6}
\end{align}

For the first condition \eqref{eq:d3condition1} we consider $D_{\underline{\alpha}}$ for all parameter vectors $\underline{\alpha}$ in ${\rm int}\,A_+$, the interior of $A_+$ (defined in \eqref{eq:A+}), i.e. all $\underline{\alpha}=(\alpha_1,\alpha_2,\alpha_3)\in\R^3$ such that $\alpha_1+\alpha_2+\alpha_3=1$ and $0<\alpha_k<1$ for $k=1,2,3$. Condition \eqref{eq:d3condition2} is in terms of the R\'enyi divergence for $\alpha\in[0,\infty]$ between the first and third vectors. This coincides with the half-line parameterized by $(\alpha,0,1-\alpha)$ in Figure \ref{fig:DominatingColumn} starting from the top vertex of the triangle (corresponding to $\alpha=0$) and passing through the lower left vertex (corresponding to $\alpha=1$, i.e. the Kullback-Leibler divergence). In the limit $\alpha\rightarrow\infty$ this gives the max-divergence $D_\infty$. In condition \eqref{eq:d3condition3}, we denote by $r|_{{\rm supp}\,s}$ the restriction of the vector $r$ to the support of $s$, i.e. $\big(r|_{{\rm supp}\,s}\big)_i=r_i$ if $i\in{\rm supp}\,s$ and $r|_{{\rm supp}\,s}=0$ otherwise. These divergences can be seen as point-wise limits of $D_{(\alpha_1,\alpha_2,\alpha_3)}$ when $\alpha_2\rightarrow0$, similarly as the R\'enyi divergence $D_0$ is the point-wise limit of $D_\alpha$ when $\alpha\rightarrow0$. Conditions \eqref{eq:d3condition4} and \eqref{eq:d3condition5} are similar to the preceding two, and correspond to the other half-line in Figure \ref{fig:DominatingColumn}. Finally, \eqref{eq:d3condition6} corresponds to the lower line segment of the triangle parameterized by $(\alpha,1-\alpha,0)$ for $\alpha\in[0,1]$. 

\begin{figure}
\vspace{5mm}
\begin{center}
\begin{overpic}[scale=0.4,unit=1mm]{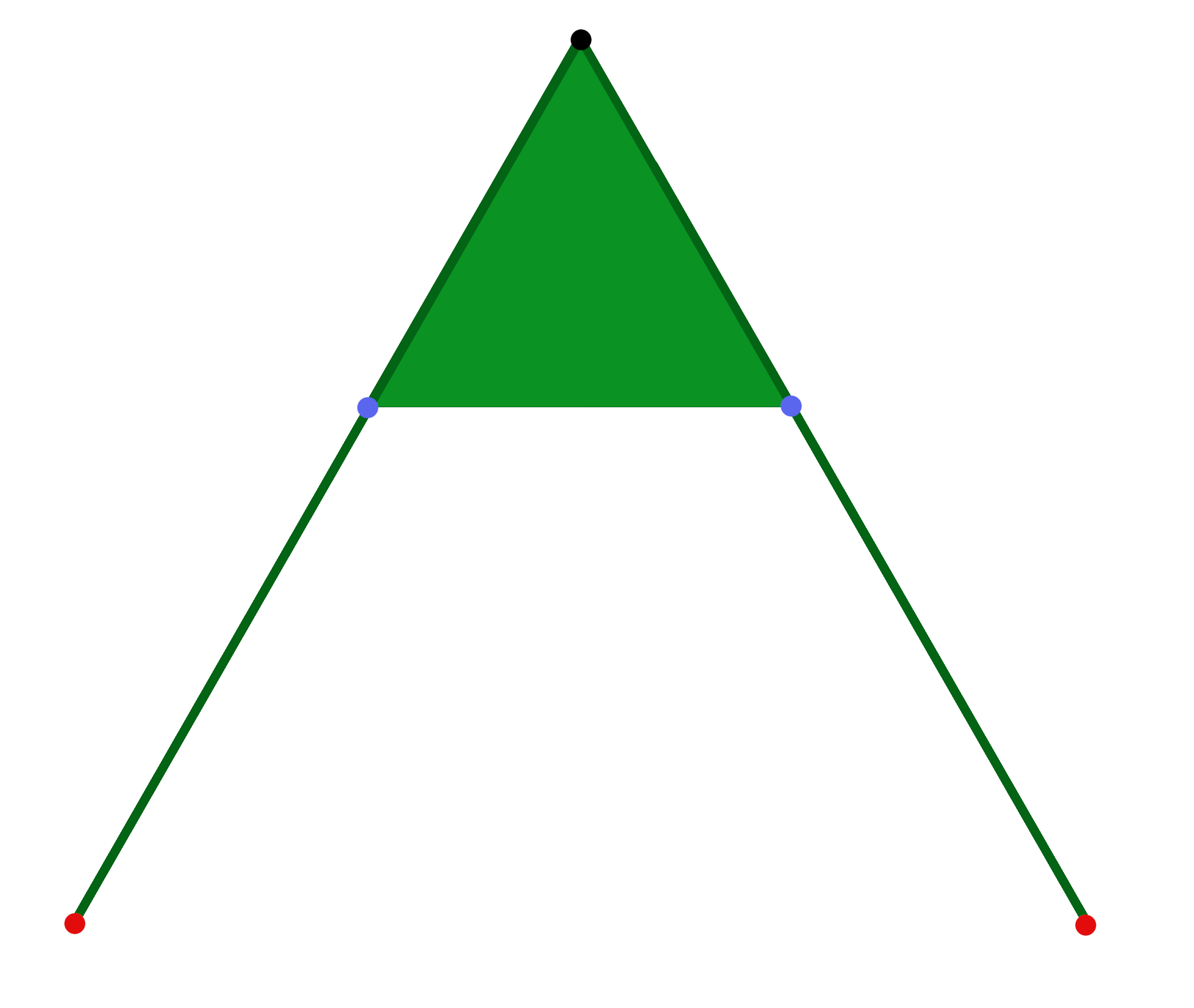}
\put(42,60){\begin{Huge}
$A_+$
\end{Huge}}
\put(-3,86){
\lighter{$D_0\big(p^{(1)}|_{{\rm supp}\,p^{(2)}}\|p^{(3)}|_{{\rm supp}\,p^{(2)}}\big),$} \darker{$D_0\big(p^{(2)}\big\|p^{(3)}\big)$, $D_0\big(p^{(1)}\big\|p^{(3)}\big)$}
}
\put(29,67){
\textbf{(a1)}
}
\put(59,67){
\textbf{(b1)}
}
\put(8,30){
\textbf{(a2)}
}
\put(80,30){
\textbf{(b2)}
}
\put(2,53){
\sininen{$D_1\big(p^{(1)}\big\|p^{(3)}\big)$}
}
\put(2,47){
\darker{$D_0\big(p^{(2)}\big\|p^{(1)}\big)$}
}
\put(70,53){
\sininen{$D_1\big(p^{(2)}\big\|p^{(3)}\big)$}
}
\put(70,47){
\darker{$D_0\big(p^{(1)}\big\|p^{(2)}\big)$}
}
\put(-12,1){
\punainen{$D_\infty\big(p^{(1)}\big\|p^{(3)}\big)$}
}
\put(78,1){
\punainen{$D_\infty\big(p^{(2)}\big\|p^{(3)}\big)$}
}
\put(0,67){
\lighter{$D_{\underline{\alpha},\{1,2,3\}}(P)$}
}
\thicklines
\put(24,68){\vector(1,-0.7){17}}
\put(35,46){
\textbf{(c1)}
}
\put(53,46){
\textbf{(c2)}
}
\end{overpic}
\caption{\label{fig:DominatingColumn} A subset of the set of parameter vectors that are depicted in Figure \ref{fig:Restricted}. These represent the family of divergences that appear in the conditions for exact large-sample and catalytic majorization for matrices $P=(p^{(1)},p^{(2)},p^{(3)})$ where the support of the last column dominates the support of the other two. The interior of the triangle $A_+$ corresponds to the trivariate divergences $D_{\underline{\alpha}}(P)$ given by \eqref{eq:multivarD}. The two half lines beginning at $e_3$, going through $e_1$, respectively $e_2$, and extending to infinity represent the R\'enyi divergences $D_\alpha\big(p^{(1)}\|p^{(3)}\big)$ (\textbf{(a1), (a2)}), respectively $D_\alpha\big(p^{(2)}\|p^{(3)}\big)$ (\textbf{(b1), (b2)}), for $\alpha\in[0,\infty]$. The side of the triangle between $e_1$ and $e_2$ is split in half at $\underline{\alpha}=(1/2,1/2,0)$: the left half \textbf{(c1)} represents $D_\alpha\big(p^{(2)}\|p^{(1)}\big)$ for $\alpha\in[0,1/2]$ and the right half \textbf{(c2)} represents $D_\alpha\big(p^{(1)}\|p^{(2}\big)$ for $\alpha\in[0,1/2]$. Additionally, on the two sides of the triangle between $e_3$ and $e_1$, respectively $e_2$, we have the R\'enyi divergences between two columns when restricted to the support of the remaining column: $D_\alpha\big(p^{(1)}|_{{\rm supp}\,p^{(2)}}\|p^{(3)}|_{{\rm supp}\,p^{(2)}}\big)$ \textbf{(a1)}, respectively $D_\alpha\big(p^{(2)}|_{{\rm supp}\,p^{(1)}}\|p^{(3)}|_{{\rm supp}\,p^{(1)}}\big)$ \textbf{(b1)}. 
}
\end{center}
\end{figure}

At first glance, Theorems \ref{thm:majorization} and \ref{thm:majorizationdominatingcolumn} for the minimal restrictions and dominating column case, respectively, are very similar and the former seems to apply to a larger set of matrices than the latter. On top of that, the former theorem requires less inequalities to be satisfied. Therefore, one may wonder why we would need the latter theorem at all. The important thing to note here is that the strict inequalities in the conditions that the matrices $P$ and $Q$ have to satisfy force $P$ to be of a very specific form, it has to be \emph{power universal} (more on this later). In the case of Theorem \ref{thm:majorization}, these conditions require the support of $P=(p^{(1)},\ldots,p^{(d)})$ to satisfy the following: 
\begin{equation}
    {\rm supp}\,p^{(k)}\nsubseteq{\rm supp}\,p^{(k^\prime)}\text{ for all }k,k^\prime\in\{1,\ldots,d\}\text{ with }k\neq k^\prime. 
\end{equation}
However, for Theorem \ref{thm:majorizationdominatingcolumn}, $P$ has to satisfy 
\begin{equation}
    {\rm supp}\,p^{(k)}\subsetneq{\rm supp}\,p^{(d)}\text{ for all }k\in\{1,\ldots,k-1\}, 
\end{equation}
and 
\begin{equation}
    {\rm supp}\,p^{(k)}\nsubseteq{\rm supp}\,p^{(k^\prime)}\text{ for all }k,k^\prime\in\{1,\ldots,k-1\}\text{ with }k\neq k^\prime. 
\end{equation}
Therefore, both theorems apply to two disjunct types of matrices, depending on their support structure. In principle, one could derive similar theorems for matrices with different kinds of support structures, where the family of divergences that appear in the conditions of the theorem are different for each type of support structure. We will discuss this point in more detail in the Conclusion and outlook section. To understand why the type of matrices in Theorem \ref{thm:majorization} require less inequalities to be satisfied than those in Theorem \ref{thm:majorizationdominatingcolumn}, one can consider the catalytic case: both theorems guarantee the existence of a catalyst that is taken from the same set of matrices that $P$ and $Q$ are from. Hence, when this set is larger (which it is for Theorem \ref{thm:majorization} compared to Theorem \ref{thm:majorizationdominatingcolumn}), it intuitively makes sense that we require less conditions on $P$ and $Q$. 


Theorems \ref{thm:majorization} and \ref{thm:majorizationdominatingcolumn} also give an expression for the maximal achievable rate at which a tuple of probability distributions $P=(p^{(1)},\ldots,p^{(d)})$ can be transformed into another tuple $Q=(q^{(1)},\ldots,q^{(d)})$ using the same stochastic map for each distribution. Namely, denoting by $\mathcal{D}$ set of divergences associated with the monotone homomorphisms and derivations appearing in the conditions for large-sample and catalytic majorization, the maximal achievable rate can be expressed as 
\begin{equation}
r(P\to Q)=\min_{D\in\mathcal{D}}\,\frac{D(P)}{D(Q)}. 
\end{equation}
The minimum above exists since $\mathcal{D}$ is indeed compact. The result above follows directly from our large-sample results. More details and a full derivation are given in Section \ref{rem:rates}. 

One immediate application of the results obtained in this work is in the field of quantum thermodynamics. In thermodynamical equilibrium, the system is in a stable or thermal state $\gamma_\beta$ which is typically the Gibbs state of inverse temperature $\beta$ (or a KMS-state in general). A quantum channel $\Phi$ (a completely positive trace-preserving affine map on the states) is {\it Gibbs-preserving} if $\Phi(\gamma_\beta)=\gamma_\beta$. We denote $\rho\rgeq_\beta\sigma$ for quantum states $\rho$ and $\sigma$ if there is a Gibbs-preserving channel $\Phi$ such that $\Phi(\rho)=\sigma$. When we require the states $\rho$ and $\sigma$ to be diagonalizable in the eigenbasis of the thermal state $\gamma_\beta$, $\rho\rgeq_\beta\sigma$ means that
\begin{equation}
\big(\lambda^\rho,\lambda^\beta\big)\rgeq\big(\lambda^\sigma,\lambda^\beta\big), 
\end{equation}
where $\lambda^\rho$ and $\lambda^\sigma$ are the vectors of eigenvalues of $\rho$ and $\sigma$ where the eigenvalues appear in the order fixed by the thermal eigenbasis and $\lambda^\beta:=\lambda^{\gamma_\beta}$. This thermal majorization was characterized in the asymptotic catalytic regime in \cite{Brandao2014} using the results on catalytic vector majorization derived by Klimesh in \cite{klimesh2007inequalities}. To apply the vector majorization results, particular embedding methods were used in \cite{Brandao2014} approximating the thermal spectrum $\lambda^\beta$. These approximations introduce some convergence issues which were not completely taken into account in \cite{Brandao2014}, as was also pointed out in \cite{GourTomamichel2021}. Our method bypasses these issues and finally resolves the problem of catalytic thermal majorization also in the case when the states are not of full support. We may also now investigate multiple thermal majorization $(\rho_1,\ldots\rho_m)\rgeq_\beta(\sigma_1,\ldots,\sigma_m)$ i.e.,\ the existence of a Gibbs-preserving channel $\Phi$ such that $\Phi(\rho_i)=\sigma_i$ for $i=1,\ldots,m$.

Our work not only characterizes large-sample and catalytic majorization of finite statistical experiments. In addition to the thermodynamic applications described above, our results also serve as a proof of concept for semiring methods and as a stepping stone to further applications. Similar methods have already been applied, e.g.,\ in comparison of quantum statistical experiments \cite{bunth2021asymptotic,perry2022semiring,bunth2023,bunth2024}. Moreover,  multivariate divergences similar to the ones we study find an operational interpretation in the task of (quantum) hypothesis exclusion, where they appear in the error exponents \cite{Mishra2024}. The methodology of our current work can also be applied to quantum-information-theoretic problems, including quantum majorization questions and resource theory. Just like in the classical case, (multivariate) divergences on quantum states seem to characterize large-sample quantum majorization and rates of transforming i.i.d.\ copies quantum states with limited resources. However, these applications remain a target of future research.

Our results are based on earlier work done in \cite{farooq2024} and on the theory developed in \cite{fritz2023,fritz2022}. The methods used are real algebraic in nature. We build different preordered semirings based on matrix majorization and use the Vergleichsstellens\"{a}tze developed in \cite{fritz2023,fritz2022} which associate ordering in large samples and in the catalytic setting with inequalities involving certain monotone homomorphisms of the semiring. Recall that a semiring is an algebraic structure with an addition and multiplication, but the elements of the semiring are not required to have inverses with respect to either the multiplication or even the addition. Preordered semirings additionally possess a preorder (a reflexive and transitive binary relation) which is required to fit with the algebraic structure of the semiring. 
Volker Strassen famously introduced the first subcubic algorithm for matrix multiplication, a stepping stone into his theory of asymptotic spectra \cite{strassen1986,strassen1987,strassen1988,strassen1991}. These real-algebraic methods have been applied in various fields of probability and information theory and computer science. The main results, {\it Vergleichsstellens\"{a}tze}, of \cite{fritz2023,fritz2022} applied in this work can also be seen as extensions of Strassen's {\it Positivstellensatz}. Another recent extension of Strassen's work has been derived in \cite{Vrana2022}. For an overview of these methods and their varied applications we also refer to \cite{WigZuid}. 

This paper is organized as follows. In Section \ref{sec:background}, we set up basic notation and definitions, review previous results in majorization in its various forms and introduce the theoretical results of \cite{fritz2023} that we will be utilizing in this work. We derive the preordered semiring for matrix majorization with minimal support conditions, its monotone homomorphisms and derivations relevant for large-sample and catalytic majorization, in Section \ref{sec:findings}. We partially build on earlier work in \cite{farooq2024} but see that lifting the assumption of mutually supporting columns in each matrix requires a dramatically different approach. In Section \ref{sec:Asymmetric}, we study the case where the final column supports the others and in subsections \ref{sec:2column} and \ref{sec:thermal} concentrate on the two-column case whose results we apply to the problem of thermal majorization briefly presented above.

\section{Background}\label{sec:background}
	
\subsection{Basic definitions}
	
We follow the convention $\N=\{1,2,\ldots\}$. When $d\in\N$, we define $[d]:=\{1,\ldots,d\}$. For any $n\in\N$, we define the 1-norm in $\R^n$ as
\begin{equation}
\|(x_1,\ldots,x_n)\|_1=|x_1|+\cdots+|x_n|.
\end{equation}
Instead of full $\R$, we usually concentrate on the half-line $\R_+:=[0,\infty)$ or $\R_{>0}:=(0,\infty)$.
	
For any $n\in\N$, we say that $p=(p_1,\ldots,p_n)\in\R_+^n$ is a probability vector when $p_1+\cdots+p_n=1$, i.e.,\ $\|p\|_1=1$. Let us denote the set of all $n$-component probability vectors by $\mc P_n$. This set is convex, i.e.,\ when $p,q\in\mc P_n$ and $t\in[0,1]$, then $tp+(1-t)q\in\mc P_n$. Fix $m,n\in\N$. We say that $T:\mc P_m\to\mc P_n$ is a stochastic map if it is affine, i.e.,\ $T\big(tp+(1-t)q\big)=tTp+(1-t)Tq$ for all $p,q\in\mc P_m$ and $t\in[0,1]$. Equivalently, we may view $T$ as a linear map $T:\R^m\to\R^n$ such that $T\R_+^m\subseteq \R_+^n$ and $\|Tx\|_1=\|x\|_1$ for all $x\in\R^m$. This means that we may view $T$ as an $(n\times m)$-matrix $T=(T_{i,j})_{i,j}$ with entries in $\R_+$ such that $T_{1,j}+\cdots+T_{n,j}=1$ for all $j=1,\ldots,m$. We say that $T$ is bistochastic if also $T_{i,1}+\cdots+T_{i,m}=1$ for all $i=1,\ldots,n$.
	
\subsection{Majorization}
	
Majorization is a word commonly used when an object (often a probability measure or vector or a collection of them) can be reached from another object using a map of a relevant type (often a stochastic map). The arguably simplest notion of majorization deals with finite probability vectors. This is why we call this type of majorization {\it vector majorization}. In this mode of majorization, we say that a probability vector $p=(p_1,\ldots,p_n)$ majorizes another probability vector $q=(q_1,\ldots,q_n)$ if
\begin{equation}
\sum_{i=1}^k p_i^\downarrow\geq\sum_{i=1}^k q_i^\downarrow,\qquad k=1,\ldots,n-1,
\end{equation}
where $p^\downarrow=(p_1^\downarrow,\ldots,p_n^\downarrow)$ and $q^\downarrow=(q_1^\downarrow,\ldots,q_n^\downarrow)$ are $p$ and $q$ with their entries ordered in a non-increasing way. We write $p\rgeq q$ in this situation. This is equivalent with the existence of a bistochastic map $T$ such that $Tp=q$. A stochastic map $T$ is also bistochastic if and only if for the uniform probability vector $u=(1/n,\ldots,1/n)$, we have $Tu=u$. We subsequently generalize this notion of majorization in a straightforward manner.
	
\begin{definition}\label{def:matrixmaj}
Fix $d\in\N$ and consider collections $P=\left(p^{(1)},\ldots,p^{(d)}\right)$ and $Q=\left(q^{(1)},\ldots,q^{(d)}\right)$ of probability vectors. We say that $P$ majorizes $Q$ and denote $P\rgeq Q$ if there is a stochastic map $T$ such that $Tp^{(k)}=q^{(k)}$ for $k=1,\ldots,d$. We call this form of majorization {\it matrix majorization}.
\end{definition}
	
We immediately notice that the preceding vector majorization is an example of this matrix majorization by noting that a probability vector $p=(p_1,\ldots,p_n)$ majorizes another probability vector $q=(q_1,\ldots,q_n)$ if $(p,u)\rgeq(q,u)$ where $u=(1/n,\ldots,1/n)$. Just like matrix majorization, also vector majorization can be studied in the large-sample and catalytic regimes.

Also the special case $d=2$ of matrix majorization is of particular interest. This is called {\it relative majorization}. Relative majorization is often restricted to the case of {\it dichotomies}, i.e.,\ pairs $(p,q)$ of probability measures or vectors where $q$ dominates $p$ (${\rm supp}\,p\subseteq{\rm supp}\,q$), or, symbolically, $p\ll q$.
	
\subsection{Earlier results}

The field of vector majorization is probably the best known mode of majorization. In \cite{klimesh2007inequalities}, necessary and sufficient conditions for catalytic vector majorization were derived. These conditions can be written using the {\it R\'{e}nyi divergences} $D_\alpha(\cdot\|\cdot)$, $0\leq\alpha\leq\infty$, defined on any pair $(p,q)$ of probability measures with supports ${\rm supp}\,p=:I$ and ${\rm supp}\,q=:J$,
\begin{equation}\label{eq:RenyiDivergences}
D_\alpha(p\|q)=\left\{\begin{array}{ll}
-\log{\sum_{i\in I\cap J}q_i}&\text{if}\ \alpha=0,\ {\rm and}\ I\cap J\neq\emptyset\\
\frac{1}{\alpha-1}\log{\sum_{i\in I\cap J}p_i^\alpha q_i^{1-\alpha}}&\text{if}\ \alpha\in(0,1),\ {\rm and}\ I\cap J\neq\emptyset\\
\sum_{i\in I}p_i\log{\frac{p_i}{q_i}}&\text{if}\ \alpha=1,\ {\rm and}\ I\subseteq J\\
\frac{1}{\alpha-1}\log{\sum_{i\in I}p_i^\alpha q_i^{1-\alpha}}&\text{if}\ \alpha\in(1,\infty),\ {\rm and}\ I\subseteq J\\
\max_{i\in I}\log{\frac{p_i}{q_i}}&\text{if}\ \alpha=\infty\ {\rm and}\ I\subseteq J\\
\infty&{\rm otherwise}
\end{array}\right.,
\end{equation}
in the following way: For probability vectors $p$ and $q$ such that $p\neq q$, there is a catalytic probability vector $r$ such that $p\otimes r\rgeq q\otimes r$ if and only if, denoting the uniform distribution on ${\rm supp}\,p\cup{\rm supp}\,q$ by $u$,
\begin{equation}\label{eq:KlimeshCond}
D_\alpha(p\|u)>D_\alpha(q\|u),\quad D_\alpha(u\|p)>D_\alpha(u\|q),\qquad\forall \alpha\in[1/2,\infty).
\end{equation}

Conditions for vector majorization in large samples were derived in \cite{Jensen2019} which were further extended in \cite{farooq2024}. The latter extended result can be stated as follows: given probability vectors $p$ and $q$, there is $n\in\N$ such that $p^{\otimes n}\rgeq q^{\otimes n}$ if conditions \eqref{eq:KlimeshCond} hold including the case $\alpha=\infty$.

In \cite{AubrunNechita2007}, asymptotic large-sample and catalytic vector majorization were studied. It was found that, given again probability vectors $p$ and $q$, the following conditions are equivalent: 
\begin{itemize}
\item[(i)] For all $\varepsilon>0$, there is $n_\varepsilon\in\N$ and a probability vector $q_\varepsilon$ such that $\|q-q_\varepsilon\|_1\leq\varepsilon$ and $p^{\otimes n}\rgeq q_\varepsilon^{\otimes n}$ for all $n\geq n_\varepsilon$.
\item[(ii)] For all $\varepsilon>0$, there are probability vectors $q_\varepsilon$ and $r_\varepsilon$ such that $\|q-q_\varepsilon\|_1\leq\varepsilon$ and $p\otimes r_\varepsilon\rgeq q_\varepsilon\otimes r_\varepsilon$.
\item[(iii)] $\|p\|_\alpha\geq \|q\|_\alpha$ for all $\alpha\geq1$, where
\begin{equation}
\|(p_1,\ldots,p_n)\|_\alpha:=\left(p_1^\alpha+\cdots+p_n^\alpha\right)^{1/\alpha}.
\end{equation}
\end{itemize}
Also this result was derived using real-algebraic methods in \cite{farooq2024}.

The problem of relative majorization in large samples was studied in \cite{Mu_et_al_2020}. This work studied dichotomies $(p,q)$ of probability measures over a standard Borel space $(X,\mc A)$ where $p\sim q$, i.e.,\ $p\ll q$ and $q\ll p$. In this case, the relevant R\'{e}nyi divergences $D_\alpha$, $\alpha\in[1/2,\infty]$ are given by
\begin{equation}
D_\alpha(p\|q)=\left\{\begin{array}{ll}
\frac{1}{\alpha-1}\int_X \frac{dp}{dq}(x)^\alpha\,dq(x)&\text{if}\ \alpha\in[1/2,\infty)\setminus\{1\}\\
\int_X \log{\frac{dp}{dq}(x)}\,dq(x)&\text{if}\ \alpha=1\\
q-\esssup_{x\in X}\log{\frac{dp}{dq}(x)}&\text{if}\ \alpha=\infty
\end{array}\right..
\end{equation}
It was found in \cite{Mu_et_al_2020} that, a dichotomy $\big(p^{(1)},p^{(2)}\big)$, where $p^{(1)}\sim p^{(2)}$, majorizes $\big(q^{(1)},q^{(2)}\big)$, where  and $q^{(1)}\sim q^{(2)}$, if
\begin{equation}
D_\alpha\big(p^{(1)}\big\|p^{(2)}\big)>D_\alpha\big(q^{(1)}\big\|q^{(2)}\big),\quad D_\alpha\big(p^{(2)}\big\|p^{(1)}\big)>D_\alpha\big(q^{(2)}\big\|q^{(1)}\big),\qquad\forall\alpha\in[1/2,\infty].
\end{equation}

Large-sample and catalytic matrix majorization in the case of matrices (finite experiments) $P=\big(p^{(1)},\ldots,p^{(d)}\big)$ where the supports ${\rm supp}\,p^{(k)}$, $k=1,\ldots,d$, coincide was characterized in \cite{farooq2024}. For these results, let us define $A_+$ and $A_-$ as in \eqref{eq:A+} and \eqref{eq:A-}, and
\begin{align}
&B:=\{(\beta_1,\ldots,\beta_d)\in\R^d\,|\,\beta_1+\cdots+\beta_d=0\},\label{eq:B}\\ 
&B_-:=\bigcup_{k=1}^d\{(\beta_1,\ldots,\beta_d)\in B\,|\,\beta_k>0,\ \beta_\ell\leq 0,\ \ell\neq k\}.\label{eq:B-}
\end{align}
We also define the natural basis vectors $e_k:=(\delta_{\ell,k})_{\ell=1}^d$. We define the maps $D_{\underline{\alpha}}$ and $D_{\underline{\beta}}$ through
\begin{align}
D_{\underline{\alpha}}(P)&=\frac{1}{\max_{1\leq k\leq d}\alpha_k-1}\log{\sum_{i=1}^n \prod_{k=1}^d \big(p^{(k)}_i\big)^{\alpha_k}},\label{eq:DalphaTemperate}\\
D_{\underline{\beta}}^\T(P)&=\frac{1}{\max_{1\leq k\leq d}\beta_k}\log{\max_{1\leq i\leq n}\prod_{i=1}^n \big(p^{(k)}_i\big)^{\beta_k}}\label{eq:DbetaTropic},
\end{align}
for all for $P=\big(p^{(1)},\ldots,p^{(d)}\big)$ where $p^{(k)}=\big(p^{(k)}_1,\ldots,p^{(k)}_n\big)$, $k=1,\ldots,d$. The areas $A_+$ and $A_-$ together with the functions $D_{\underline{\alpha}}$ and $D^\T_{\underline{\beta}}$ are depicted in Figure \ref{fig:Restricted}. It follows that, if
\begin{equation}
\begin{array}{rcll}
D_{\underline{\alpha}}(P)&>&D_{\underline{\alpha}}(Q),&\forall\underline{\alpha}\in A_+\cup A_-\setminus\{e_1,\ldots,e_d\},\\
D_{\underline{\beta}}^\T(P)&>&D_{\underline{\beta}}^\T(Q),&\forall\underline{\beta}\in B_-,\\
D_1\big(p^{(k)}\big\|p^{(\ell)}\big)&>&D_1\big(q^{(k)}\big\|q^{(\ell)}\big),&\forall k,\ell\in\{1,\ldots,d\},\ k\neq\ell,
\end{array}
\end{equation}
then $P=\big(p^{(1)},\ldots,p^{(d)}\big)$ majorizes $Q=\big(q^{(1)},\ldots,q^{(d)}\big)$ in large samples. We also recover the earlier result of \cite{Mu_et_al_2020} for dichotomies of probability vectors as a special case. Also asymptotic large-sample and catalytic majorization can be characterized with similar non-strict inequalities involving $D_{\underline{\alpha}}$ with $\underline{\alpha}\in A_+\cup A_-\setminus\{e_1,\ldots,e_d\}$.

The above result crucially assumes that the columns of each matrix share a common support. In this work, we remove this condition and open the possibility for different support restrictions. This requires modifying the methodology used in \cite{farooq2024}. For this, we first have to introduce the real-algebraic machinery required for the Vergleichsstellens\"{a}tze of \cite{fritz2023} that give the sufficient and almost necessary conditions for large-sample and catalytic matrix majorization.

\subsection{Preordered semirings and a Vergleichsstellensatz}
	
We only study commutative algebraic structures in this work. We say that a tuple $(S,+,\cdot,0,1,\rleq)$ is a {\it preordered semiring} if $(S,+,0)$ and $(S,1,\cdot)$ are commutative semigroups where the multiplication distributes over the addition and $\rleq$ is a preorder (a binary relation which is reflexive and transitive) satisfying
\begin{equation}
x\rleq y\ \Rightarrow\ \left\{\begin{array}{rcl}
x+a&\rleq&y+a,\\
xa&\rleq&ya,
\end{array}\right.
\end{equation}
for all $a\in S$. As above, we usually omit the multiplication dot between elements in the semiring. We denote by $\sim$ the equivalence relation generated by $\rleq$, i.e. $x\sim y$ if and only if there are $z_1,\ldots,z_n\in S$ such that
\begin{equation}
x\rleq z_1\rgeq z_2\rleq\cdots\rgeq z_n\rleq y.
\end{equation}
The preordered semiring $S$ is of {\it polynomial growth} if it has a {\it power universal} $u\in S$, i.e.
\begin{equation}
x\rleq y\quad\Rightarrow\quad\exists k\in\N:\ y\rleq xu^k.
\end{equation}
A preordered semiring $S$ is a {\it preordered semidomain} if
\begin{equation}
\begin{array}{rcl}
xy=0&\Rightarrow&x=0\ {\rm or}\ y=0,\\
0\rleq x\rleq 0&\Rightarrow&x=0.
\end{array}
\end{equation}
Moreover, $S$ is {\it zerosumfree} if $x+y=0$ implies $x=0=y$.
	
Given preordered semirings $S$ and $T$, we say that a map $\Phi:S\to T$ is a {\it monotone homomorphism} if
\begin{itemize}
\item $\Phi(x+y)=\Phi(x)+\Phi(y)$ for all $x,y\in S$ (additivity),
\item $\Phi(xy)=\Phi(x)\Phi(y)$ for all $x,y\in S$ (multiplicativity),
\item $x\rleq y$ $\Rightarrow$ $\Phi(x)\rleq\Phi(y)$ (monotonicity), and
\item $\Phi(0)=0$ and $\Phi(1)=1$.
\end{itemize}
We say that such a monotone homomorphism is {\it degenerate} if $x\rleq y$ implies $\Phi(x)=\Phi(y)$. Otherwise $\Phi$ is {\it nondegenerate}. For our results we need monotone homomorphisms with values in special semirings. These are the following:
\begin{itemize}
\item $\R_+$: The half-line $[0,\infty)$ equipped with the natural addition, multiplication, and total order.
\item $\R_+^{\rm op}$: The same as above but with the reversed order. Together $\R_+$ and $\R_+^{\rm op}$ are often called {\it temperate reals}.
\item $\T\R_+$: The half-line $[0,\infty)$ equipped with the natural multiplication, total order, and the tropical sum $x+y=\max\{x,y\}$.
\item $\T\R_+^{\rm op}$: The same as above but with the reversed order. Together $\T\R_+$ and $\T\R_+^{\rm op}$ are often called {\it tropical reals}.
\end{itemize}
Suppose that $S$ is a preordered semiring and that $\Phi:S\to\R_+$ is a monotone homomorphism. We say that an additive map $\Delta:S\to\R$ is a {\it derivation at $\Phi$} or a {\it $\Phi$-derivation} if it satisfies the {\it Leibniz rule}
\begin{equation}
\Delta(xy)=\Delta(x)\Phi(y)+\Phi(x)\Delta(y)
\end{equation}
for all $x,y\in S$. We are mainly only interested in derivations at degenerate homomorphisms that are also monotone, i.e. satisfy 
\begin{equation}
    x\rleq y\quad\Longrightarrow\quad\Delta(x)\leq\Delta(y). 
\end{equation}

Not all derivations are relevant though, because of the following results. Given a degenerate homomorphism $\Phi:S\to\R_+$, we only need to consider derivations at $\Phi$ modulo interchangeability: we say that derivations $\Delta,\Delta':S\to\R$ at $\Phi$ are {\it interchangeable} if there exists a derivation $\delta:\R_+\to\R$ at the identity function, i.e.,\ $\delta(x+y)=\delta(x)+\delta(y)$ and $\delta(xy)=\delta(x)y+x\delta(y)$ for all $x,y\in\R$, such that
\begin{equation}\label{eq:interchangeable}
\Delta'(x)=\Delta(x)+\delta\big(\Phi(x)\big)
\end{equation}
for all $x\in S$. On the other hand, assume that $\Delta:S\to\R$ is  monotone derivation at a degenerate homomorphism $\Phi:S\to\R_+$ and that $\delta:\R_+\to\R$ is a derivation. Then \eqref{eq:interchangeable} defines a monotone derivation $\Delta':S\to\R$ at $\Phi$. Additivity is shown through a quick calculation. For monotonicity, consider $x,y\in S$, $x\rgeq y$:
\begin{align}
\Delta'(y)&=\underbrace{\Delta(y)}_{\leq\Delta(x)}+\delta\big(\underbrace{\Phi(y)}_{=\Phi(x)}\big)\leq \Delta(x)+\delta\big(\Phi(x)\big)=\Delta'(x).
\end{align}
For the Leibniz property, consider $x,y\in S$:
\begin{align}
\Delta'(xy)&=\Delta(xy)+\delta\big(\underbrace{\Phi(xy)}_{=\Phi(x)\Phi(y)}\big)=\Delta(x)\Phi(y)+\Phi(x)\Delta(y)+\delta\big(\Phi(x)\big)\Phi(y)+\Phi(x)\delta\big(\Phi(y)\big)\\
&=\Big(\Delta(x)+\delta\big(\Phi(x)\big)\Big)\Phi(y)+\Phi(x)\Big(\Delta(y)+\delta\big(\Phi(y)\big)\Big)=\Delta'(x)\Phi(y)+\Phi(x)\Delta'(y).
\end{align}

Several results collectively called the `Vergleichsstellens\"{a}tze' have been derived in \cite{fritz2022}. Of all of them, we will need the following version:
	
\begin{theorem}[Theorem 8.6 in \cite{fritz2022}]\label{thm:Fritz2022}
Let $S$ be a zerosumfree preordered semidomain with a power universal element $u$. Assume that for some $d\in\N$ there is a surjective homomorphism $\|\cdot\|:S\to\R_{>0}^d\cup\{(0,\ldots,0)\}$ with trivial kernel and such that
\begin{equation}
a\succeq b\ \Rightarrow\ \|a\|=\|b\|\quad {\rm and} \quad \|a\|=\|b\|\ \Rightarrow\ a\sim b.
\end{equation}
Denote the component homomorphisms of $\|\cdot\|$ by $\|\cdot\|_{(j)}$, $j=1,\ldots,d$. Let $x,y\in S\setminus\{0\}$ with $\|x\|=\|y\|$ and where $y$ is power universal. If,
\begin{itemize}
\item[(i)] for every $\mb K\in\{\R_+,\R_+^{\rm op},\T\R_+,\T\R_+^{\rm op}\}$ and every nondegenerate monotone homomorphism $\Phi : S \to \mb K$ with trivial kernel, we have $\Phi(x) > \Phi(y)$, and
\item[(ii)] $\Delta(x) > \Delta(y)$ for every monotone $\|\cdot\|_{(j)}$-derivation $\Delta : S \to \R$ with $\Delta(u) = 1$ (modulo interchangeability) for all component indices $j = 1,\ldots,d$,
\end{itemize}
then
\begin{itemize}
\item[(a)] $x^n \succeq y^n$ for all sufficiently large $n\in\N$, and
\item[(b)] $ax\succeq ay$. 
\end{itemize}
Conversely if either of these properties holds for any $n$ or $a$, then the above inequalities hold non-strictly.
\end{theorem}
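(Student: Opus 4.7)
The plan is to leverage the duality between the preordered semiring $S$ and its ``spectrum'' -- the collection of nondegenerate monotone homomorphisms into the test semirings $\R_+$, $\R_+^{\rm op}$, $\T\R_+$, $\T\R_+^{\rm op}$, enriched by monotone derivations at the degenerate component homomorphisms $\|\cdot\|_{(j)}$. Hypotheses (i) and (ii) assert strict inequalities $\Phi(x) < \Phi(y)$ (respectively $\Delta(x) < \Delta(y)$) at every point of this spectrum, and the task is to promote this pointwise domination to the asymptotic algebraic relations (a) and (b). The four choices of $\mb K$ correspond to the ``real'' and ``tropical'' parts of the spectrum and to the two possible order directions; taking all of them together is what makes the collection rich enough to separate the preorder.

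First I would exploit $\|x\| = \|y\|$ together with power universality of $y$ to set up an Archimedean framework. Because $\|x\| = \|y\|$ forces $x \sim y$, every distinction between $x$ and $y$ at the level of the preorder lives ``above'' the norm, i.e.\ in fibres of the component maps $\|\cdot\|_{(j)}$, which are precisely what the nondegenerate homomorphisms and the $\|\cdot\|_{(j)}$-derivations probe. Power universality of $y$ gives the standard Archimedean condition that every comparison in $S$ is witnessed up to bounded powers of $y$; this is the input needed to put a (topological) semiring structure on the spectrum, making it compact in a suitable weak topology.

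The crux -- and the step I expect to be the main obstacle -- is a Positivstellensatz / approximation argument converting strict pointwise inequalities on the spectrum into a finite algebraic certificate. I would proceed in three sub-steps: (1) use compactness of the spectrum to upgrade the pointwise strict inequalities of (i) to a uniform multiplicative gap $\Phi(y) \geq (1+\varepsilon)\Phi(x)$ on the real part and its tropical analogue on the tropical part; (2) treat the tropical homomorphisms separately, since the non-analytic $\max$-sum requires controlling the extremal entries of elements of $S$ rather than integrated quantities, and the passage from tropical to real spectrum is where most technical care is needed; (3) at degenerate homomorphisms where $\Phi(x) = \Phi(y)$, use the derivations of (ii) to supply first-order tangential information -- considered modulo interchangeability precisely so that the additive derivations $\delta \colon \R_+ \to \R$ factoring through $\Phi$ are quotiented out -- which tips the comparison in favour of $y$ after taking a large enough power or incorporating a catalyst.

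For the catalytic statement (b), the particular shape $a = \sum_{\ell = 0}^{n} x^\ell y^{n-\ell}$ is telescoping: in a ring one would have $a(y-x) = y^{n+1} - x^{n+1}$, and this identity explains why $ax \rleq ay$ follows from the same spectrum data as $x^{n+1} \rleq y^{n+1}$, reducing (b) to (a) on the algebraic side. The converse direction is immediate: a monotone homomorphism applied to $x^n \rleq y^n$ yields $\Phi(x)^n \leq \Phi(y)^n$ and hence $\Phi(x) \leq \Phi(y)$, while the Leibniz rule expands $\Delta(x^n) = n\Phi(x)^{n-1}\Delta(x)$ at a degenerate $\Phi$ (where $\Phi(x) = \Phi(y)$), propagating the non-strict inequality from $\Delta(x^n) \leq \Delta(y^n)$ to $\Delta(x) \leq \Delta(y)$; the catalytic case is handled analogously by applying $\Phi$ and $\Delta$ to $ax \rleq ay$ and dividing by $\Phi(a)$.
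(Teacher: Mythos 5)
This statement is quoted verbatim from \cite{fritz2022} (Theorem 8.6 there); the present paper does not prove it but imports it as the engine behind its majorization results, so there is no in-paper proof to compare against. Judged on its own terms, your proposal is a strategy outline rather than a proof, and the gap is exactly where you flag it: the ``Positivstellensatz / approximation argument'' converting strict domination on the spectrum into the relations (a) and (b) is the entire mathematical content of the theorem, and you describe its intended shape (compactness of the spectrum, uniform multiplicative gaps, separate treatment of the tropical part, derivations as first-order data at degenerate points) without supplying any mechanism that actually produces an element-level comparison $x^n\rleq y^n$ in $S$. In Fritz's development this step occupies most of the paper, passing through auxiliary constructions (quotient and test algebras, ambient ordered rings, and a chain of intermediate Vergleichsstellensätze), none of which your sketch reconstructs or replaces.

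A second, more localized problem is your reduction of (b) to (a) via the ``telescoping'' identity $a(y-x)=y^{n+1}-x^{n+1}$. A semiring has no subtraction; the correct identity is the additive one $ax+y^{n+1}=ay+x^{n+1}$, and passing from $x^{n+1}\rleq y^{n+1}$ to $ax\rleq ay$ through it would require cancelling $y^{n+1}$ from both sides of a preorder relation, a cancellation property that general zerosumfree preordered semidomains do not have. In the source, (a) and (b) are obtained jointly from the same certificate rather than one being formally deduced from the other. Your converse direction (applying $\Phi$ and the Leibniz rule for $\Delta$ to $x^n\rleq y^n$ or $ax\rleq ay$ and extracting non-strict inequalities) is correct and is indeed the easy half.
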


Large-sample ordering as in (a) in the above Theorem implies catalytic ordering as in (b), with a catalyst of the form $a=\sum_{\ell=0}^{n-1} x^\ell y^{n-1-\ell}$ for a sufficiently large $n\in\N$. This was shown in \cite{Duan2005} when $x,y$ are probability vectors, but the proof extends directly to the more abstract setting considered here. The converse implication typically does not hold; Theorem 3 of \cite{Feng_et_al_2006} provides a recipe for deriving counter examples in a particular setting.

\section{The minimal restrictions case}\label{sec:findings}

In order to derive conditions for large-sample majorization with varying support conditions for the columns of the matrices, we apply Theorem \ref{thm:Fritz2022}. To this end, we need to define semirings preordered by the extension of the matrix majorization ordering $\rgeq$ defined in Definition \ref{def:matrixmaj} and find their monotone homomorphisms and derivations into each of $\R_+$, $\R_+^{\rm op}$, $\T\R_+$, and $\T\R_+^{\rm op}$. We look at two particular cases: the case where the only condition for the columns of the matrices is that either they are all zero columns or they share some common support and the case where the final column dominates all the other columns. We start with the first case.
	
\subsection{Minimal restrictions semiring}

We now define the semiring to which we will apply the Vergleichsstellensatz in Theorem \ref{thm:Fritz2022} to in order to obtain large-sample majorization results for matrices whose columns need not be mutually dominating. Fix an integer $d\geq2$, which is the number of probability distributions in a tuple. For any integer $n\geq1$, write
\begin{align}
\V_n^d&:=\left\{\,\left(p^{(1)},\ldots,p^{(d)}\right)\in\left(\R_+^n\right)^d\;\middle|\;\bigcap_{k=1}^d{\rm supp}\,p^{(k)}\neq\emptyset\textrm{ or }p^{(k)}=(0,\ldots,0)\;\forall k\in[d]\,\right\},\\
\V^d&:=\bigcup_{n=1}^\infty \V_n^d. 
\end{align}
For any $P=(p^{(1)},\ldots,p^{(d)})\in \V_n^d$ we write $p^{(k)}=(p_1^{(k)},\ldots,p_n^{(k)})$ for $k\in[d]$ and view $P$ as an $n\times d$-matrix. 

We define two binary operations $\boxplus,\boxtimes$ on matrices in $\V^d$. Let $P,Q\in\V^d$. Then 
\begin{equation}
P\boxplus Q:=(p^{(1)}\oplus q^{(1)},\ldots,p^{(d)}\oplus q^{(d)}), 
\end{equation}
where $\oplus$ denotes the concatenation of two vectors. One can view this as stacking the matrices $P$ and $Q$: 
\begin{equation}
P\boxplus Q=\left(\frac{P}{Q}\right). 
\end{equation}
Also, we define 
\begin{equation}
P\boxtimes Q:=(p^{(1)}\otimes q^{(1)},\ldots,p^{(d)}\otimes q^{(d)}), 
\end{equation}
where $\otimes$ is the Kronecker product of two vectors. 

Next, we define a preorder $\rgeq$ on $\V$ as follows: for $P,Q\in\V^d$, we declare $P\rgeq Q$ if there exists a stochastic matrix $T$ such that $TP=Q$. 

The set $\V^d$ together with the operations $\boxplus$, $\boxtimes$ and the preorder $\rgeq$ do not yet form a preordered semiring. For instance, $P\boxplus Q$ and $Q\boxplus P$ have the same rows, but possibly in a different order. We now define an equivalence relation $\approx$ on $\V^d$ such that it becomes a preordered semiring together with the induced operations and preorder. Write $P\approx Q$ when one matrix can be obtained from the other by permuting the rows and adding rows of zeros $(0,\ldots,0)$. Denote by $[P]$ the equivalence class of $P\in\V^d$. The operations $\boxplus$ and $\boxtimes$ induce well-defined operations on $\V^d/\approx$ that are also commutative and transitive. The preorder $\rgeq$ on $\V^d$ induces a well-defined preorder on $\V^d/\approx$ which we still denote by $\rgeq$. Defining $0:=[(0,\ldots,0)]$, $1:=[(1,\ldots,1)]$, one checks that $S^d:=(\V^d/\approx,\boxplus,\boxtimes,0,1,\rgeq)$ is a preordered semiring, which we call the \emph{minimal restrictions matrix majorization semiring of length} $d$. 

In the rest of this paper, we will mostly denote any element $[P]\in\V^d/\approx$ by just one representative matrix $P\in\V^d$, and do calculations with this matrix while freely changing the ordering of the rows and adding or removing rows of zeros. 

Let $P=(p^{(1)},\ldots,p^{(d)}),Q=(q^{(1)},\ldots,q^{(d)})\in\V_n^d$. Since a stochastic map preserves the $1$-norm of vectors, one sees that $P\rleq Q$ implies $\|p^{(k)}\|_1=\|q^{(k)}\|_1$ for all $k\in[d]$. Next, assume $\|p^{(k)}\|_1=\|q^{(k)}\|_1$ for all $k\in[d]$, and let $T$ be the stochastic map that takes the $1$-norm of every column. Then $TP=(\|p^{(1)}\|_1,\ldots,\|p^{(d)}\|_1)=TQ$, hence $P\sim Q$. We conclude that the conditions on $\|\cdot\|:S^d\rightarrow\R_{>0}^d\cup\{0\}$ in Theorem \ref{thm:Fritz2022} are met when we choose $\|P\|:=(\|p^{(1)}\|_1,\ldots,\|p^{(d)}\|_1)$ for $P\in\V^d$, which is clearly a surjective homomorphism. Later, in subsection \ref{subsec:poweruniversalnorestr}, we will show that $S^d$ is of polynomial growth by characterizing the power universals of $S^d$.
	
\subsection{Monotone homomorphisms}
\label{subsec:monotonehomomorphisms}
	
In this section, we characterize all monotone homomorphisms $\Phi:S^d\to\mathbb{K}$, where $\mathbb{K}\in\{\R_+,\R_+^{\rm op},\T\R_+,\T\R_+^{\rm op}\}$. It will turn out that only for $\mathbb{K}=\R_+^{\rm op}$ there exist non-degenerate $\Phi$. 

Define the one-row function $\fii:\R_{>0}^d\cup\{(0,\ldots,0)\}\to\mathbb{K}$, where $\fii(x_1,\ldots,x_d):=\Phi(x_1\,\cdots\,x_d)$. Note that $\fii$ is only defined on those rows of a matrix $P\in S^d$ that contain only positive entries, and the row containing only zeros. Exactly as in the proof of Proposition 13 in \cite{farooq2024}, we can show that 
\begin{equation}
\label{eq:singlerowfunction}
\fii(x_1,\ldots,x_d)=x_1^{\alpha_1}\cdots x_d^{\alpha_d}, 
\end{equation}
where
\begin{equation}
\underline{\alpha}=(\alpha_1,\ldots,\alpha_d)\in\left\{\begin{array}{ll}
A_-&\text{if}\quad\mathbb{K}=\R_+\\
A_+&\text{if}\quad\mathbb{K}=\R_+^{\rm op}\\
B_-&\text{if}\quad\mathbb{K}=\T\R_+\\
B&\text{if}\quad\mathbb{K}=\T\R_+^{\rm op}
\end{array}\right..
\end{equation}
Here, the sets $A_+,A_-,B,B_-$ are as in \eqref{eq:A+}, \eqref{eq:A-}, \eqref{eq:B}, \eqref{eq:B-}, respectively. Note that any $P\in S^d$, $P\neq0$, has at least one row with all positive entries, since the columns must have some common support. We will say that $\Phi$ \emph{is associated with} $\underline{\alpha}$. 

In \cite{farooq2024}, where only matrices $P$ with common support were considered, it was possible to break down $P$ into a $\boxplus$-sum of its rows which can all be viewed as elements of the semiring because each row contains only positive entries, or contains only zeros. In that case, denoting the rows of a matrix $P$ by $p_i$, we may write, for any monotone homomorphism $\Phi$ with codomain $\mathbb{K}$,
\begin{equation}
\Phi(P)=\left\{\begin{array}{ll}
\sum_i \fii(p_i)&\text{if}\quad\mathbb{K}\in\{\R_+,\R_+^{\rm op}\}\\
\max_i \fii(p_i)&\text{if}\quad\mathbb{K}\in\{\T\R_+,\T\R_+^{\rm op}\}
\end{array}\right..
\end{equation}
Thus, it is enough to characterize this one-row function $\fii$ in the common-support case. In the semiring we are considering now, this no longer works because a row with both vanishing and positive entries cannot be seen as an element of our semiring; recall that the columns of any matrix in our semiring must have some common support. This is why we have to consider the value of a homomorphism at matrices with a full-support row and different types of rows with vanishing entries. However, it turns out that we may characterize the monotone homomorphisms considering its values at only two-row matrices where the first row has full support and the second has some vanishing entries. 


We first prove a useful result for the monotone homomorphisms. For any $B\subseteq[d]$ we denote by $(\;B\;)$ the row of length $d$ whose $k$-th entry is 1 if $k\in B$ and otherwise the entry is 0. For any monotone homomorphism $\Phi:S^d\to\mathbb{K}$ for $\mathbb{K}\in\{\R_+,\R_+^{\rm op},\T\R_+,\T\R_+^{\rm op}\}$ denote
\begin{equation}
\Phi[B]:=\Phi\left(\begin{array}{ccc}
1&\cdots&1\\
&B&
\end{array}\right). 
\end{equation}

\begin{lemma}
\label{lem:1or2}
Let $B\subseteq[d]$, and $\Phi:S^d\to\mathbb{K}$ a monotone homomorphism for some $\mathbb{K}\in\{\R_+,\R_+^{\rm op},\T\R_+,\T\R_+^{\rm op}\}$. Then the following holds: 
\begin{enumerate}[(i)]
\item We have 
\begin{equation}
\label{eq:1or2}
\Phi[B]=\left\{\begin{array}{ll}
1\text{ or }2&\text{if}\quad\mathbb{K}\in\{\R_+,\R_+^{\rm op}\}\\
1&\text{if}\quad\mathbb{K}\in\{\T\R_+,\T\R_+^{\rm op}\}
\end{array}\right.. 
\end{equation}
\item Suppose that $\mathbb{K}\in\{\R_+,\R_+^{\rm op}\}$ and that $\Phi$ is associated with $\underline{\alpha}=(\alpha_1,\ldots,\alpha_d)\in A_+\cup A_-$. If $\Phi[B]$ equals $2$, then $\alpha_k=0$ for all $k\notin B$. 
\end{enumerate}		
\end{lemma}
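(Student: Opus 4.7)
The plan is to reduce the claim to algebraic constraints on $y := \Phi(M_B)$, where $M_B$ is the $2 \times d$ matrix of the statement, by applying additivity and multiplicativity of $\Phi$ to three semiring identities, and then combining the resulting equations with bounds coming from monotonicity of $\Phi$ along a single stochastic map. Let $M_{B,n}$ denote the $(n+1) \times d$ matrix with one all-$1$ row and $n$ copies of the $B$-indicator row, so $M_{B,1} = M_B$. The three identities I would establish first are:
\begin{enumerate}[(a)]
\item $M_{B,n+1} \boxplus \mathbf{1} = M_{B,n} \boxplus M_B$ in $S^d$ for all $n \geq 1$;
\item $M_B^{\boxtimes 2} = M_{B,3}$ in $S^d$, since $b_k^2 = b_k$ for $b_k \in \{0,1\}$;
\item $(M_B \boxtimes M_{[d]\setminus B}) \boxplus \mathbf{1} = M_B \boxplus M_{[d]\setminus B}$ in $S^d$, after removing the all-zero row of $M_B \boxtimes M_{[d]\setminus B}$.
\end{enumerate}
Each reduces to a direct comparison of the multisets of nonzero rows after computing columnwise Kronecker products, using freely the equivalence relation $\approx$ (permutation and addition of zero rows).

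For (i) in the non-tropical case, applying $\Phi$ to (a) gives the recurrence $\Phi(M_{B,n+1}) - \Phi(M_{B,n}) = y - 1$, so inductively $\Phi(M_{B,n}) = 1 + n(y-1)$.
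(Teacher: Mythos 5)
Your setup for part (i) is essentially the paper's argument in disguise: combining your identities (a) and (b) gives $y^2=\Phi(M_{B,3})=1+3(y-1)=3y-2$, which is exactly the paper's single identity $M_B^{\boxtimes 2}\boxplus\bigl(\begin{smallmatrix}1&\cdots&1\\1&\cdots&1\end{smallmatrix}\bigr)=M_B^{\boxplus 3}$, yielding $y\in\{1,2\}$. But you stop before drawing that conclusion, and your stated recurrence $\Phi(M_{B,n})=1+n(y-1)$ is simply false in the tropical semirings, where addition is $\max$ and (a) only gives $\max\{\Phi(M_{B,n+1}),1\}=\max\{\Phi(M_{B,n}),y\}$. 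The tropical half of (i) is recoverable ($n=1$ forces $y\geq1$ and $\Phi(M_{B,2})\leq y$, and then (b) with $n=2$ forces $y^2=y$, so $y=1$), but as written it is absent.

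The genuine gap is part (ii), which your proposal never addresses and which your three identities cannot prove. All of $M_{B,n}$, $M_B^{\boxtimes2}$, and $M_B\boxtimes M_{[d]\setminus B}$ have entries in $\{0,1\}$, so the exponents $\alpha_k$ — which enter only through the single-row function $\fii(x_1,\ldots,x_d)=x_1^{\alpha_1}\cdots x_d^{\alpha_d}$ evaluated at rows with entries other than $0$ and $1$ — never appear in any equation you can derive from them. The most your identity (c) yields is $(\Phi[B]-1)(\Phi[[d]\setminus B]-1)=0$, i.e.\ that $B$ and its complement cannot both be nontrivial, which is a statement about $\Phi[[d]\setminus B]$, not about $\underline{\alpha}$. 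The paper's proof of (ii) necessarily introduces a row $(x_1\,\cdots\,x_d)$ with $x_k=2$ for $k\in B$ and uses a two-sided majorization (merging the two copies of the $B$-row into $(y_1\,\cdots\,y_d)$ and splitting it back) together with the multiplicative identity $\bigl(\begin{smallmatrix}x\\y\end{smallmatrix}\bigr)\boxplus\bigl(\begin{smallmatrix}1&\cdots&1\\1&\cdots&1\end{smallmatrix}\bigr)=(x)\boxtimes M_B\boxplus\bigl(\begin{smallmatrix}1&\cdots&1\\1&\cdots&1\end{smallmatrix}\bigr)$; this is what produces the equation $4+2^{\sum_{k\in B}\alpha_k}=2\cdot2^{\sum_{k\in B}\alpha_k}+2$ and hence $\sum_{k\in B}\alpha_k=1$ (plus the structure of $A_-$ for $\mathbb{K}=\R_+$). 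If your "bounds from monotonicity along a single stochastic map" were meant to supply this, you would need to say which matrices and which map; without that, part (ii) is unproven.
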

	
\begin{proof}
To prove (i), notice 
\begin{equation}
\label{eq:basicmatrixequation}
\left(\begin{array}{ccc}
1&\cdots&1\\
&B&
\end{array}\right)^{\boxtimes2}\boxplus\left(\begin{array}{ccc}
1&\cdots&1\\
1&\cdots&1
\end{array}\right)=\left(\begin{array}{ccc}
1&\cdots&1\\
&B&
\end{array}\right)\boxplus\left(\begin{array}{ccc}
1&\cdots&1\\
&B&
\end{array}\right)\boxplus\left(\begin{array}{ccc}
1&\cdots&1\\
&B&
\end{array}\right). 
\end{equation}
Using additivity and multiplicativity of $\Phi$, and the fact that we know that $\Phi$ evaluated on single rows with only positive entries is given by \eqref{eq:singlerowfunction}, the above equality, for $\mathbb{K}\in\{\R_+,\R_+^{\rm op}\}$, yields the equation 
\begin{equation}
\left(\Phi[B]\right)^2+2=3\Phi[B], 
\end{equation}
and for $\mathbb{K}\in\{\T\R_+,\T\R_+^{\rm op}\}$, yields the equation 
\begin{equation}
\max\left\{\left(\Phi[B]\right)^2,1\right\}=\Phi[B]. 
\end{equation}
The solutions to the first equation are $1$ and $2$, and to the second the unique solution is $1$, proving \eqref{eq:1or2}. 
		
To prove (ii), define the numbers $x_k$ coinciding with $2$ when $k\in B$ and otherwise $x_k=1$ and $y_k$ coinciding with $2$ when $k\in B$ and otherwise $y_k=0$. Denoting by $\substack{\rgeq\\\rleq}$ majorization in both directions simultaneously, we have
\begin{equation}
\label{eq:1or2a}
\left(\begin{array}{ccc}
1&\cdots&1\\
&B&
\end{array}\right)\boxplus\left(\begin{array}{ccc}
1&\cdots&1\\
&B&
\end{array}\right)\boxplus (x_1\,\cdots\,x_d)\,\substack{\rgeq\\\rleq}\left(\begin{array}{ccc}
x_1&\cdots&x_d\\
y_1&\cdots&y_d
\end{array}\right)\boxplus\left(\begin{array}{ccc}
1&\cdots&1\\
1&\cdots&1
\end{array}\right), 
\end{equation}
where the LHS majorizes the RHS because there is a stochastic matrix that adds the two rows $(\;B\;)$ to obtain the row $(y_1\,\cdots\,y_d)$, and the RHS majorizes the LHS because there is a stochastic matrix that splits the row $(y_1\,\cdots\,y_d)$ into two copies of the row $(\;B\;)$. Next, notice that the RHS is equal to
\begin{equation}
\label{eq:1or2b}
    (x_1\,\cdots\,x_d)\boxtimes\left(\begin{array}{ccc}
        1&\cdots&1\\
        &B&
    \end{array}\right)\boxplus\left(\begin{array}{ccc}
        1&\cdots&1\\
        1&\cdots&1
    \end{array}\right). 
\end{equation}
Using the monotonicity, additivity and multiplicativity of $\Phi$, and the single-row function given by \eqref{eq:singlerowfunction}, the two equations \eqref{eq:1or2a} and \eqref{eq:1or2b} above yield the equation 
\begin{equation}
2+2+2^{\sum_{k\in B}\alpha_k}=2^{\sum_{k\in B}\alpha_k}\cdot2+2, 
\end{equation}
hence $\sum_{k\in B}\alpha_k=1$. This implies $\alpha_k=0$ for all $k\notin B$, where for the case $\mathbb{K}=\R_+$ we use the fact that $\underline{\alpha}\in A_-$ and therefore only one of the coordinates $\alpha_k$ is positive. 
\end{proof}

Consider a monotone homomorphism $\Phi:S^d\to\mathbb{K}$ for $\mathbb{K}\in\{\R_+,\R_+^{\rm op},\T\R_+,\T\R_+^{\rm op}\}$. We call $B\subseteq[d]$ \emph{$\Phi$-trivial} if $\Phi[B]=1$ and \emph{$\Phi$-nontrivial} if $\Phi[B]=2$. Note that if $\mathbb{K}\in\{\T\R_+,\T\R_+^{\rm op}\}$, all sets $B$ are $\Phi$-trivial. Furthermore, when $\mathbb{K}\in\{\R_+,\R_+^{\rm op}\}$, the empty set is $\Phi$-trivial and $[d]$ is $\Phi$-nontrivial. 

\begin{lemma}
\label{clm:nontrivialB}
Fix a monotone homomorphism $\Phi:S^d\to\mathbb{K}$ for $\mathbb{K}\in\{\R_+,\R_+^{\rm op}\}$. Let $B_1,B_2\subseteq[d]$. The following holds: 
\begin{enumerate}[(i)]
\item If $B_1$ and $B_2$ are $\Phi$-nontrivial, then $B_1\cap B_2$ is also $\Phi$-nontrivial. 
\item If $B_1\subseteq B_2$ and $B_1$ is $\Phi$-nontrivial, then $B_2$ is also $\Phi$-nontrivial. 
\end{enumerate}
\end{lemma}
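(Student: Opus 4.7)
The plan is to derive a single $\approx$-identity in $S^d$ that captures the essential combinatorics, and then extract both (i) and (ii) from it by applying $\Phi$ and using Lemma~\ref{lem:1or2}(i). The key observation is that, since the entrywise product of the indicator rows $(\;B_1\;)$ and $(\;B_2\;)$ equals $(\;B_1\cap B_2\;)$, the Kronecker product
\[
\left(\begin{array}{ccc}1&\cdots&1\\&B_1&\end{array}\right)\boxtimes\left(\begin{array}{ccc}1&\cdots&1\\&B_2&\end{array}\right)
\]
has exactly the four rows $(1,\ldots,1),\,(\;B_2\;),\,(\;B_1\;),\,(\;B_1\cap B_2\;)$. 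Stacking two extra copies of the single-row matrix $(1\,\cdots\,1)$ on top via $\boxplus$ yields the multiset of rows $\{3\cdot(1,\ldots,1),\,(\;B_1\;),\,(\;B_2\;),\,(\;B_1\cap B_2\;)\}$, which coincides with the multiset of rows of
\[
\left(\begin{array}{ccc}1&\cdots&1\\&B_1\cap B_2&\end{array}\right)\boxplus\left(\begin{array}{ccc}1&\cdots&1\\&B_1&\end{array}\right)\boxplus\left(\begin{array}{ccc}1&\cdots&1\\&B_2&\end{array}\right).
\]
Hence these two elements agree in $S^d=\V^d/{\approx}$.

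Applying $\Phi$ to both sides, and using additivity, multiplicativity, and $\Phi((1,\ldots,1))=1$, turns this $\approx$-identity into the arithmetic relation
\[
\Phi[B_1]\,\Phi[B_2]+2=\Phi[B_1\cap B_2]+\Phi[B_1]+\Phi[B_2],
\]
which I rearrange as $\Phi[B_1\cap B_2]-1=(\Phi[B_1]-1)(\Phi[B_2]-1)$. By Lemma~\ref{lem:1or2}(i) each $\Phi[B_i]$ lies in $\{1,2\}$, so each factor on the right is $0$ or $1$. For (i), if both $\Phi[B_i]=2$ the right-hand side equals $1$, forcing $\Phi[B_1\cap B_2]=2$; this simultaneously shows $B_1\cap B_2\neq\emptyset$, since the empty-intersection matrix reduces under $\approx$ to a single all-ones row and would give value $1$. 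For (ii), the inclusion $B_1\subseteq B_2$ gives $B_1\cap B_2=B_1$, so the identity reads $\Phi[B_1]-1=(\Phi[B_1]-1)(\Phi[B_2]-1)$; with $\Phi[B_1]=2$ the nonzero factor $\Phi[B_1]-1=1$ can be cancelled to conclude $\Phi[B_2]=2$.

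The main obstacle is noticing the identity itself: the whole argument collapses once one sees that $\boxtimes$ produces the intersection indicator $(\;B_1\cap B_2\;)$ as one of its four rows, and that a careful rebalancing of all-ones rows then equates two $\boxplus$-sums. Everything after that---verifying the $\approx$-identity by a tally of rows and extracting (i) and (ii) from the algebraic relation---is routine. A minor bookkeeping point to keep in mind is the edge case $B_1\cap B_2=\emptyset$, which is handled automatically by the $\approx$-rule allowing removal of zero rows, so that $\Phi[\emptyset]=1$ and the derivation goes through uniformly.
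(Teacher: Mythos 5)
Your proof is correct and follows essentially the same route as the paper: the same $\approx$-identity (the $\boxtimes$-product of the two two-row matrices producing the row $(\;B_1\cap B_2\;)$, rebalanced with all-ones rows) yielding the relation $\Phi[B_1]\Phi[B_2]+2=\Phi[B_1]+\Phi[B_2]+\Phi[B_1\cap B_2]$. Your factored form $(\Phi[B_1]-1)(\Phi[B_2]-1)=\Phi[B_1\cap B_2]-1$ is a tidy way to extract both parts from one identity (the paper writes a second, specialized identity for (ii)), and your explicit handling of the $B_1\cap B_2=\emptyset$ edge case is a small but welcome addition.
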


Note that this Lemma implies that when $\mathbb{K}\in\{\R_+,\R_+^{\rm op}\}$, the set of all $\Phi$-nontrivial sets is a filter of the power set of $[d]$. 

\begin{proof}
For the proof of (i), assume that $B_1$ and $B_2$ are $\Phi$-nontrivial. Notice 
\begin{equation}
\left(\begin{array}{ccc}
1&\cdots&1\\
1&\cdots&1
\end{array}\right)\boxplus\left(\begin{array}{ccc}
1&\cdots&1\\
&B_1&
\end{array}\right)\boxtimes\left(\begin{array}{ccc}
1&\cdots&1\\
&B_2&
\end{array}\right)=\left(\begin{array}{ccc}
1&\cdots&1\\
1&\cdots&1\\
1&\cdots&1\\
&B_1&\\
&B_2&\\
&B_1\cap B_2&
\end{array}\right). 
\end{equation}
This yields 
\begin{equation}
2+\Phi[B_1]\Phi[B_2]=\Phi[B_1]+\Phi[B_2]+\Phi[B_1\cap B_2]. 
\end{equation}
Since $\Phi[B_1]=2=\Phi[B_2]$, we conclude that $\Phi[B_1\cap B_2]=2$, hence $B_1\cap B_2$ is $\Phi$-nontrivial. 
	
Next, for the proof of (ii), assume that $B_1$ is $\Phi$-nontrivial. Notice 
\begin{equation}
\left(\begin{array}{ccc}
1&\cdots&1\\
1&\cdots&1
\end{array}\right)\boxplus\left(\begin{array}{ccc}
1&\cdots&1\\
&B_1&
\end{array}\right)\boxtimes\left(\begin{array}{ccc}
1&\cdots&1\\
&B_2&
\end{array}\right)=\left(\begin{array}{ccc}
1&\cdots&1\\
1&\cdots&1\\
1&\cdots&1\\
&B_1&\\
&B_2&\\
&B_1&
\end{array}\right). 
\end{equation}
This yields 
\begin{equation}
2+\Phi[B_1]\Phi[B_2]=2\Phi[B_1]+\Phi[B_2]. 
\end{equation}
Since $\Phi[B_1]=2$, we conclude that $\Phi[B_2]=2$, hence $B_2$ is $\Phi$-nontrivial.
\end{proof}

For any homomorphism $\Phi:S^d\to\mathbb{K}\in\{\R_+,\R_+^{\rm op}\}$, we define the {\it character} $C\subseteq[d]$ as the intersection of all $\Phi$-nontrivial subsets of $[d]$. $C$ is well-defined since $[d]$ itself is always $\Phi$-nontrivial. According to Lemma \ref{clm:nontrivialB}, $C$ is $\Phi$-nontrivial and, by construction, it is the smallest $\Phi$-nontrivial subset of $[d]$. Moreover, $C$ is non-empty, since the empty set is always $\Phi$-trivial. Additionally, observe that $B\subseteq[d]$ is $\Phi$-nontrivial if and only if $C\subseteq B$. 

Let $\underline{\alpha}=(\alpha_1,\ldots,\alpha_d)\in\R_+^d$ with $\alpha_1+\ldots+\alpha_d=1$, and $C=\{c_1,\ldots,c_\ell\}\subseteq[d]$, $C\neq\emptyset$, such that $C$ contains (at least) all columns $c$ with $\alpha_c\neq0$. Denote by $C^\prime:=\{c^\prime_1,\ldots,c^\prime_{\ell^\prime}\}\subseteq C$ the subset of columns $c\in C$ that satisfy $\alpha_c\neq0$. Define $\varphi_{\underline{\alpha},C}:\R_+^{\ell^\prime}\to\R$ by 
\begin{equation}
\varphi_{\underline{\alpha},C}(x_{c^\prime_1},\ldots,x_{c^\prime_{\ell^\prime}}):=x_{c^\prime_1}^{\alpha_{c^\prime_1}}\cdots x_{c^\prime_{\ell^\prime}}^{\alpha_{c^\prime_{\ell^\prime}}}. 
\end{equation}
One can show that $\varphi_{\underline{\alpha},C}$ is concave on $\R_{>0}^{\ell^\prime}$ (see the proof of Proposition 13 in \cite{farooq2024}). Then, by continuity of $\varphi_{\underline{\alpha},C}$ and the fact that all powers $\alpha_{c^\prime}>0$ for $c^\prime\in C^\prime$, using that $\lim_{x\downarrow0}x^\alpha=0$ for $\alpha>0$, one concludes that $\varphi_{\underline{\alpha},C}$ is concave on its entire domain $\R_+^{\ell^\prime}$. In the following, we will abuse notation by writing $\varphi_{\underline{\alpha},C}(x)$ for $x\in\R_+^d$ instead of $x\in\R_+^{\ell^\prime}$, where it is understood that we ignore the coordinates not in $C^\prime$. 
	
We now define the homomorphism $\Phi_{\underline{\alpha},C}:S^d\to\R_+^{\rm op}$ by 
\begin{equation}\label{eq:homomorphismsum}
\Phi_{\underline{\alpha},C}(P):=\sum_{i\in I(P,C)}\varphi_{\underline{\alpha},C}\left(p_i^{(1)},\ldots,p_i^{(d)}\right),\quad\quad P=(p^{(1)},\ldots,p^{(d)})\in \V_{n}^d, 
\end{equation}
where 
\begin{equation}\label{eq:Iset}
I(P,C):=\left\{\,i\in[n]\,\middle|\,p_i^{(c)}>0\quad\forall c\in C\,\right\}=\bigcap_{c\in C}{\rm supp}\,p^{(c)}.
\end{equation}
One can easily check that these are indeed homomorphisms and we will later see that they are also monotone. By considering the rows $I(P,C)$ of $P$ that $\Phi_{\underline{\alpha},C}$ sums over, one sees that $C$ is the smallest $\Phi_{\underline{\alpha},C}$-nontrivial subset of $[d]$, and therefore coincides with the character of the homomorphism $\Phi_{\underline{\alpha},C}$ as defined above. The degenerate ones among these homomorphisms are exactly all $\Phi_{\underline{\alpha},C}=:\Phi^{(k)}$ with $\alpha_k=1$ for some $k$ and $\alpha_{k^\prime}=0$ for all $k^\prime\neq k$, and $C=\{k\}$; indeed, these give the same value for $P$ and $Q$ whenever $P\sim Q$. Denote all the non-degenerate homomorphisms $\Phi_{\underline{\alpha},C}$ by $\mathcal{H}^{\rm n.d.}(S^d,\R_+^{\rm op})$, i.e., denoting by ${\rm supp}\,\underline{\alpha}$ for all $\underline{\alpha}=(\alpha_1,\ldots,\alpha_d)\in A_+$ (where $A_+\subset\R^d$ is given in \eqref{eq:A+}) the subset of those $k\in[d]$ such that $\alpha_k>0$,
\begin{equation}\label{eq:temperateopposite}
\mathcal{H}^{\rm n.d.}(S^d,\R_+^{\rm op})=\left\{\Phi_{\underline{\alpha},C}\,\middle|\,\underline{\alpha}\in A_+,\ {\rm supp}\,\underline{\alpha}\subseteq C\subseteq[d]\right\}\setminus\big\{\Phi^{(k)}\big\}_{k=1}^d.
\end{equation}
	
Note that if $C$ consists of all $c\in[d]$ that satisfy $\alpha_c>0$, and no other $c\in[d]$, then $\Phi_{\underline{\alpha},C}$ essentially sums over all rows of any matrix in $S^d$. Hence a homomorphism with this property is continuous in its argument. From now on, we simply call elements of $\mathcal{H}^{\rm n.d.}(S^d,\R_+^{\rm op})$ with this property continuous, i.e. 
\begin{equation}\label{eq:continuity}
\Phi_{\underline{\alpha},C}\ {\rm continuous}\quad \iff \quad C={\rm supp}\,\underline{\alpha}.
\end{equation}
A homomorphism not satisfying this property is always discontinuous. For example in the case $d=2$ and for the homomorphism $\Phi_{(0,1),\{1,2\}}$, this can be seen by considering 
\begin{equation}
P_\varepsilon:=\left(\begin{array}{ccc}
1-\varepsilon&1/2\\
\varepsilon&1/2
\end{array}\right), 
\end{equation}
where $\varepsilon\geq0$, and noting that there is a gap between $\Phi_{(0,1),\{1,2\}}(P_0)=1/2$ and $\Phi_{(0,1),\{1,2\}}(P_\varepsilon)=1$ for $\varepsilon>0$, although $\lim_{\varepsilon\rightarrow0}P_\varepsilon=P_0$. 
	
We first show that all homomorphisms in $\mathcal{H}^{\rm n.d.}(S^d,\R_+^{\rm op})$ are monotone, and then prove that these are in fact all non-degenerate monotone homomorphisms. 
	
\begin{proposition}
All homomorphisms in $\mathcal{H}^{\rm n.d.}(S^d,\R_+^{\rm op})$ are monotone. 
\end{proposition}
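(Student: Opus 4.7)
The plan is to show directly that for any $P \rgeq Q$ in $S^d$, i.e.,\ $Q = TP$ for some column-stochastic matrix $T$, one has $\Phi_{\underline{\alpha},C}(Q) \geq \Phi_{\underline{\alpha},C}(P)$ in the usual ordering of $\R_+$, which is precisely the monotonicity condition for a homomorphism into $\R_+^{\rm op}$. I will write $\Phi := \Phi_{\underline{\alpha},C}$ and $\varphi := \varphi_{\underline{\alpha},C}$, denote the rows of $P$ and $Q$ by $p_j$ and $q_i$, and use the componentwise relation $q_i = \sum_j T_{ij}\,p_j$.

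The central ingredient will be superadditivity of $\varphi$ on $\R_+^d$, namely $\varphi(a+b) \geq \varphi(a) + \varphi(b)$. This follows by combining the concavity of $\varphi$ established just before the proposition with its $1$-homogeneity $\varphi(tx) = t\,\varphi(x)$ for $t \geq 0$, which in turn is immediate from $\sum_{c' \in C'}\alpha_{c'} = 1$, where $C' := {\rm supp}\,\underline{\alpha}$: one writes $\varphi(a+b) = 2\varphi\bigl(\tfrac{a+b}{2}\bigr)$ and invokes concavity. Iterating and using $1$-homogeneity once more to pull out the scalars $T_{ij}$, I obtain the row-wise inequality $\varphi(q_i) \geq \sum_j T_{ij}\,\varphi(p_j)$ for every row index $i$ of $Q$.

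The main obstacle is reconciling this row-wise bound with the restricted sums over $I(Q,C)$ and $I(P,C)$ that appear in the definition of $\Phi$, which is especially delicate in the discontinuous case $C \supsetneq C'$, since $\varphi(p_j)$ may be strictly positive even when $j \notin I(P,C)$. The key observation that resolves this is a support-preservation property: if $j \in I(P,C)$ and $T_{ij} > 0$, then $i \in I(Q,C)$, because $q_i^{(c)} \geq T_{ij}\,p_j^{(c)} > 0$ for every $c \in C$. Summing the row-wise inequality over $i \in I(Q,C)$ and exchanging the order of summation, this observation combined with column-stochasticity of $T$ forces $\sum_{i \in I(Q,C)} T_{ij} = \sum_i T_{ij} = 1$ whenever $j \in I(P,C)$, while the remaining terms with $j \notin I(P,C)$ are nonnegative and can simply be dropped. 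The chain of inequalities therefore collapses to $\Phi(Q) \geq \sum_{j \in I(P,C)} \varphi(p_j) = \Phi(P)$, as required.
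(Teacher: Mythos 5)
Your proposal is correct and follows essentially the same route as the paper's proof: the row-wise bound $\varphi(q_i)\geq\sum_j T_{ij}\varphi(p_j)$ obtained from concavity plus $1$-homogeneity (your superadditivity step is just a repackaging of the paper's Jensen step with the normalization $T_i=\sum_j T_{ij}$), followed by the same support-preservation observation, stated in the paper in contrapositive form as $j\in I(P,C)$ and $i\notin I(Q,C)$ implying $T_{ij}=0$, which yields $\sum_{i\in I(Q,C)}T_{ij}=1$ for $j\in I(P,C)$. Dropping the nonnegative terms with $j\notin I(P,C)$ completes the argument exactly as in the paper.
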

	
\begin{proof}
Let $\underline{\alpha}\in\R_+^d$ such that $\alpha_1+\ldots+\alpha_d=1$, and $C\in\mathcal{P}([d])\setminus\emptyset$ such that $C$ contains all columns $c$ with $\alpha_c\neq0$. Consider $P\in\V_n^d$ with rows $p_1,\ldots,p_n\in\R_+^d$ and a stochastic $m\times n$-matrix $T$, and write $Q:=TP\in\V_m^d$ with rows $q_1,\ldots,q_m\in\R_+^d$, and $T_i:=\sum_{j=1}^n T_{i,j}>0$. Using the concavity of $\varphi_{\underline{\alpha},C}$ in the first inequality, we find 
\begin{align}
\Phi_{\underline{\alpha},C}(Q)&=\sum_{i\in I(Q,C)}\varphi_{\underline{\alpha},C}(q_i)=\sum_{i\in I(Q,C)}\varphi_{\underline{\alpha},C}\left(\sum_{j=1}^n T_{i,j}p_j\right)=\sum_{i\in I(Q,C)}T_i\varphi_{\underline{\alpha},C}\left(\sum_{j=1}^n \frac{T_{i,j}}{T_i}p_j\right)\\
&\geq\sum_{i\in I(Q,C)}T_i\sum_{j=1}^n \frac{T_{i,j}}{T_i}\varphi_{\underline{\alpha},C}(p_j)=\sum_{j=1}^n \varphi_{\underline{\alpha},C}(p_j)\sum_{i\in I(Q,C)}T_{i,j}\\
&=\sum_{j\in I(P,C)} \varphi_{\underline{\alpha},C}(p_j)\underbrace{\sum_{i\in I(Q,C)}T_{i,j}}_{=1}+\sum_{j\notin I(P,C)} \varphi_{\underline{\alpha},C}(p_j)\sum_{i\in I(Q,C)}T_{i,j}\\
&\geq\sum_{j\in I(P,C)} \varphi_{\underline{\alpha},C}(p_j)=\Phi_{\underline{\alpha},C}(P). 
\end{align}
For the second inequality, consider any $j\in I(P,C)$ and $i\notin I(Q,C)$. Then there exists a column $k\in C$ such that $\sum_{j^\prime=1}^n T_{i,j^\prime}(p_{j^\prime})_k=0$, hence $T_{i,j}(p_j)_k=0$. Since $j\in I(P,C)$ implies that $(p_j)_k>0$, we conclude that $T_{i,j}=0$. This means that $\sum_{i\in I(Q,C)}T_{i,j}=\sum_{i=1}^m T_{i,j}=1$ for all $j\in I(P,C)$. 
\end{proof}
	
\begin{proposition}\label{pro:monhommin}
The non-degenerate monotone homomorphisms $S^d\rightarrow\R_+^{\rm op}$ are exactly those in $\mathcal{H}^{\rm n.d.}(S^d,\R_+^{\rm op})$. For $\mathbb{K}$ equal to $\R_+$, $\T\R_+$ or $\T\R_+^{\rm op}$, there are no non-degenerate monotone homomorphisms $S^d\rightarrow\mathbb{K}$. 
\end{proposition}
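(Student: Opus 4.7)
The plan is to handle each codomain $\mathbb K$ separately. The single-row function \eqref{eq:singlerowfunction} already pins down $\underline{\alpha}$ to lie in $A_-$, $A_+$, $B_-$, or $B$ respectively, so the remaining work is to compute $\Phi$ on multi-row matrices. The central device is the pair
\[
\begin{pmatrix}1 & \cdots & 1 \\ & B & \end{pmatrix}\ \rgeq\ \widehat P_B,
\]
obtained from the row-sum stochastic map, where $\widehat P_B$ is the single full-support row with entries $2$ at positions in $B$ and $1$ elsewhere. Thus $\Phi(\widehat P_B) = 2^{\sum_{k \in B}\alpha_k}$, while Lemma~\ref{lem:1or2} controls $\Phi$ of the first matrix as $\Phi[B]$.

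For $\mathbb K \in \{\R_+, \T\R_+\}$ the associated $\underline{\alpha}$ has an index $k_0$ with $\alpha_{k_0} > 0$. Taking $B = \{k_0\}$, Lemma~\ref{lem:1or2} forces $\Phi[\{k_0\}] = 1$ whenever $\underline{\alpha} \neq e_{k_0}$; monotonicity then demands $2^{\alpha_{k_0}} \leq 1$, contradicting $\alpha_{k_0} > 0$. The leftover possibility $\underline{\alpha} = e_{k_0}$ either lies outside $B_-$ (for $\T\R_+$) or yields the degenerate $P \mapsto \|p^{(k_0)}\|_1$. For $\mathbb K = \T\R_+^{\rm op}$ monotonicity reverses, giving $\sum_{k \in B} \alpha_k \geq 0$ for every proper nonempty $B$; taking $B = \{k\}$ and combining with $\sum_k \alpha_k = 0$ forces $\underline{\alpha} = 0$ and hence a degenerate $\Phi$.

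For $\mathbb K = \R_+^{\rm op}$ define $C$ as the intersection of all $\Phi$-nontrivial subsets of $[d]$. This $C$ is itself $\Phi$-nontrivial by Lemma~\ref{clm:nontrivialB}(i), and Lemma~\ref{lem:1or2}(ii) gives $\mathrm{supp}\,\underline{\alpha} \subseteq C$. Write $P_{z,B} := \begin{pmatrix}1 & \cdots & 1 \\ z\end{pmatrix}$ for a row $z$ with support $B$, and set $f(z,B) := \Phi(P_{z,B}) - 1$. The identity
\[
P_{x,B} \boxtimes P_{y,B'}\ \boxplus\ \begin{pmatrix}1 & \cdots & 1 \\ 1 & \cdots & 1\end{pmatrix}\ \approx\ P_{x,B}\ \boxplus\ P_{y,B'}\ \boxplus\ P_{xy,B \cap B'},
\]
verified by matching the six rows on each side and noting that $xy$ (componentwise) has support $B \cap B'$, together with the additivity and multiplicativity of $\Phi$, yields the functional equation $f(xy, B \cap B') = f(x,B)\,f(y,B')$. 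Specializing $x = (\;B\;)$ (so that $f((\;B\;), B) = \Phi[B] - 1$ equals $1$ exactly when $C \subseteq B$, by Lemma~\ref{clm:nontrivialB}(ii)) and $y$ a full-support row then gives $f(z,B) = \chi(C \subseteq B)\,\varphi_{\underline{\alpha},C}(z)$; the necessary independence of the right-hand side from $y_k$ for $k \notin B$ is automatic because $\mathrm{supp}\,\underline{\alpha} \subseteq C \subseteq B$ whenever the prefactor is nonzero.

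To extend to arbitrary $P \in \V^d$, use that $P$ possesses a full-support row $p_1$ and write $P \approx p_1 \boxtimes Q$ where the rows of $Q$ are $(1, \ldots, 1), q_2, \ldots, q_n$ with $q_i := p_i / p_1$ componentwise. The additive identity
\[
Q\ \boxplus\ \bigboxplus_{j=1}^{n-2}\begin{pmatrix}1 & \cdots & 1\end{pmatrix}\ \approx\ \bigboxplus_{i=2}^n P_{q_i, \mathrm{supp}\,p_i}
\]
combined with $\Phi(P) = \fii(p_1)\,\Phi(Q)$ collapses, via the closed form of $f$ above, to $\Phi(P) = \sum_{i\,:\,C \subseteq \mathrm{supp}\,p_i} \varphi_{\underline{\alpha},C}(p_i) = \Phi_{\underline{\alpha},C}(P)$. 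The main obstacle throughout is that rows with zero entries do not themselves lie in $\V^d$, forcing every formal decomposition to pair each partial-support row with a full-support companion; this is precisely why the two-row matrices $P_{z,B}$ are the basic building blocks of the argument and why the character $C$ --- rather than just $\mathrm{supp}\,\underline{\alpha}$ --- governs $\Phi$.
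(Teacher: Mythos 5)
Your treatment of $\mathbb K=\R_+^{\rm op}$ is correct and amounts to a clean repackaging of the paper's argument: the row-multiset identity
\begin{equation*}
P_{x,B}\boxtimes P_{y,B'}\ \boxplus\ \left(\begin{array}{ccc}1&\cdots&1\\1&\cdots&1\end{array}\right)\ \approx\ P_{x,B}\boxplus P_{y,B'}\boxplus P_{xy,\,B\cap B'}
\end{equation*}
and the resulting functional equation $f(xy,B\cap B')=f(x,B)f(y,B')$ replace the paper's telescoping of $\Phi\big(\tilde p_i^{\,T}\boxplus p_i^{\,T}\big)-\fii(\tilde p_i)$, and the assembly via $P\approx p_1\boxtimes Q$ is sound. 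Your positive-exponent argument for $\T\R_+$ (namely $\Phi[\{k_0\}]=1\geq 2^{\beta_{k_0}}$ via the row-sum map) is also a genuinely shorter route than the paper's computation \eqref{eq:tropicalhomomorphismcontradiction}.

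The gap lies in the remaining codomains, where you pass from ``the single-row function is trivial, or equals $x\mapsto x_{k_0}$'' directly to ``$\Phi$ is degenerate.'' In this semiring that inference is precisely the hard step: a partial-support row is not an element of $S^d$, so the single-row function does not determine $\Phi$ on matrices containing such rows. Indeed there exist additive, multiplicative, \emph{non-degenerate} functionals whose single-row function is $x\mapsto x_{k_0}$, namely $\Phi_{e_{k_0},C}$ with $C\supsetneq\{k_0\}$; they fail to be $\R_+$-monotone, but that must be proved, not assumed. So for $\mathbb K=\R_+$ you still need to (a) derive the general form $\Phi(P)=\sum_{i:\,C\subseteq{\rm supp}\,p_i}p_i^{(k_0)}$ --- your $f(z,B)$ machinery does this, since it never uses the direction of the order --- and (b) exhibit a monotonicity violation ruling out every character $C\supsetneq\{k_0\}$ (e.g.\ the matrices $P_\varepsilon$ in the text, or the paper's sandwich \eqref{eq:Phiis0}). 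For $\T\R_+^{\rm op}$ you correctly force $\underline\beta=0$, but must then show $\Phi$ is constant on matrices with zero entries; the multiplicative trick does not transfer because $\boxplus$ now maps to $\max$, so a separate argument (the paper's support-splitting) is required. Finally, for $\T\R_+$ you implicitly assume the associated $\underline\beta$ has a strictly positive entry; since a homomorphism of $S^d$ may restrict to a \emph{degenerate} homomorphism of the full-support sub-semiring, the case $\underline\beta=0$ is live (the paper excludes it explicitly) and your argument does not cover it.
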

	
\begin{proof}
Let us first consider the case $\mathbb{K}\in\{\R_+,\R_+^{\rm op}\}$ and fix a monotone homomorphism $\Phi:S^d\to\mathbb{K}$. Let $\underline{\alpha}=(\alpha_1,\ldots,\alpha_d)\in A_+\cup A_-$ be the parameter vector which $\Phi$ is associated with and $C\subseteq[d]$ be the character of $\Phi$. 
For any $x\in\R_+^d$, denote by $\tilde{x}\in\R_{>0}^d$ the vector obtained from $x$ by replacing all vanishing entries with $1$. We now extend the domain $\R_{>0}^d\cup\{(0,\ldots,0)\}$ of the map $\fii$ defined at the beginning of subsection \ref{subsec:monotonehomomorphisms} to $\R_+^d$ through
\begin{equation}
\fii(x):=\Phi\big(\tilde{x}^T\boxplus x^T\big)-\fii(\tilde{x})
\end{equation}
for all $x\in\R_+^d$. Note that for $x=(x_1,\ldots,x_d)\in\R_+^d$, the vector $\tilde{x}$ has no vanishing entries, and hence 
\begin{equation}
\fii(\tilde{x})=\prod_{k\in{\rm supp}\,x}x_k^{\alpha_k}. 
\end{equation}
We see that $\fii$ coincides with its earlier definition on vectors in $\R_{>0}^d\cup\{(0,\ldots,0)\}$. Now we have, for all $x=(x_1,\ldots,x_k)\in\R_+^d$,
\begin{align}
\fii(x)&=\Phi\big(\tilde{x}^T\boxplus x^T\big)-\fii(\tilde{x})=\Phi\left(\tilde{x}^T\boxtimes\left(\begin{array}{ccc}
1&\cdots&1\\
&{\rm supp}\,x&
\end{array}\right)\right)-\fii(\tilde{x})\\
&=\fii(\tilde{x})\left(\Phi\left(\begin{array}{ccc}
1&\cdots&1\\
&{\rm supp}\,x&
\end{array}\right)-1\right)=\fii(\tilde{x})\big(\Phi[{\rm supp}\,x]-1\big)=\left\{\begin{array}{ll}
\fii(\tilde{x}),&C\subseteq {\rm supp}\,x,\\
0&{\rm otherwise}
\end{array}\right.\label{eq:Phihajotus}
\end{align}
where the last equality is due to the fact that $\Phi[B]=2$ if and only if $C\subseteq B$ and $\Phi[B]=1$ otherwise.

Let $P=\big(p^{(1)},\ldots,p^{(d)}\big)\in\mc V^d$, $p^{(k)}=\big(p^{(k)}_1,\ldots,p^{(k)}_n\big)$ for $k=1,\ldots,d$. Define $p_i:=\big(p^{(1)}_i,\ldots,p^{(d)}_i\big)$ for $i=1,\ldots,n$. We may write
\begin{align}
\Phi(P)&=\Phi\left(\bigboxplus_{i=1}^n p_i^T\right)=\Phi\left(\bigboxplus_{i=1}^n \big(\tilde{p}_i^T\boxplus p_i^T\big)\right)-\sum_{i=1}^n\fii(\tilde{p}_i)\label{eq:PhisumsCrowsa}\\
&=\sum_{i=1}^n\left(\Phi\big(\tilde{p}_i^T\boxplus p_i^T\big)-\fii(\tilde{p}_i)\right)=\sum_{i=1}^n \fii(p_i)=\sum_{i:\,C\subseteq{\rm supp}\,p_i}\prod_{k\in{\rm supp}\,p_i}\big(p^{(k)}_i\big)^{\alpha_k}.\label{eq:PhisumsCrowsb}
\end{align}
It follows from Lemma \ref{lem:1or2}(ii) that if $k\notin C$, then $\alpha_k=0$. 

Thus, the form of $\Phi$ is the same as those in $\mathcal{H}^{\rm n.d.}(S^d,\R^{\rm op}_+)$ but with $\underline{\alpha}$ not necessarily in $A_+$. We already know that those within $\mathcal{H}^{\rm n.d.}(S^d,\R_+^{\rm op})$ are monotone in the case $\mathbb{K}=\R_+^{\rm op}$. We know from \cite{farooq2024} that, if $\underline{\alpha}\in A_-\setminus\{e_1,\ldots,e_d\}$, then $\Phi$ is monotone in the case $\mathbb{K}=\R_+$ when restricted on matrices whose columns have common support. Thus, in the case $\mathbb{K}=\R_+^{\rm op}$, the nondegenerate monotone homomorphisms are exactly those in $\mathcal{H}^{\rm n.d.}(S^d,\R^{\rm op})$. We next show that there are actually no nondegenerate monotone homomorphisms in the case $\mathbb{K}=\R_+$.

Assume $\mathbb{K}=\R_+$. It follows from \eqref{eq:PhisumsCrowsa}, \eqref{eq:PhisumsCrowsb} that 
\begin{equation}\label{eq:Phisinglex}
\Phi\left(\begin{array}{ccccccc}
1&\cdots&1&1&1&\cdots&1\\
0&\cdots&0&\smash{\underbrace{x}_{k\text{-th entry}}}&0&\cdots&0
\end{array}\right) 
\vspace{0.5cm}
\end{equation}
equals either $1+x$ or $1$ for all $x\geq0$ depending on whether $C=\{k\}$, respectively $C\neq\{k\}$. Furthermore, in the former case, \eqref{eq:Phisinglex} equals $1$ for all other $k^\prime\neq k$, and since $k^\prime\notin C$, $\alpha_{k^\prime}=0$ for all $k^\prime\neq k$, and hence $\alpha_k=1$. 


Let $P=(p^{(1)},\ldots,p^{(d)})\in\mc V_n^d$. Notice the following inequalities, which follow from monotonicity, and for the second inequality by inserting a row of $1$'s for every non-zero entry in $P$ and putting every entry on its own row: 
\begin{equation}\label{eq:Phiis0}
\Phi(\|p^{(1)}\|_1\;\cdots\;\|p^{(d)}\|_1)\leq\Phi(P)\leq\sum_{i=1}^n\sum_{k=1}^d\left(\Phi\left(\begin{array}{ccccccc}
1&\cdots&1&1&1&\cdots&1\\
0&\cdots&0&\smash{\underbrace{p_i^{(k)}}_{k\text{-th entry}}}&0&\cdots&0 
\end{array}\right)-1\right). 
\vspace{0.5cm}
\end{equation}
Assume \eqref{eq:Phisinglex} equals $1+x$ for some $k$. Then it equals $1$ for all $k^\prime\neq k$, and the RHS equals $\|p^{(k)}\|_1$. Also, since in this case $\alpha_k=1$ and $\alpha_{k^\prime}=0$ for all $k^\prime\neq k$, the LHS is equal to $\|p^{(k)}\|_1$. Hence $\Phi(P)=\|p^{(k)}\|_1$ for all $P$, meaning that $\Phi$ is degenerate. Assume \eqref{eq:Phisinglex} equals $1$ for all $k$. Then it follows from the right inequality in \eqref{eq:Phiis0} that $\Phi(P)=0$ for all $P$, which is not possible. In conclusion, no non-degenerate monotone homomorphism $S^d\to\R_+$ exists. 
		
Next, we consider the cases $\mathbb{K}$ equal to $\T\R_+$ or $\T\R_+^{\rm op}$. Let $\Phi:S^d\to\mathbb{K}$, where $\mathbb{K}\in\{\T\R_+,\T\R_+^{\rm op}\}$, be a non-degenerate monotone homomorphism, and $\underline{\beta}=(\beta_1,\ldots,\beta_d)\in B$ the parameter vector which $\Phi$ is associated with.

Let $\ell\in[d]$ and define $B_\ell:=\{\ell\}$ and $x^{(\ell)}:=(0,\ldots,x,\ldots,0)$ for some $x>0$. Then it follows from Lemma \ref{lem:1or2}(i) that 
\begin{equation}
\Phi\big(\big(\widetilde{x^{(\ell)}}\big)^T\boxplus \big(x^{(\ell)}\big)^T\big)=\Phi\left(\big(\widetilde{x^{(\ell)}}\big)^T\boxtimes\left(\begin{array}{ccc}
1&\cdots&1\\
&B_\ell&
\end{array}\right)\right)=\Phi\big((\widetilde{x^{(\ell)}}\big)^T)\Phi[B_\ell]=\Phi\big((\widetilde{x^{(\ell)}}\big)^T). 
\end{equation}

Consider now the case $\mathbb{K}=\T\R_+$. Let $P\in\V_n^d$, $P\neq0$. By inserting rows of $1$'s and placing each non-zero entry in $P$ on its own row, we have 
\begin{equation}
P\boxplus\bigboxplus_{i,k\,:\,p_i^{(k)}>0}(1\,\cdots\,1\,p_i^{(k)}\,1\,\cdots\,1)\preceq\bigboxplus_{i,k\,:\,p_i^{(k)}>0}\left(\begin{array}{ccccccc}
1&\cdots&1&p_i^{(k)}&1&\cdots&1\\[0.1cm]
0&\cdots&0&\smash{\underbrace{p_i^{(k)}}_{k\text{-th entry}}}&0&\cdots&0 
\end{array}\right). 
\vspace{0.5cm}
\end{equation}
Assume all $\beta_k=0$.	Then from the majorization above and the fact that we know the value of $\Phi$ on single rows \eqref{eq:singlerowfunction} it follows that 
\begin{equation}
\Phi(P)\leq\max\{\Phi(P),1\}\leq1. 
\end{equation}
Since also $1=\Phi(\|p^{(1)}\|_1\;\cdots\;\|p^{(d)}\|_1)\leq\Phi(P)$, we find that $\Phi(P)=1$. Hence $\Phi$ is degenerate, which leads to a contradiction. Thus there exists $k$ such that $\beta_k>0$. Note that 
\begin{equation}
\left(\begin{array}{cc}
1&\frac{1}{2}\\[0.1cm]
0&\frac{1}{2}
\end{array}\right)\left(\begin{array}{ccccccc}
\frac{3}{2}&\cdots&\frac{3}{2}&1&\frac{3}{2}&\cdots&\frac{3}{2}\\[0.1cm]
0&\cdots&0&\smash{\underbrace{1}_{k\text{-th entry}}}&0&\cdots&0
\end{array}\right)=\left(\begin{array}{ccccccc}
\frac{3}{2}&\cdots&\frac{3}{2}&\frac{3}{2}&\frac{3}{2}&\cdots&\frac{3}{2}\\[0.1cm]
0&\cdots&0&\smash{\underbrace{\textstyle\frac{1}{2}}_{k\text{-th entry}}}&0&\cdots&0
\end{array}\right). 
\vspace{0.5cm}
\end{equation}
Using this and the fact that $\sum_{k^\prime\neq k}\beta_{k^\prime}<0$ it follows that
\begin{align}\label{eq:tropicalhomomorphismcontradiction}
1>\left(\frac{3}{2}\right)^{\sum_{k^\prime\neq k}\beta_{k^\prime}}&=\Phi\left(\left(\frac{3}{2}\;\cdots\;\frac{3}{2}\;\smash{\underbrace{1}_{k\text{-th entry}}}\;\frac{3}{2}\;\cdots\;\frac{3}{2}\right)\boxtimes\left(\begin{array}{ccccccc}
1&\cdots&1&1&1&\cdots&1\\[0.1cm]
0&\cdots&0&\smash{\underbrace{1}_{k\text{-th entry}}}&0&\cdots&0
\end{array}\right)\right)\\\nonumber\\
&=\Phi\left(\begin{array}{ccccccc}
\frac{3}{2}&\cdots&\frac{3}{2}&1&\frac{3}{2}&\cdots&\frac{3}{2}\\[0.1cm]
0&\cdots&0&\smash{\underbrace{1}_{k\text{-th entry}}}&0&\cdots&0
\end{array}\right)\\\nonumber\\
&\geq\Phi\left(\begin{array}{ccccccc}
\frac{3}{2}&\cdots&\frac{3}{2}&\frac{3}{2}&\frac{3}{2}&\cdots&\frac{3}{2}\\[0.1cm]
0&\cdots&0&\smash{\underbrace{\textstyle\frac{1}{2}}_{k\text{-th entry}}}&0&\cdots&0
\end{array}\right)\\\nonumber\\
&=\Phi\left(\left(\frac{3}{2}\;\cdots\;\frac{3}{2}\right)\boxtimes\left(\begin{array}{ccccccc}
1&\cdots&1&1&1&\cdots&1\\[0.1cm]
0&\cdots&0&\smash{\underbrace{\textstyle\frac{1}{3}}_{k\text{-th entry}}}&0&\cdots&0
\end{array}\right)\right)\\\nonumber\\
&\geq\Phi\left(\frac{3}{2}\;\cdots\;\frac{3}{2}\right)\Phi\left(1\;\cdots\;1\;\smash{\underbrace{\textstyle\frac{4}{3}}_{k\text{-th entry}}}\;1\;\cdots\;1\right)=\left(\frac{4}{3}\right)^{\beta_k}>1, \\\nonumber
\end{align}
which leads to a contradiction. We conclude that no non-degenerate monotone homomorphism $S^d\to\T\R_+$ exists. 

Finally, consider the case $\mathbb{K}=\T\R_+^{\rm op}$. Let $P\in\V_n^d$, $P\neq0$. $P$ has at least one row not containing any $0$. Assume WLOG it is the first row. Then 
\begin{equation}
P\preceq\left(\begin{array}{ccc}
p_1^{(1)}/2&\cdots&p_1^{(d)}/2\\
p_1^{(1)}/2&\cdots&p_1^{(d)}/2\\
p_2^{(1)}&\cdots&p_2^{(d)}\\
\vdots&\vdots&\vdots\\
p_n^{(1)}&\cdots&p_n^{(d)}
\end{array}\right), 
\end{equation}
hence
\begin{equation}
\Phi(P)\geq\max\left\{\Phi\left(p_1^{(1)}/2\;\cdots\;p_1^{(d)}/2\right),\Phi\left(\begin{array}{ccc}
p_1^{(1)}/2&\cdots&p_1^{(d)}/2\\
p_2^{(1)}&\cdots&p_2^{(d)}\\
\vdots&\vdots&\vdots\\
p_n^{(1)}&\cdots&p_n^{(d)}
\end{array}\right)\right\}. 
\end{equation}	
Assume all $\beta_k=0$. Then it follows from the above and \eqref{eq:singlerowfunction} that $\Phi(P)\geq1$. Since also $1=\Phi(\|p^{(1)}\|_1\;\cdots\;\|p^{(d)}\|_1)\geq\Phi(P)$, we find that $\Phi(P)=1$. Hence $\Phi$ is degenerate, which leads to a contradiction. Thus there exists $k$ such that $\beta_k<0$. Then we can perform the same calculations as in \eqref{eq:tropicalhomomorphismcontradiction} but now with $\beta_k<0$ and all the inequalities reversed. This leads to a contradiction. Hence there exists no non-degenerate monotone homomorphism $S^d\to\T\R_+^{\rm op}$.
\end{proof}
	
\subsection{Monotone derivations}
	
In this section, we identify the monotone derivations of the degenerate monotone homomorphisms on $S^d$, up to \emph{interchangeability}: if the difference between two $\Phi$-derivations factors through $S^d/\sim$, then they are called \emph{interchangeable} and we only need to consider one of these derivations in the Vergleichsstellensatz (see Definition 8.1 and the subsequent discussion in \cite{fritz2022}). 

Recall that the degenerate monotone homomorphisms are exactly the $d$ $1$-norms of the columns $P\mapsto \|p^{(k)}\|_1$ for every $k\in[d]$. It will turn out that up to interchangeability, the only monotone derivation $\Delta$ is $\Delta(P)=0$ for all $P\in S^d$. 

\begin{proposition}
\label{pro:derivations}
There is a unique monotone derivation $\Delta$, up to interchangeability, for any degenerate monotone homomorphism $S^d\rightarrow\R_+$, and satisfies $\Delta(P)=0$ for all $P\in S^d$. 
\end{proposition}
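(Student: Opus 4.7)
The degenerate monotone homomorphisms $S^d \to \R_+$ are exactly $\Phi^{(k)}(P) = \|p^{(k)}\|_1$ for $k \in [d]$. Fix such a $k$ and let $\Delta$ be a monotone $\Phi^{(k)}$-derivation. My plan is to (i) pin down $\Delta$ on positive single-row matrices via the Leibniz rule, (ii) absorb the $c_k$-component using interchangeability, (iii) eliminate the remaining $c_j$-components for $j \neq k$ via monotonicity, and (iv) extend the resulting vanishing to all of $S^d$.

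For (i), iterative application of Leibniz to the factorization $(x_1, \ldots, x_d) = (x_1, 1, \ldots, 1) \boxtimes (1, x_2, 1, \ldots, 1) \boxtimes \cdots \boxtimes (1, \ldots, 1, x_d)$, combined with the observation that $\Phi^{(k)}$ picks out only the $k$-th coordinate, yields
\begin{equation}
\Delta((x_1, \ldots, x_d)) = c_k(x_k) + x_k \sum_{j \neq k} c_j(x_j),
\end{equation}
where $c_k : \R_+ \to \R$ is an additive Leibniz derivation on $\R_+$ and each $c_j$ (for $j \neq k$) satisfies the multiplicatively-additive Cauchy equation $c_j(xy) = c_j(x) + c_j(y)$. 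For step (ii), $c_k$ is itself an additive derivation on $\R_+$, so subtracting $c_k \circ \Phi^{(k)}$ from $\Delta$ produces an interchangeable derivation $\Delta'$ whose values on positive single rows reduce to $\Delta'((x_1, \ldots, x_d)) = x_k \sum_{j \neq k} c_j(x_j)$.

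For step (iii), I would exploit two monotonicity constraints for each $j \neq k$. Fixing $\epsilon \in (0, 1)$, the stochastic matrix $\begin{pmatrix}\epsilon & 1-\epsilon \\ 1-\epsilon & \epsilon\end{pmatrix}$ realizes the majorization
\begin{equation}
\begin{pmatrix} 1 & \cdots & 1 & \cdots & 1 \\ 1 & \cdots & 0 & \cdots & 1 \end{pmatrix} \rgeq \begin{pmatrix} 1 & \cdots & \epsilon & \cdots & 1 \\ 1 & \cdots & 1-\epsilon & \cdots & 1 \end{pmatrix},
\end{equation}
where the zero and the perturbations $\epsilon, 1-\epsilon$ all sit in column $j$. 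The right-hand matrix has all-positive entries, so by additivity $\Delta'(\text{RHS}) = c_j(\epsilon) + c_j(1-\epsilon) = c_j(\epsilon(1-\epsilon))$, whence monotonicity forces $\Delta'(\text{LHS}) \leq c_j(\epsilon(1-\epsilon))$. Letting $\epsilon \to 0^+$ drives the right-hand side to $-\infty$ in the standard form $c_j(x) = \kappa_j \log x$ whenever $\kappa_j > 0$, giving a contradiction and forcing $\kappa_j \leq 0$. Symmetrically, the uniform bistochastic $\frac{1}{2}\begin{pmatrix}1 & 1 \\ 1 & 1\end{pmatrix}$ sends $\begin{pmatrix} 1 & \cdots & x_j & \cdots & 1 \\ 1 & \cdots & 2-x_j & \cdots & 1 \end{pmatrix}$ to the all-ones $2 \times d$ matrix, yielding $c_j(x_j(2-x_j)) \leq 0$ for all $x_j \in (0, 2)$; since $x_j(2-x_j) \leq 1$, this forces $\kappa_j \geq 0$. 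Combining, $c_j \equiv 0$, so $\Delta'$ vanishes identically on every positive single-row matrix.

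For step (iv), matrices $P \in S^d$ with all-positive entries decompose $\boxplus$-additively into positive single rows, each contributing zero, so $\Delta'(P) = 0$ immediately. The main obstacle is matrices $P$ containing partial-zero rows, which are $\boxplus$-indecomposable within $S^d$ because the single-row slices with vanishing entries are themselves not in the semiring. The inequality $P \rgeq (\|p^{(1)}\|_1, \ldots, \|p^{(d)}\|_1)$ combined with $\Delta'$ vanishing on single rows already gives the upper bound $\Delta'(P) \leq 0$; for the matching lower bound I plan to use the Leibniz identity $\Delta'(P \boxtimes R) = \|R^{(k)}\|_1 \Delta'(P)$ with an all-positive $R$ (where the $\Delta'(R)$ term vanishes), combined with majorization relations between $P \boxtimes R$ and suitably refined all-positive matrices, in order to propagate triviality back to $P$. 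Modulo interchangeability, $\Delta$ is therefore the zero derivation.
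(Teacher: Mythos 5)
Your steps (i)--(iii) are a plausible alternative to the paper's route (which instead pins down the values of $\Delta$ on the relevant two-row gadgets via the algebraic identity behind Lemma \ref{lem:derivationszero}), but two points need repair before they stand. First, to use $c_k$ as the interchanging derivation $\delta$ you must verify that $c_k(x+y)=c_k(x)+c_k(y)$; this is not automatic, since $(1,\ldots,x,\ldots,1)\boxplus(1,\ldots,y,\ldots,1)$ collapses to $(2,\ldots,x+y,\ldots,2)$ and not to $(1,\ldots,x+y,\ldots,1)$, so the merge/split argument does not apply. The paper avoids this by taking $\delta(x)=-\Delta(x\,\cdots\,x)$ on \emph{constant} rows, where additivity does follow (Lemma \ref{lem:interchangeability}); you should do the same and then rederive your single-row formula under that normalization. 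Second, your monotonicity direction is reversed throughout: in this paper $P\rgeq Q$ implies $\Delta(P)\geq\Delta(Q)$, so your two constraints in step (iii) yield $\kappa_j\geq0$ and $\kappa_j\leq0$ respectively, rather than the other way around; the conclusion $c_j\equiv0$ survives, but you also silently assume $c_j(x)=\kappa_j\log x$, which for a solution of $c_j(xy)=c_j(x)+c_j(y)$ requires a regularity argument (your own bounds supply one, since a multiplicatively additive function bounded on one side on an interval of positive measure must be a multiple of $\log$, but this has to be said).

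The genuine gap is step (iv). With the correct sign convention, collapsing $P$ onto its row of column sums gives the \emph{lower} bound $\Delta'(P)\geq0$ essentially for free once positive single rows are handled; what is missing is the \emph{upper} bound for matrices containing partial-zero rows. Your proposed device, $\Delta'(P\boxtimes R)$ with an all-positive $R$, cannot work: the Kronecker product preserves the zero pattern ($p^{(k)}_ir^{(k)}_j=0$ whenever $p^{(k)}_i=0$), so $P\boxtimes R$ is never all-positive and admits no majorization relation to all-positive matrices of the kind you would need. The paper's resolution is a construction absent from your plan: each row $p_i$ of $P$ is reassembled by \emph{merging} the $d$ single-support rows $p^{(\ell)}_ie_\ell$, each of which is smuggled into the semiring as the second row of the two-row element $\big(p^{(\ell)}_i\,\cdots\,p^{(\ell)}_i\big)\boxtimes C_\ell$, whose $\Delta'$-value is $0$ by the Leibniz rule together with $\Delta'(C_\ell)=0$. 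Box-summing these blocks over $i$ and $\ell$ yields a matrix $P'$ with $\Delta'(P')=0$ and $P'\rgeq P\boxplus(\text{constant rows})$, whence $\Delta'(P)\leq0$. (The paper obtains the lower bound by a companion padding of $P$ with such blocks until all columns have equal $1$-norm, so the collapse lands on a constant row.) Without this gadget, or an equivalent device for rows with vanishing entries, your argument does not close.
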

 
Before proving this Proposition, we will use interchangeability to show that we only need to consider a certain type of derivations, namely those derivations $\Delta$ satisfying the vanishing property $\Delta(x,\ldots,x)=0$ for all $x\geq0$, as stated by the Lemma below. For the derivations of the semiring of matrices with common support studied in \cite{farooq2024}, the vanishing property was only shown for $x\in\Q$, $x\geq0$. However, one could show that it does in fact hold for all $x\geq0$ analogously to the proof of the Lemma below. 

\begin{lemma}
\label{lem:interchangeability}
Up to interchangeability, we may assume that any monotone derivation $\Delta$ of a degenerate monotone homomorphism satisfies 
\begin{equation}
\Delta(x\,\cdots\,x)=0 
\end{equation}
for any $x\geq0$. 
\end{lemma}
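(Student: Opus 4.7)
The plan is to exploit the interchangeability relation with an additive derivation $\delta\colon\R_+\to\R$ that precisely cancels $\Delta$ on elements of the form $(x\,\cdots\,x)$. Concretely, given a monotone derivation $\Delta$ at one of the degenerate homomorphisms $\Phi^{(k)}(P)=\|p^{(k)}\|_1$, I would first define the function $f\colon\R_+\to\R$ by $f(x):=\Delta(x\,\cdots\,x)$, where for $x>0$ we view $(x\,\cdots\,x)$ as a one-row element of $S^d$, and $f(0):=\Delta(0)=0$ (which follows from $\Delta(0)=\Delta(0+0)=2\Delta(0)$). The whole lemma then reduces to showing that $f$ is an additive derivation on $\R_+$ in the sense of the paper, because then $\delta:=-f$ is also one, and the interchangeable derivation
\[
\Delta'(P):=\Delta(P)+\delta\big(\Phi^{(k)}(P)\big),
\]
whose monotonicity, additivity and Leibniz property were verified in the discussion preceding Theorem \ref{thm:Fritz2022}, satisfies $\Delta'(x\,\cdots\,x)=f(x)+\delta(x)=f(x)-f(x)=0$.

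For additivity of $f$, the key observation is that the one-row matrix $\big((x+y)\,\cdots\,(x+y)\big)$ and the two-row matrix $(x\,\cdots\,x)\boxplus(y\,\cdots\,y)$ have identical column sums and are in fact mutually majorized: the $1\times 2$ stochastic matrix $(1\ 1)$ merges the two rows in one direction, while the $2\times 1$ stochastic column $\big(\tfrac{x}{x+y},\tfrac{y}{x+y}\big)^T$ splits the single row in the other (these are the two maps implicit in the observation, made right after the definition of $S^d$, that elements of equal norm are $\sim$-equivalent). Applying monotonicity of $\Delta$ in both directions forces $\Delta$ to take the same value on the two matrices, and additivity of $\Delta$ with respect to $\boxplus$ then gives $f(x+y)=f(x)+f(y)$. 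For the Leibniz property of $f$, the identity
\[
(xy\,\cdots\,xy)=(x\,\cdots\,x)\boxtimes(y\,\cdots\,y)
\]
is immediate from the definition of $\boxtimes$, and $\Phi^{(k)}(x\,\cdots\,x)=x$ since the $k$-th column has $1$-norm $x$. Substituting these into the Leibniz rule for $\Delta$ yields $f(xy)=f(x)\,y+x\,f(y)$, completing the proof that $f$ is an additive derivation on $\R_+$.

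I do not anticipate any genuine obstacle here beyond making the $\sim$-equivalence step above explicit: the interchangeability machinery from \cite{fritz2022} is tailored precisely to absorb corrections of this form, and once $f$ has been seen to satisfy both the additivity and the Leibniz identities, the subtraction $\delta=-f$ mechanically produces the required interchangeable derivation. Note that the same construction works uniformly for every $k\in[d]$, giving the claim simultaneously for all $d$ degenerate monotone homomorphisms, and the argument never uses any special feature of $S^d$ beyond the rules governing $\boxplus$ and $\boxtimes$ and the $1$-norm-preservation of stochastic maps, so it applies equally well to the semirings considered in \cite{farooq2024}.
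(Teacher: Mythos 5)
Your proposal is correct and coincides with the paper's own argument: the paper likewise sets $\delta(x)=-\Delta(x\,\cdots\,x)$, verifies additivity via the mutual majorization of $(x+y\,\cdots\,x+y)$ with the stacked matrix $(x\,\cdots\,x)\boxplus(y\,\cdots\,y)$ and the Leibniz rule via $(x\,\cdots\,x)\boxtimes(y\,\cdots\,y)$, and then passes to the interchangeable derivation $\Delta'(P)=\Delta(P)+\delta\big(\|p^{(k)}\|_1\big)$. No gaps.
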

	
\begin{proof}
Let $\Delta$ be a monotone derivation of the degenerate monotone homomorphism $\Phi^{(k)}:P\mapsto \|p^{(k)}\|_1$ for some $k\in[d]$. Define the function $\delta:\R_+\to\R$ through $\delta(x)=-\Delta(x\,\cdots\,x)$. This is a derivation at the identity function. Let us prove this. For the additivity and the Leibniz property, consider $x,y\in\R_+$. Note that
\begin{equation}
\left(\begin{array}{ccc}
x&\cdots&x\\
y&\cdots&y
\end{array}\right)\rgeq \big(x+y\,\cdots\,x+y\big)\rgeq \left(\begin{array}{ccc}
x&\cdots&x\\
y&\cdots&y
\end{array}\right).
\end{equation}
Thus,
\begin{align}
\delta(x+y)&=-\Delta\left(\begin{array}{ccc}
x&\cdots&x\\
y&\cdots&y
\end{array}
\right)=-\Delta(x\,\cdots\,x)-\Delta(y\,\cdots\,y)=\delta(x)+\delta(y).
\end{align}
Moreover,
\begin{align}
\delta(xy)&=-\Delta\big((x\,\cdots\,x)\boxtimes(y\,\cdots\,y)\big)=-\Delta(x\,\cdots\,x)y-x\Delta(y\,\cdots\,y)=\delta(x)y+x\delta(y).
\end{align}

Let us define the monotone derivation $\Delta'$ at $\Phi^{(k)}$ as in \ref{eq:interchangeable}, i.e.,
\begin{equation}
\Delta'(P)=\Delta(P)+\delta\big(\|p^{(k)}\|_1\big).
\end{equation}
Since $\Delta$ and $\Delta'$ are interchangeable, we may replace $\Delta$ with $\Delta'$. We now have, for all $x\geq0$,
\begin{equation}
\Delta'(x\,\cdots\,x)=\Delta(x\,\cdots\,x)+\delta(x)=\Delta(x\,\cdots\,x)-\Delta(x\,\cdots\,x)=0.
\end{equation}
\end{proof}

\begin{remark}
According to Lemma \ref{lem:interchangeability}, up to interchangeability, any monotone derivation $\Delta:S^d\to\R$ has the vanishing property $\Delta(p,\ldots,p)=0$ for any $p\in\R_+^n$, $n\in\N$. Indeed, denoting $p=(p_1,\ldots,p_n)$, we have
\begin{equation}
\Delta(p,\ldots,p)=\sum_{i=1}^n \Delta(p_i\,\cdots\,p_i)=0.
\end{equation}
This same result also applies to the results of \cite{farooq2024} where the assumption that $\|p\|_1\in\Q_+$ was made to guarantee this vanishing property.
\end{remark}

Recall that the degenerate monotone homomorphisms are all $\Phi^{(k)}$ for $k\in[d]$. Let $\Delta:S^d\to\mathbb{\R}$ be a monotone $\Phi^{(k)}$-derivation for some $k\in[d]$. For any $B\subseteq[d]$ we write 
\begin{equation}
    \Delta[B]:=\Delta\left(\begin{array}{ccc}
            1&\cdots&1\\
            &B&
        \end{array}\right). 
\end{equation}
The following Lemma will be used in the proof of Proposition \ref{pro:derivations}. 
\begin{lemma}
    \label{lem:derivationszero}
    Let $B\subseteq[d]$ and $\Delta:S^d\to\mathbb{\R}$ be a monotone $\Phi^{(k)}$-derivation for some $k\in[d]$. Then $\Delta[B]=0$. 
\end{lemma}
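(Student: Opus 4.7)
My plan is to derive $\Delta[B]=0$ directly from the algebraic identity already exploited in the proof of Lemma~\ref{lem:1or2}, namely equation~\eqref{eq:basicmatrixequation}, by combining the Leibniz rule with the vanishing property established in the Remark following Lemma~\ref{lem:interchangeability}. The key preliminary observation is that the all-ones two-row matrix
\begin{equation}
E:=\left(\begin{array}{ccc}1&\cdots&1\\1&\cdots&1\end{array}\right)
\end{equation}
has all columns equal, so it is of the form $(p,\ldots,p)$ with $p=(1,1)\in\R_+^2$, and the vanishing property gives $\Delta(E)=0$ at every degenerate monotone homomorphism.

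Writing
\begin{equation}
M_B:=\left(\begin{array}{ccc}1&\cdots&1\\&B&\end{array}\right),
\end{equation}
I first compute $\Phi^{(k)}(M_B)$ as the sum of the $k$-th column of $M_B$, which equals $2$ when $k\in B$ and $1$ when $k\notin B$. Next I apply $\Delta$ to both sides of~\eqref{eq:basicmatrixequation}; using additivity and the Leibniz rule on the $\boxtimes$-square on the left, this yields
\begin{equation}
2\,\Phi^{(k)}(M_B)\,\Delta(M_B)+\Delta(E)=3\,\Delta(M_B),
\end{equation}
which, after using $\Delta(E)=0$, reduces to $\bigl(2\Phi^{(k)}(M_B)-3\bigr)\Delta[B]=0$. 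The coefficient lies in $\{-1,+1\}$ regardless of whether $k\in B$, so I conclude $\Delta[B]=0$.

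I do not anticipate a serious obstacle; the argument is essentially mechanical once the right identity is in hand. It is uniform over $B\subseteq[d]$, absorbing the degenerate cases $B=\emptyset$ (where $M_B$ reduces to a single all-ones row after discarding the zero row) and $B=[d]$ (where $M_B=E$ and $\Delta[B]=0$ already follows from the vanishing property). The only point meriting care is that~\eqref{eq:basicmatrixequation} is an honest equality in $S^d$, not merely a double majorization, so that $\Delta$ takes identical values on both sides; this is immediate because the two sides coincide as multisets of rows after removing the rows of zeros.
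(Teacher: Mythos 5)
Your proof is correct and is essentially identical to the paper's: both apply $\Delta$ to the identity \eqref{eq:basicmatrixequation}, use the Leibniz rule together with the vanishing of $\Delta$ on equal-column matrices (Lemma \ref{lem:interchangeability}), and observe that the resulting linear equation $2\Phi^{(k)}[B]\Delta[B]=3\Delta[B]$ forces $\Delta[B]=0$ whether $\Phi^{(k)}[B]$ is $1$ or $2$. No gaps.
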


\begin{proof}
Recall the equality of matrices given in \eqref{eq:basicmatrixequation}. Using the Leibniz rule and additivity of $\Delta$, and Lemma \ref{lem:interchangeability}, we find 
    \begin{equation}
        2\Phi^{(k)}[B]\Delta[B]=3\Delta[B], 
    \end{equation}
    where $\Phi^{(k)}[B]=2$, respectively $1$, depending on whether $k\in B$, respectively $k\notin B$. In both cases, the only solution is $\Delta[B]=0$. 
\end{proof}

We now prove Proposition \ref{pro:derivations}. 
	
\begin{proof}[Proof of Proposition \ref{pro:derivations}]
Fix $k\in[d]$ and assume $\Delta:S^d\to\R$ is a monotone $\Phi^{(k)}$-derivation. For $\ell=1,\ldots,d$ define
\begin{equation}
    C_\ell:=\left(\begin{array}{ccccccc}
        1&\cdots&1&1&1&\cdots&1\\[0.1cm]
        0&\cdots&0&\smash{\underbrace{1}_{\ell\text{-th entry}}}&0&\cdots&0
    \end{array}\right). 
    \vspace{0.5cm}
\end{equation}
According to Lemma \ref{lem:derivationszero}, $\Delta(C_\ell)=0$ for $\ell=1,\ldots,d$. Let us consider $P=\big(p^{(1)},\ldots,p^{(d)}\big)\in\mc V^d$ with $p^{(\ell)}=\big(p^{(\ell)}_1,\ldots,p^{(\ell)}_n\big)$. Define the matrix
\begin{equation}
P':=\bigboxplus_{i=1}^n\bigboxplus_{\ell=1}^d \big(p^{(\ell)}_i\,\cdots\,p^{(\ell)}_i\big)\boxtimes C_\ell.
\end{equation}
One easily sees that
\begin{equation}
P'\rgeq P'':=P\boxplus \bigboxplus_{i=1}^n\bigboxplus_{\ell=1}^d \big(p^{(\ell)}_i\,\cdots\,p^{(\ell)}_i\big).
\end{equation}
Since $\Delta(x\,\cdots\,x)=0$ for all $x\geq0$, we notice that $\Delta(P'')=\Delta(P)$ and, hence
\begin{align}
\Delta(P)&=\Delta(P'')\leq\Delta(P')=\sum_{i=1}^n\sum_{\ell=1}^d\Delta\left(\big(p^{(\ell)}_i\,\cdots\,p^{(\ell)}_i\big)\boxtimes C_\ell\right)\label{eq:Deltais0a}\\
&=\sum_{i=1}^n\sum_{\ell=1}^d\Big(\underbrace{\Delta\big(p^{(\ell)}_i\,\cdots\,p^{(\ell)}_i\big)}_{=0}\Phi^{(k)}(C_\ell)+\Phi^{(k)}\big(p^{(\ell)}_i\,\cdots\,p^{(\ell)}_i\big)\underbrace{\Delta(C_\ell)}_{=0}\Big)=0.\label{eq:Deltais0b}
\end{align}
Thus, $\Delta$ is nonpositive.
  
We finish the proof by showing that $\Delta$ is also nonnegative. Define $M:=\max_{1\leq\ell\leq d}\|p^{(\ell)}\|_1$, $m_\ell:=M-\|p^{(\ell)}\|_1$ for $\ell=1,\ldots,d$, and $m:=m_1+\cdots+m_d$. Consider
\begin{equation}
\tilde{P}:=P\boxplus\bigboxplus_{\ell=1}^d\big(m_\ell\,\cdots\,m_\ell\big)\boxtimes C_\ell, 
\end{equation}
which has columns that have the same $1$-norm $M+m$ as one easily verifies. By monotonicity, we find 
\begin{align}
0&=\Delta(M+m\,\cdots\,M+m)\leq \Delta(\tilde{P})=\Delta(P)+\sum_{\ell=1}^d\underbrace{\Delta\Big(\big(m_\ell\,\cdots\,m_\ell\big)\boxtimes C_\ell\Big)}_{=0}=\Delta(P),
\end{align}
where the terms in the sum vanish according to a calculation analogous to \eqref{eq:Deltais0a}, \eqref{eq:Deltais0b}.
\end{proof}

\begin{figure}
\begin{center}
\begin{overpic}[scale=0.43,unit=1mm]{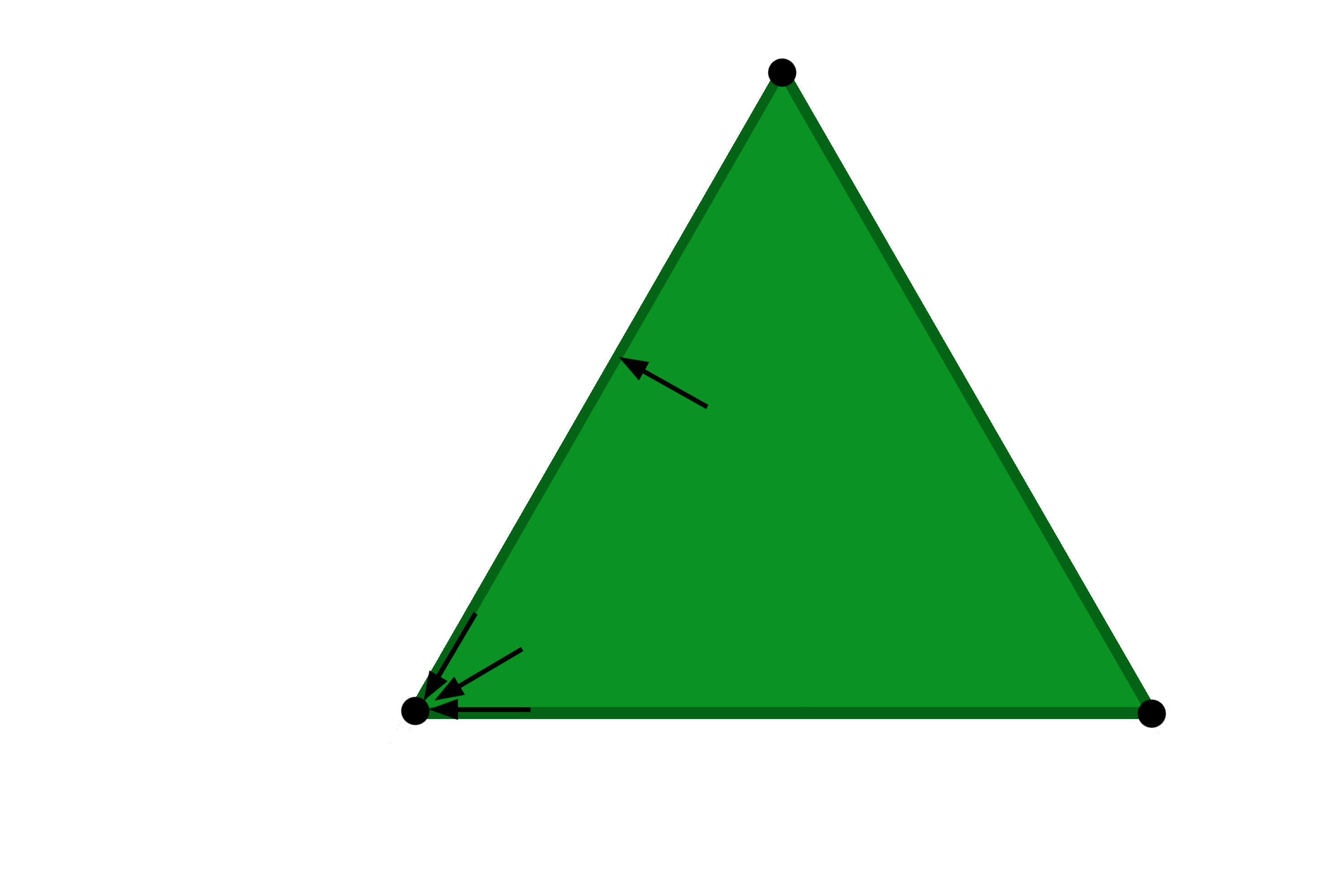}
\put(60,30){\begin{Huge}
$A_+$
\end{Huge}}
\put(32,11){\begin{Large}
$e_1$
\end{Large}}
\put(65,65){\begin{Large}
$e_3$
\end{Large}}
\put(95,13){\begin{Large}
$e_2$
\end{Large}}
\put(75,47){\begin{Large}
${\color{lighter} \Phi_{\underline{\alpha},\{1,2,3\}}}$
\end{Large}}
\put(23,46){
${\color{lighter} \Phi_{(\alpha,0,1-\alpha),\{1,2,3\}}}$
}
\put(12,13){
${\color{lighter} \Phi_{e_1,\{1,2,3\}}}$
}
\put(20,42){
${\color{darker} \Phi_{(\alpha,0,1-\alpha),\{1,3\}}}$
}
\put(17,18){
${\color{darker} \Phi_{e_1,\{1,3\}}}$
}
\put(27,7){
${\color{darker} \Phi_{e_1,\{1,2\}}}$
}
\put(0,6){
$\Phi_{e_1,\{1\}}=\Phi^{(1)}$
}
\end{overpic}
\caption{\label{fig:NoRestrictions} Illustration of the monotone homomorphisms of $S^3$, where the figure concentrates on the line segment from $e_1$ to $e_3$ and the vertex $e_1$: The monotone homomorphisms associated with $\underline{\alpha}$ in the interior of $A_+$ are $\Phi_{\underline{\alpha},\{1,2,3\}}$, depicted in lighter green. The smaller facets of $A_+$, i.e.,\ the three line segments and the singletons $\{e_1\}$, $\{e_2\}$, and $\{e_3\}$ inherit similar homomorphisms still with $C=\{1,2,3\}$, also depicted in lighter green. These homomorphisms can be seen as pointwise limits from the interior of $A_+$, when $\alpha_2\to0$. Additionally, the extreme point $e_1$ inherits the homomorphisms $\Phi_{e_1,\{1,3\}}$ and $\Phi_{e_1,\{1,2\}}$, depicted in darker green, as limits from the two line segments it connects, and similarly for $e_2$ and $e_3$. We also have the homomorphisms $\Phi_{(\alpha,0,1-\alpha),\{1,3\}}$, $\alpha\in[0,1]$, on the line segment between $e_1$ and $e_3$, depicted in darker green, and similarly for the other two line segments. At $e_1$, the degenerate homomorphism with $C=\{1\}$, i.e.,\ $\Phi^{(1)}$, is also present, depicted in black, and similarly for $e_2$ and $e_3$. There are no non-zero derivations at the three extreme points.}
\end{center}
\end{figure}

	\subsection{Power universal elements}\label{subsec:poweruniversalnorestr}
	
	We determine which matrices in $\V^d$ with columns that sum to $1$ are power universal. Recall that for large-sample or catalytic majorization of a matrix $P$ over $Q$, it is a requirement in the Vergleichsstellensatz (Theorem \ref{thm:Fritz2022}) that $P$ is power universal. 
	\begin{proposition}\label{lem:poweruniversalconditions}
		Let $U=(u^{(1)},\ldots,u^{(d)})\in \V^d$ with $u^{(k)}$ for $k\in[d]$ probability distributions. Then the following are equivalent: 
		\begin{enumerate}[(i)]
			\item $U$ is power universal. 
			\item There exists $m\in\N$ such that for all $k\in[d]$, $U^{\boxtimes m}$ has a row $(0\,\cdots\,0\underbrace{x_k}_{k\text{-th entry}}0\,\cdots\,0)$ for some $x_k>0$. 
			\item It holds that ${\rm supp}\,u^{(k)}\nsubseteq{\rm supp}\,u^{(k^\prime)}$ for all $k,k^\prime\in[d]$, $k\neq k^\prime$. 
			\item Any nondegenerate homomorphism $\Phi\in\mathcal{H}^{\rm n.d.}(S^d,\R_+^{\rm op})$ satisfies $\Phi(U)<1$. 
		\end{enumerate}
	\end{proposition}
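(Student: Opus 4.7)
The plan is to prove the cycle $\mathrm{(i)}\Rightarrow\mathrm{(iv)}\Rightarrow\mathrm{(iii)}\Rightarrow\mathrm{(ii)}\Rightarrow\mathrm{(i)}$. The implication $\mathrm{(ii)}\Rightarrow\mathrm{(iii)}$ outside this cycle is nearly trivial: by the column-wise definition of $\boxtimes$, a row of $U^{\boxtimes m}$ indexed by $(i_1,\ldots,i_m)$ has $\ell$-th entry $\prod_{j=1}^m u_{i_j}^{(\ell)}$, so a row whose only nonzero entry sits in column $k$ forces every $u_{i_j}^{(k)}>0$ and, for each $k'\neq k$, at least one $u_{i_j}^{(k')}=0$, exhibiting an index in ${\rm supp}\,u^{(k)}\setminus{\rm supp}\,u^{(k')}$. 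For $\mathrm{(i)}\Rightarrow\mathrm{(iv)}$, stochastic row summation shows $U\rgeq(1\,\cdots\,1)$ in $S^d$, so monotonicity into $\R_+^{\rm op}$ always yields $\Phi(U)\leq 1$. If $\Phi(U)=1$ for some nondegenerate $\Phi\in\mathcal{H}^{\rm n.d.}(S^d,\R_+^{\rm op})$, then for every $x\rleq y$, power universality produces $n$ with $y\rleq x\boxtimes U^{\boxtimes n}$, so
\begin{equation*}
\Phi(y)\geq\Phi(x\boxtimes U^{\boxtimes n})=\Phi(x)\Phi(U)^n=\Phi(x);
\end{equation*}
combined with $\Phi(x)\geq\Phi(y)$ coming from $x\rleq y$, this forces $\Phi(x)=\Phi(y)$ on every comparable pair, contradicting nondegeneracy.

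For $\mathrm{(iv)}\Rightarrow\mathrm{(iii)}$ I would argue by contrapositive. Assume ${\rm supp}\,u^{(k)}\subseteq{\rm supp}\,u^{(k')}$ for some $k\neq k'$. The homomorphism $\Phi_{e_k,\{k,k'\}}$ lies in $\mathcal{H}^{\rm n.d.}(S^d,\R_+^{\rm op})$ since its character $\{k,k'\}$ strictly contains ${\rm supp}\,e_k=\{k\}$, distinguishing it from the degenerate $\Phi^{(k)}$. A direct calculation using \eqref{eq:homomorphismsum}--\eqref{eq:Iset} gives
\begin{equation*}
\Phi_{e_k,\{k,k'\}}(U)=\sum_{i\in{\rm supp}\,u^{(k)}\cap{\rm supp}\,u^{(k')}}u_i^{(k)}=\sum_{i\in{\rm supp}\,u^{(k)}}u_i^{(k)}=1,
\end{equation*}
violating (iv). For $\mathrm{(iii)}\Rightarrow\mathrm{(ii)}$ I would build the special rows explicitly: for each $k\in[d]$ and $k'\neq k$, use (iii) to pick an index $i_{k,k'}$ with $u_{i_{k,k'}}^{(k)}>0$ and $u_{i_{k,k'}}^{(k')}=0$. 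The row of $U^{\boxtimes(d-1)}$ labelled by the multi-index $(i_{k,k'})_{k'\neq k}$ has $\ell$-th entry $\prod_{k'\neq k}u_{i_{k,k'}}^{(\ell)}$, which is strictly positive for $\ell=k$ and zero for each $\ell\neq k$ thanks to the factor $u_{i_{k,\ell}}^{(\ell)}=0$. Hence (ii) holds with $m=d-1$.

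The main obstacle is $\mathrm{(ii)}\Rightarrow\mathrm{(i)}$. Given $x\rleq y$ in $S^d$, the goal is to exhibit $N$ and a column-stochastic $S$ with $S(x\boxtimes U^{\boxtimes N})=y$. The leverage from (ii) is that for each row $\rho$ of $x$ and each $k\in[d]$, $x\boxtimes U^{\boxtimes m}$ contains a row whose only nonzero entry lies in column $k$ with value $\rho_k x_k$ (obtained by pairing $\rho$ with the special row $r_k$), and simultaneously contains full-support rows arising from pairing $\rho$ with a full-support row of $U^{\boxtimes m}$ (which exists since $U^{\boxtimes m}\in\V^d$). Iterating to $U^{\boxtimes Nm}$ enlarges both families exponentially and refines the attainable values on the single-column rows. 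One then plans to construct $S$ by using some of the full-support rows to realise a full-support row of $y$ (preserving the $\V^d$ baseline condition) and distributing the single-column rows to match every remaining entry of $y$ column by column, the global identities $\|x^{(k)}\|_1=\|y^{(k)}\|_1$ guaranteeing that a single column-stochastic $S$ is consistent. The hard part is the combinatorial bookkeeping: arranging these pieces so that one and the same column-stochastic matrix $S$ matches all entries of $y$ simultaneously.
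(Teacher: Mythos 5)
Your implications (i)$\Rightarrow$(iv), (iv)$\Rightarrow$(iii) and (iii)$\Rightarrow$(ii) are correct and essentially identical to the paper's arguments (the paper proves (iv)$\Rightarrow$(iii) directly rather than contrapositively, using that $\Phi_{e_k,\{k,k'\}}(U)<1$ forces $I(U,\{k,k'\})\subsetneq{\rm supp}\,u^{(k)}$, but this is the same computation). The problem is (ii)$\Rightarrow$(i), which is the substantive content of the proposition, and which you have only outlined: you yourself flag that "the hard part is the combinatorial bookkeeping," and that bookkeeping is precisely what is missing. Two concrete issues are left unresolved. First, the full-support rows of $x\boxtimes U^{\boxtimes N}$ carry \emph{correlated} mass across all $d$ columns that a stochastic $S$ cannot redistribute independently column by column; your plan of "using some of the full-support rows to realise a full-support row of $y$" does not explain why the residual correlated mass does not obstruct matching the remaining entries of $y$. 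Second, you give no quantitative reason why the single-column rows eventually carry \emph{enough} mass in each column to reconstruct an arbitrary target.

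The paper resolves both points with one quantitative observation that your sketch lacks: after reducing $\tilde U:=U^{\boxtimes m_0}$ to the canonical $(d+1)$-row form with one full-support row of column masses $\|\tilde u_0^{(k)}\|_1<1$ and $d$ single-column rows, the $m$-th power majorizes the analogous matrix $\tilde U_m$ whose full-support row has masses $\|\tilde u_0^{(k)}\|_1^m\to0$. Once these are smaller than the (strictly positive) entries of the common-support block $p_0^{(k)}$ of the target $P$, an explicit stochastic matrix $S_m$ (Equation for $S_m$ in the paper) dumps the entire full-support row into the common-support block of $P$ and tops it up from the single-column rows, whose leftover mass then builds the partially supported rows of $P$. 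A further simplification you miss: the paper proves $U^{\boxtimes N}\succeq P$ alone and then gets $Q\boxtimes U^{\boxtimes N}\succeq(1\,\cdots\,1)\boxtimes U^{\boxtimes N}=U^{\boxtimes N}\succeq P$ from $Q\succeq(1\,\cdots\,1)$, so the catalyst-side matrix $x$ never enters the construction of $S$; insisting on building $S$ for $x\boxtimes U^{\boxtimes N}$ directly, as you propose, makes the bookkeeping strictly harder than necessary. Until this construction (or an equivalent one) is carried out, the proof of (ii)$\Rightarrow$(i) is incomplete.
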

	
	\begin{proof}
		We show the implications (iii) $\Rightarrow$ (ii) $\Rightarrow$ (i) $\Rightarrow$ (iv) $\Rightarrow$ (iii). 
		
		(iii) $\Rightarrow$ (ii): Let $k\in[d]$. The matrix $U^{\boxtimes(d-1)}$ contains a row which is the element-wise product of the $d-1$ rows $(u^{(1)}_{i_{k^\prime}}\;\cdots\;u^{(d)}_{i_{k^\prime}})$ with $u^{(k)}_{i_{k^\prime}}>0,u^{(k^\prime)}_{i_{k^\prime}}=0$ for $k^\prime\in[d]\setminus\{k\}$, which exist according to (iii). This row in $U^{\boxtimes(d-1)}$ is clearly of the form $(0\,\cdots\,0\underbrace{x_k}_{k\text{-th entry}}0\,\cdots\,0)$ for some $x_k>0$. We conclude that (ii) holds for $m=d-1$. 
		
		(ii) $\Rightarrow$ (i): Assume $U\in \V_n^d$ satisfies (ii), and let $P,Q\in \V_n^d$, $Q\preceq P$. We may assume WLOG that $P,Q$ have columns of unit $1$-norm. By (ii) there exists $m_0\in\N$ such that $\tilde{U}:=U^{\boxtimes m_0}$ has a row $(0\,\cdots\,0\underbrace{x_k}_{k\text{-th entry}}0\,\cdots\,0)$ with $x_k>0$ for every $k\in[d]$. We then have 
		\begin{equation}
			\tilde{U}=\left(\begin{array}{ccc}
				\tilde{u}^{(1)}_0&\cdots&\tilde{u}^{(d)}_0\\
				1-\|\tilde{u}^{(1)}_0\|_1&\cdots&0\\
				0&\ddots&0\\
				0&\cdots&1-\|\tilde{u}^{(d)}_0\|_1
			\end{array}\right)\succeq\left(\begin{array}{ccc}
				\|\tilde{u}^{(1)}_0\|_1&\cdots&\|\tilde{u}^{(d)}_0\|_1\\
				1-\|\tilde{u}^{(1)}_0\|_1&\cdots&0\\
				0&\ddots&0\\
				0&\cdots&1-\|\tilde{u}^{(d)}_0\|_1
			\end{array}\right), 
		\end{equation}
		for some vectors $\tilde{u}_0^{(k)}\in\R_+^l$, $k\in[d]$, where $l\in\N$, and such that $1-\|\tilde{u}_0^{(k)}\|_1>0$ for all $k\in[d]$. Then for any $m\geq1$ 
		\begin{align}
			\tilde{U}^{\boxtimes m}&\succeq\left(\begin{array}{ccc}
				\|\tilde{u}^{(1)}_0\|_1&\cdots&\|\tilde{u}^{(d)}_0\|_1\\
				1-\|\tilde{u}^{(1)}_0\|_1&\cdots&0\\
				0&\ddots&0\\
				0&\cdots&1-\|\tilde{u}^{(d)}_0\|_1
			\end{array}\right)^{\boxtimes m}\\
			&\succeq\left(\begin{array}{ccc}
				\|\tilde{u}^{(1)}_0\|_1^m&\cdots&\|\tilde{u}^{(d)}_0\|_1^m\\
				1-\|\tilde{u}^{(1)}_0\|_1^m&\cdots&0\\
				0&\ddots&0\\
				0&\cdots&1-\|\tilde{u}^{(d)}_0\|_1^m
			\end{array}\right)=:\tilde{U}_m. 
		\end{align}
		Denote the stochastic matrix that maps $\tilde{U}^{\boxtimes m}$ to $\tilde{U}_m$ as $T_m$. Write
		\begin{equation}
			P=\left(\begin{array}{ccc}
				p_0^{(1)}&\cdots&p_0^{(d)}\\
				p_1^{(1)}&\cdots&p_1^{(d)}
			\end{array}\right), 
		\end{equation}
		where for $k\in[d]$ the vectors $p_0^{(k)}\in\R_{>0}^{n_0}$ form the part of common support of $P$ and $p_1^{(k)}\in\R_+^{n_1}$, and $n_0+n_1=n$. Also, define $v_0:=(1/n_0,\ldots,1/n_0)\in\R_+^{n_0}$. Next, consider 
		\begin{equation}
			S_m:=\left(\begin{array}{cccc}
				v_0&\frac{p_0^{(1)}}{1-\|\tilde{u}_0^{(1)}\|_1^m}-\frac{\|\tilde{u}_0^{(1)}\|_1^m}{1-\|\tilde{u}_0^{(1)}\|_1^m}v_0&\cdots&\frac{p_0^{(d)}}{1-\|\tilde{u}_0^{(d)}\|_1^m}-\frac{\|\tilde{u}_0^{(d)}\|_1^m}{1-\|\tilde{u}_0^{(d)}\|_1^m}v_0\\
				0&\frac{1-\|p_0^{(1)}\|_1}{1-\|\tilde{u}_0^{(1)}\|_1^m}&\cdots&0\\
				\vdots&0&\ddots&0\\
				0&0&\cdots&\frac{1-\|p_0^{(d)}\|_1}{1-\|\tilde{u}_0^{(d)}\|_1^m}
			\end{array}\right), 
		\end{equation}
		which has columns that sum to $1$ and non-negative entries for sufficiently large $m$ because $\|\tilde{u}_0^{(k)}\|_1<1$ and $p_0^{(k)}$ has positive entries for all $k\in[d]$. Then 
		\begin{align}
			P&\preceq\left(\begin{array}{ccc}
				p_0^{(1)}&\cdots&p_0^{(d)}\\
				1-\|p_0^{(1)}\|_1&\cdots&0\\
				0&\ddots&0\\
				0&\cdots&1-\|p_0^{(d)}\|_1
			\end{array}\right)=S_m\tilde{U}_m=S_mT_m\tilde{U}^{\boxtimes m}\\
			&\preceq \tilde{U}^{\boxtimes m}=(1\,\cdots\,1)\boxtimes U^{\boxtimes(m_0m)}\preceq Q\boxtimes U^{\boxtimes(m_0m)}, 
		\end{align}
		which is what we aimed to prove. 
		
		(i) $\Rightarrow$ (iv): Let $\Phi_{\underline{\alpha},C}\in\mathcal{H}^{\rm n.d.}(S^d,\R_+^{\rm op})$, and $P,Q\in \V^d$ such that $Q\preceq P$. Since $U$ is power universal, there exists $m\in\N$ such that $P\preceq Q\boxtimes U^{\boxtimes m}$. Using monotonicity and multiplicativity: 
		\begin{equation}
			\Phi_{\underline{\alpha},C}(P)\geq \Phi_{\underline{\alpha},C}(Q\boxtimes U^{\boxtimes m})=\Phi_{\underline{\alpha},C}(Q)\Phi_{\underline{\alpha},C}(U)^m\geq \Phi_{\underline{\alpha},C}(P)\Phi_{\underline{\alpha},C}(U)^m. 
		\end{equation}
		If $\Phi_{\underline{\alpha},C}(U)=1$, then $\Phi_{\underline{\alpha},C}(P)=\Phi_{\underline{\alpha},C}(Q)$. This implies that $\Phi_{\underline{\alpha},C}$ is degenerate, leading to a contradiction. Therefore, $\Phi_{\underline{\alpha},C}(U)<1$. 
		
		(iv) $\Rightarrow$ (iii): Let $k,k^\prime\in[d]$, $k\neq k^\prime$. Since $\Phi_{e_k,\{k,k^\prime\}}(U)<1$ and the columns in $U$ sum to $1$, $I(U,\{k,k^\prime\})$ cannot contain all rows $i$ in $U$ where $u^{(k)}_i>0$. Hence there exists a row $i$ such that $u^{(k)}_i>0$, but $u^{(k^\prime)}_i=0$, proving the claim. 
	\end{proof}
	
	\begin{remark}
		Note that it follows from the proof of (iv) $\Rightarrow$ (iii) above that we only have to require that monotones from a certain subset of $\mathcal{H}^{\rm n.d.}(S^d,\R_+^{\rm op})$ are strictly less than $1$, namely all those of the type $\Phi_{e_k,\{k,k^\prime\}}$, where $k,k^\prime\in[d]$, $k\neq k^\prime$. 		
	\end{remark}
	
	\subsection{Results: applying the Vergleichsstellensatz}
    \label{subsec:applyingvssnorestrictions}
	
	In the previous sections we have characterized all monotone homomorphisms, the monotone derivations at degenerate monotone homomorphisms, and the power universal elements in the matrix majorization semiring $S^d$. Now, we will apply the Vergleichsstellensatz in the form of Theorem \ref{thm:Fritz2022} to $S^d$ to obtain results on the large-sample and catalytic majorization of tuples of probability distributions. We obtain sufficient (and almost necessary) conditions for {\it exact} majorization in large samples and catalytically in Theorem \ref{thm:majorization} below. Furthermore, we derive necessary and sufficient conditions for majorization in large samples and catalytically where the majorization only happens {\it approximately} with asymptotically vanishing error in Theorem \ref{thm:genasymptocatalytic}.
	
	The conditions, in terms of the monotone homomorphisms, for a matrix $P\in \V^d$  to majorize another $Q\in \V^d$ in the large-sample or catalytic setting, can be expressed as 
	\begin{equation}\label{eq:majorizationconditions}
		\Phi_{\underline{\alpha},C}(P)<\Phi_{\underline{\alpha},C}(Q)\quad\quad\forall\; \Phi_{\underline{\alpha},C}\in\mathcal{H}^{\rm n.d.}(S^d,\R_+^{\rm op}). 
	\end{equation}
    These homomorphisms $\Phi_{\underline{\alpha},C}$ are defined in \eqref{eq:homomorphismsum} and if we adopt the convention $0^0=1$, we can rewrite these more conveniently as follows: 
    \begin{equation}
        \label{eq:PhiR+op}
        \Phi_{\underline{\alpha},C}(P)=\sum_{i\in I(P,C)}\big(p^{(1)}_i\big)^{\alpha_1}\cdots\big(p^{(d)}_i\big)^{\alpha_d}, 
    \end{equation}
    where $\underline{\alpha}\in A_+$ (defined in \eqref{eq:A+}), $C\subseteq[d]$ such that ${\rm supp}\,\underline{\alpha}\subseteq C$, and the sum runs only over the rows in $I(P,C)$ (defined in \eqref{eq:Iset}). Furthermore, we exclude the degenerate homomorphisms $\Phi^{(k)}(P)=\|p^{(k)}\|_1$, which equals the sum of the entries in column $k$, for $k\in[d]$. 
	
	We note that if $P,Q\in\V^d$ have columns that sum to $1$ and satisfy \eqref{eq:majorizationconditions}, then $P$ is automatically power universal. To see this, observe that for any $\Phi_{\underline{\alpha},C}\in\mathcal{H}^{\rm n.d.}(S^d,\R_+^{\rm op})$ we have $\Phi_{\underline{\alpha},C}(P)<\Phi_{\underline{\alpha},C}(Q)\leq1$, hence (iv) in Proposition \ref{lem:poweruniversalconditions} holds for $U=P$, which means $P$ is power universal. 
	
	Putting everything together, Theorem \ref{thm:Fritz2022} yields the following result. 
	\begin{theorem}\label{thm:majorization}
		Let $P=(p^{(1)},\ldots,p^{(d)})$, $Q=(q^{(1)},\ldots,q^{(d)})$ be tuples of probability vectors such that $\bigcap_{k=1}^d{\rm supp}\,p^{(k)}\neq\emptyset$ and $\bigcap_{k=1}^d{\rm supp}\,q^{(k)}\neq\emptyset$. If the inequalities in \eqref{eq:majorizationconditions} hold, then we have the following: 
		\begin{enumerate}[(a)]
			\item There exist a stochastic map $T$ and a tuple of probability vectors $(r^{(1)},\ldots,r^{(d)})$, satisfying $\bigcap_{k=1}^d{\rm supp}\,r^{(k)}\neq\emptyset$, such that
			\begin{equation}
				T(r^{(k)}\otimes p^{(k)})=r^{(k)}\otimes q^{(k)}\quad\text{for }k\in[d], 
			\end{equation}
			and we may choose, for $n$ sufficiently large, 
			\begin{equation}
                \label{eq:rform}
				r^{(k)}=\frac{1}{n+1}\bigoplus_{\ell=0}^n\left(q^{(k)}\right)^{\otimes \ell}\otimes\left(p^{(k)}\right)^{\otimes(n-\ell)}\quad\text{ for }k\in[d]. 
			\end{equation}
			\item For $n$ sufficiently large there exists a stochastic map $T_n$ such that 
			\begin{equation}
				T_n(p^{(k)})^{\otimes n}=(q^{(k)})^{\otimes n}\quad\text{for }k\in[d]. 
			\end{equation}
		\end{enumerate}
		Conversely, if either one of these holds, then the inequalities in \eqref{eq:majorizationconditions} hold non-strictly. 
	\end{theorem}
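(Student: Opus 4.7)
The plan is to derive Theorem \ref{thm:majorization} directly from the Vergleichsstellensatz, Theorem \ref{thm:Fritz2022}, applied to the preordered semiring $S^d$ with the identifications $x := Q$, $y := P$, and $u := P$ as the power-universal element. Most of the work has already been done in the preceding subsections, so the proof should proceed essentially by checking each hypothesis of Theorem \ref{thm:Fritz2022} in turn and then reading off the two conclusions.

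First, I would verify the structural prerequisites of Theorem \ref{thm:Fritz2022}. The semiring $S^d$ is zerosumfree and a semidomain by construction ($\boxplus$ is concatenation and $\boxtimes$ is column-wise Kronecker product, and $P \boxplus Q = 0$ or $P \boxtimes Q = 0$ force a factor to vanish), and the surjective homomorphism $\|\cdot\|\colon S^d \to \R_{>0}^d \cup \{(0,\ldots,0)\}$ given by column $1$-norms was shown in the excerpt to have trivial kernel and the required order compatibility. Both $P$ and $Q$ have unit column norms, so $\|P\| = \|Q\| = (1,\ldots,1)$. The nontrivial check is that $P$ is power universal: combining the hypothesis \eqref{eq:majorizationconditions} with the inequality $\Phi_{\underline{\alpha},C}(Q) \leq 1$ (which follows from the column-$1$-norm being $1$) yields $\Phi_{\underline{\alpha},C}(P) < 1$ for every $\Phi_{\underline{\alpha},C} \in \mathcal{H}^{\rm n.d.}(S^d,\R_+^{\rm op})$, so Proposition \ref{lem:poweruniversalconditions} (implication (iv) $\Rightarrow$ (i)) gives power universality of $P$.

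Next I would verify the two monotone-valued conditions. For condition (i), Proposition \ref{pro:monhommin} removes all cases except $\mathbb{K} = \R_+^{\rm op}$, where the non-degenerate monotone homomorphisms are exactly those in $\mathcal{H}^{\rm n.d.}(S^d,\R_+^{\rm op})$. Strict inequality in $\R_+^{\rm op}$ translates to $\Phi_{\underline{\alpha},C}(P) < \Phi_{\underline{\alpha},C}(Q)$ in standard order, which is precisely \eqref{eq:majorizationconditions}; that each such $\Phi_{\underline{\alpha},C}$ has trivial kernel is immediate from its explicit definition \eqref{eq:PhiR+op}, since $\Phi_{\underline{\alpha},C}(P)=0$ forces the common-support rows of $P$ to vanish, hence $P=0$. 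For condition (ii), Proposition \ref{pro:derivations} shows that every monotone derivation of each degenerate $\Phi^{(k)}$ is zero modulo interchangeability, so no representative in the unique equivalence class satisfies $\Delta(u) = 1$ and condition (ii) is vacuous.

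With these verifications in place, Theorem \ref{thm:Fritz2022} yields both parts. For (b), the conclusion $Q^{\boxtimes n} \preceq P^{\boxtimes n}$ for all sufficiently large $n$ is exactly the existence of the required stochastic map $T_n$. For (a), the catalyst provided by Theorem \ref{thm:Fritz2022} is $a = \bigboxplus_{\ell=0}^n Q^{\boxtimes \ell} \boxtimes P^{\boxtimes (n-\ell)}$; each column of $a$ has $1$-norm equal to $n+1$, so dividing by $n+1$ produces the probability tuple $(r^{(1)}, \ldots, r^{(d)})$ of \eqref{eq:rform}, and the catalytic majorization $a\boxtimes Q \preceq a\boxtimes P$ becomes the desired relation $T(r^{(k)} \otimes p^{(k)}) = r^{(k)} \otimes q^{(k)}$ after normalization. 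The converse direction is immediate: if $T_n$ exists for some $n$, then monotonicity and multiplicativity of each $\Phi_{\underline{\alpha},C}$ yield $\Phi_{\underline{\alpha},C}(P)^n \leq \Phi_{\underline{\alpha},C}(Q)^n$, hence the non-strict version of \eqref{eq:majorizationconditions}; and similarly in the catalytic case the factor $\Phi_{\underline{\alpha},C}(r^{(1)},\ldots,r^{(d)})$ cancels on both sides provided it is nonzero, which is ensured because the support-intersection condition $\bigcap_k \operatorname{supp} r^{(k)} \neq \emptyset$ holds by construction.

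The main obstacle is largely bookkeeping rather than mathematical: one must carefully track the order reversal inherent in $\R_+^{\rm op}$ when translating between the abstract Vergleichsstellensatz formulation and the concrete statement of majorization, and one must verify that the catalyst obtained from Theorem \ref{thm:Fritz2022} indeed belongs to $\V^d$ (its support-intersection is nonempty because, e.g., the $\ell=0$ summand contributes $P^{\boxtimes n}$, whose columns share a common support by assumption). All deeper work — classifying the monotones, verifying monotonicity of the $\Phi_{\underline{\alpha},C}$, and characterizing power universality — has already been completed in the preceding propositions.
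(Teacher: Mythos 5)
Your proposal is correct and takes essentially the same route as the paper: Theorem \ref{thm:majorization} is obtained there by directly invoking Theorem \ref{thm:Fritz2022} on $S^d$ with the column-norm homomorphism, the classification of non-degenerate monotone homomorphisms (Proposition \ref{pro:monhommin}), the vanishing of monotone derivations up to interchangeability (Proposition \ref{pro:derivations}), and the observation that \eqref{eq:majorizationconditions} already forces $P$ to be power universal via Proposition \ref{lem:poweruniversalconditions}. Your extra bookkeeping (trivial kernels, order reversal in $\R_+^{\rm op}$, normalization of the catalyst, and the vacuity of the derivation condition) is all accurate.
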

	
	Just as in the case with equal support in \cite{farooq2024}, we have an asymptotic version of majorization. Note that now, as opposed to the exact version of majorization in the Theorem above, we have to additionally require that $P$ is power universal, which is also necessary for the approximate version in the equal-support case in \cite{farooq2024}.  
	\begin{theorem}\label{thm:genasymptocatalytic}
		Let $P=(p^{(1)},\ldots,p^{(d)})$, $Q=(q^{(1)},\ldots,q^{(d)})$ be tuples of probability vectors such that $\bigcap_{k=1}^d{\rm supp}\,p^{(k)}\neq\emptyset$ and $\bigcap_{k=1}^d{\rm supp}\,q^{(k)}\neq\emptyset$, and $P$ is power universal. The following are equivalent: 
		\begin{enumerate}[(i)]
			\item For all $\Phi_{\underline{\alpha},C}\in\mathcal{H}^{\rm n.d.}(S^d,\R_+^{\rm op})$ that are continuous (see \eqref{eq:continuity}) it holds that 
            \begin{equation}
                \Phi_{\underline{\alpha},C}(P)\leq \Phi_{\underline{\alpha},C}(Q). 
            \end{equation}
			\item For every $\varepsilon>0$ there exist two tuples of probability vectors $(q_\varepsilon^{(1)},\ldots,q_\varepsilon^{(d)})$ and $(r_\varepsilon^{(1)},\ldots,r_\varepsilon^{(d)})$, satisfying $\bigcap_{k=1}^d{\rm supp}\,q_\varepsilon^{(k)}\neq\emptyset$ and $\bigcap_{k=1}^d{\rm supp}\,r_\varepsilon^{(k)}\neq\emptyset$, and a stochastic map $T_\varepsilon$ such that
			\begin{equation}
				\|q^{(k)}-q_\varepsilon^{(k)}\|_1\leq\varepsilon\quad\text{for }k\in[d], 
			\end{equation}
			and 
			\begin{equation}
				T_\varepsilon(r_\varepsilon^{(k)}\otimes p^{(k)})=r_\varepsilon^{(k)}\otimes q_\varepsilon^{(k)}\quad\text{for }k\in[d]. 
			\end{equation}
			Also, we may choose, for $n$ sufficiently large
			\begin{equation}
				r_\varepsilon^{(k)}=\frac{1}{n+1}\bigoplus_{\ell=0}^n\left(q_\varepsilon^{(k)}\right)^{\otimes\ell}\otimes\left(p^{(k)}\right)^{\otimes(n-\ell)}\quad\text{ for }k\in[d]. 
			\end{equation}
			\item For every $\varepsilon>0$ there exist a tuple of probability vectors $(q_\varepsilon^{(1)},\ldots,q_\varepsilon^{(d)})$, satisfying\\$\bigcap_{k=1}^d{\rm supp}\,q_\varepsilon^{(k)}\neq\emptyset$, such that
			\begin{equation}
				\|q^{(k)}-q_\varepsilon^{(k)}\|_1\leq\varepsilon\quad\text{for }k\in[d], 
			\end{equation}
			and for $n$ sufficiently large a stochastic map $T_{\varepsilon,n}$ such that 
			\begin{equation}
				T_{\varepsilon,n}(p^{(k)})^{\otimes n}=(q_\varepsilon^{(k)})^{\otimes n}\quad\text{for }k\in[d]. 
			\end{equation}
		\end{enumerate}
	\end{theorem}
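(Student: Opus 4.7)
The plan is to reduce the asymptotic statement to the exact Theorem~\ref{thm:majorization} via a uniform perturbation of $Q$. The forward direction (i)~$\Rightarrow$~(ii),~(iii) is the substantive one; the converses follow from monotonicity plus continuity of the monotones featured in (i).

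The first step in the forward direction is to extend (i) so that it covers every non-degenerate monotone, not merely the continuous ones. A discontinuous $\Phi_{\underline{\alpha},C}\in\mathcal{H}^{\rm n.d.}(S^d,\R_+^{\rm op})$ has $C\supsetneq{\rm supp}\,\underline{\alpha}$ and non-degeneracy forces $|C|\geq 2$. Defining $\underline{\alpha}_t\in A_+$ by $(\alpha_t)_k := (1-t)\alpha_k + t/|C|$ for $k\in C$ and $(\alpha_t)_k := 0$ otherwise, one has $\underline{\alpha}_t\in A_+\setminus\{e_1,\ldots,e_d\}$ with ${\rm supp}\,\underline{\alpha}_t = C$ for $t\in(0,1]$, so each $\Phi_{\underline{\alpha}_t,C}$ is continuous. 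As $t\downarrow 0$, the integrand in \eqref{eq:PhiR+op} converges pointwise on the fixed index set $I(P,C)$, giving $\Phi_{\underline{\alpha}_t,C}(P)\to\Phi_{\underline{\alpha},C}(P)$ and likewise for $Q$. Hence (i) propagates to $\Phi(P)\leq\Phi(Q)$ for every $\Phi\in\mathcal{H}^{\rm n.d.}(S^d,\R_+^{\rm op})$.

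Next I construct the perturbation. Fix any full-support probability vector $u$ and set $N:=(u,u,\ldots,u)\in\V^d$. For $\eta:=\varepsilon/2$, define
\begin{equation*}
Q_\varepsilon := (1-\eta)\,Q \,\boxplus\, \eta\,N.
\end{equation*}
The rows of $\eta N$ have common support, so $Q_\varepsilon\in\V^d$, and padding $q^{(k)}$ with zeros yields $\|q^{(k)}_\varepsilon - q^{(k)}\|_1 = 2\eta = \varepsilon$. Any non-degenerate $\Phi=\Phi_{\underline{\alpha},C}$ satisfies $\Phi(N)=\sum_i u_i^{\alpha_1+\cdots+\alpha_d}=1$ and scales as $\Phi(cP)=c\,\Phi(P)$, so
\begin{equation*}
\Phi(Q_\varepsilon) = (1-\eta)\Phi(Q)+\eta \,\geq\, (1-\eta)\Phi(P)+\eta \,=\, \Phi(P)+\eta\bigl(1-\Phi(P)\bigr) \,>\, \Phi(P),
\end{equation*}
where the strict inequality uses $\Phi(P)<1$, which is Proposition~\ref{lem:poweruniversalconditions}(iv) for the power-universal $P$. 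Theorem~\ref{thm:majorization} applied to $(P,Q_\varepsilon)$ then delivers both (ii) and (iii), with the catalyst $r_\varepsilon^{(k)}$ inheriting the explicit form from part~(a) of that theorem.

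For the converses, pick any continuous non-degenerate $\Phi$. From $T_\varepsilon(r_\varepsilon\otimes p) = r_\varepsilon\otimes q_\varepsilon$ or $T_{\varepsilon,n}(p)^{\otimes n} = q_\varepsilon^{\otimes n}$, monotonicity and multiplicativity give $\Phi(R_\varepsilon)\Phi(P)\leq\Phi(R_\varepsilon)\Phi(Q_\varepsilon)$ or $\Phi(P)^n\leq\Phi(Q_\varepsilon)^n$; since any $R_\varepsilon\in\V^d$ has a full-support row, $\Phi(R_\varepsilon)>0$, hence $\Phi(P)\leq\Phi(Q_\varepsilon)$. Continuity of $\Phi_{\underline{\alpha},{\rm supp}\,\underline{\alpha}}$ in the matrix entries (a row with a vanishing coordinate in ${\rm supp}\,\underline{\alpha}$ contributes $0$, so the sum in \eqref{eq:PhiR+op} may be taken over all indices), combined with $\|q^{(k)}_\varepsilon-q^{(k)}\|_1\to 0$, gives $\Phi(Q_\varepsilon)\to\Phi(Q)$ and hence (i). The main obstacle throughout is arranging strict inequalities for all non-degenerate monotones simultaneously — including the discontinuous ones — by a perturbation of $1$-norm at most $\varepsilon$; this succeeds only because $\Phi(N)=1$ is maximal while power universality gives $\Phi(P)<1$ uniformly over every non-degenerate $\Phi$.
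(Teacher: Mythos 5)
Your proposal follows the same overall strategy as the paper's proof — perturb $Q$ by an $\varepsilon/2$-weight admixture of a full-support matrix, verify the strict inequalities of Theorem \ref{thm:majorization} for the perturbed target, and obtain the converse from monotonicity — but two of your sub-arguments genuinely differ, and both are sound. The paper perturbs by the \emph{entry-wise} convex combination $Q_\varepsilon=(1-\tfrac{\varepsilon}{2})Q+\tfrac{\varepsilon}{2}W$, which makes $Q_\varepsilon$ strictly positive; that positivity is exactly what lets the paper reduce a discontinuous $\Phi_{\underline{\alpha},C}$ evaluated at $Q_\varepsilon$ to the continuous $\Phi_{\underline{\alpha},{\rm supp}\,\underline{\alpha}}$, to which hypothesis (i) applies. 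You instead use the direct sum $(1-\eta)Q\boxplus\eta N$, which keeps the zeros of $Q$, and compensate by first upgrading (i) to \emph{all} non-degenerate homomorphisms via the limit $\underline{\alpha}_t\to\underline{\alpha}$ over the fixed, finite index set $I(\cdot,C)$. That limiting lemma is correct (non-degeneracy forces $|C|\geq2$, so the approximants $\Phi_{\underline{\alpha}_t,C}$ are continuous and non-degenerate, and all entries indexed by $C$ are positive on $I(\cdot,C)$), after which the exact identity $\Phi(Q_\varepsilon)=(1-\eta)\Phi(Q)+\eta$ together with $\Phi(P)<1$ from Proposition \ref{lem:poweruniversalconditions} closes the forward direction uniformly over all non-degenerate $\Phi$, with no case analysis at $Q_\varepsilon$. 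This is a legitimate and arguably tidier alternative; what it costs is the extra limiting lemma, what it buys is that additivity of $\Phi$ under $\boxplus$ does all the work.

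The one step you should shore up is in the converse. You assert $\Phi(Q_\varepsilon)\to\Phi(Q)$ from ``continuity of $\Phi_{\underline{\alpha},{\rm supp}\,\underline{\alpha}}$ in the matrix entries,'' but $Q_\varepsilon$ is only constrained by $\|q^{(k)}_\varepsilon-q^{(k)}\|_1\leq\varepsilon$ and may have an unbounded number of rows as $\varepsilon\to0$, so continuity for a fixed number of rows does not apply directly; moreover $(x_1,\ldots,x_d)\mapsto\prod_k x_k^{\alpha_k}$ is not Lipschitz, so a naive row-by-row estimate fails. The claim is nonetheless true and can be repaired either as the paper does — split $Q_\varepsilon=Q_\varepsilon^0\boxplus Q_\varepsilon^1$ with $Q_\varepsilon^0$ the block matching the rows of $Q$, bound $\Phi(Q_\varepsilon)\leq\Phi(Q_\varepsilon^0)+\varepsilon$ using that the column sums of $Q_\varepsilon^1$ are at most $\varepsilon$, and then use fixed-dimension continuity on $Q_\varepsilon^0$ — or by proving the dimension-free H\"older estimate
\begin{equation*}
\Bigl|\sum_i\prod_k x_{i,k}^{\alpha_k}-\sum_i\prod_k y_{i,k}^{\alpha_k}\Bigr|\leq\sum_k\bigl\|x^{(k)}-y^{(k)}\bigr\|_1^{\alpha_k},
\end{equation*}
which follows from $|a^{\alpha}-b^{\alpha}|\leq|a-b|^{\alpha}$ and the generalized H\"older inequality. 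With that addition your argument is complete.
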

	\begin{proof}
		(i) $\Rightarrow$ (ii) and (iii): Fix any $k\in[d]$. Define $W:=(w,\ldots,w)$, where $w:=(1/n,\ldots,1/n)\in\R^n$, and 
		\begin{equation}
			Q_\varepsilon:=\left(1-\frac{\varepsilon}{2}\right)Q+\frac{\varepsilon}{2}W, 
		\end{equation}
		where $+$ means entry-wise addition of two matrices. Write $Q_\varepsilon=(q_\varepsilon^{(1)},\ldots,q_\varepsilon^{(d)})$. Let $\Phi_{\underline{\alpha},C}\in\mathcal{H}^{\rm n.d.}(S^d,\R_+^{\rm op})$ be continuous. Then 
		\begin{align}
			\Phi_{\underline{\alpha},C}&(Q_\varepsilon)=\Phi_{\underline{\alpha},C}\left(\left(1-\frac{\varepsilon}{2}\right)Q+\frac{\varepsilon}{2}W\right)\geq \Phi_{\underline{\alpha},C}\left(\left(1-\frac{\varepsilon}{2}\right)Q\boxplus\frac{\varepsilon}{2}W\right)\\
			&=\left(1-\frac{\varepsilon}{2}\right)\Phi_{\underline{\alpha},C}(Q)+\frac{\varepsilon}{2}\Phi_{\underline{\alpha},C}(W)=\Phi_{\underline{\alpha},C}(Q)+\frac{\varepsilon}{2}[1-\Phi_{\underline{\alpha},C}(Q)]\geq \Phi_{\underline{\alpha},C}(Q)\geq \Phi_{\underline{\alpha},C}(P), 
		\end{align}
		where in the first inequality we used monotonicity, in the equality that follows we used additivity and the fact that $\Phi_{\underline{\alpha},C}(cA)=c\Phi_{\underline{\alpha},C}(A)$ for any $c>0$ and $A\in \V^d$, in the next equality we used that $\Phi_{\underline{\alpha},C}(W)=1$ since all the columns in $W$ are the same and sum to $1$, in the penultimate inequality we used that $\Phi_{\underline{\alpha},C}(Q)\leq1$, and in the final inequality we used the assumption in (i). If $\Phi_{\underline{\alpha},C}(Q)=1$, then the final inequality is strict since $\Phi_{\underline{\alpha},C}(P)<1$ by Lemma \ref{lem:poweruniversalconditions}. If $\Phi_{\underline{\alpha},C}(Q)<1$, then the penultimate inequality is strict. Thus, in both cases $\Phi_{\underline{\alpha},C}(P)<\Phi_{\underline{\alpha},C}(Q_\varepsilon)$.
		
		Let $\Phi_{\underline{\alpha},C}\in\mathcal{H}^{\rm n.d.}(S^d,\R_+^{\rm op})$ be discontinuous. Since $Q_\varepsilon$ has no $0$ entries, $\Phi_{\underline{\alpha},C}(Q_\varepsilon)=\Phi_{\underline{\alpha},D}(Q_\varepsilon)$, where $D\subseteq C$ consists of all $c\in C$ that satisfy $\alpha_c>0$. If $\Phi_{\underline{\alpha},D}$ is one of the degenerate homomorphisms (i.e. the $1$-norm of one of the columns), then 
        \begin{equation}
			\Phi_{\underline{\alpha},C}(P)<1=\Phi_{\underline{\alpha},D}(Q_\varepsilon)=\Phi_{\underline{\alpha},C}(Q_\varepsilon), 
		\end{equation}
        where the inequality follows from Lemma \ref{lem:poweruniversalconditions}. If $\Phi_{\underline{\alpha},D}$ is nondegenerate, then we find 
		\begin{equation}
			\Phi_{\underline{\alpha},C}(P)\leq \Phi_{\underline{\alpha},D}(P)<\Phi_{\underline{\alpha},D}(Q_\varepsilon)=\Phi_{\underline{\alpha},C}(Q_\varepsilon), 
		\end{equation}
		where the first inequality is due to the fact that $\Phi_{\underline{\alpha},C}$ sums over all or a subset of the rows that $\Phi_{\underline{\alpha},D}$ sums over, and the second inequality is due to the fact that $\Phi_{\underline{\alpha},D}$ is continuous and nondegenerate. 
		
		We conclude that $\Phi_{\underline{\alpha},C}(P)<\Phi_{\underline{\alpha},C}(Q_\varepsilon)$ for all $\Phi_{\underline{\alpha},C}\in\mathcal{H}^{\rm n.d.}(S^d,\R_+^{\rm op})$, hence Theorem \ref{thm:majorization} yields (ii) and (iii). Note that one easily sees that $\bigcap_{k=1}^d{\rm supp}\,q_\varepsilon^{(k)}\neq\emptyset$ and the fact that $\bigcap_{k=1}^d{\rm supp}\,r_\varepsilon^{(k)}\neq\emptyset$ in (ii) follows from the possible choice of the $r_\varepsilon^{(k)}$ in \eqref{eq:rform}. 
		
		(ii) or (iii) $\Rightarrow$ (i): The fact that $\bigcap_{k=1}^d{\rm supp}\,q_\varepsilon^{(k)}\neq\emptyset$, respectively $\bigcap_{k=1}^d{\rm supp}\,r_\varepsilon^{(k)}\neq\emptyset$, implies that $(q_\varepsilon^{(1)},\ldots,q_\varepsilon^{(d)})\in S^d$, respectively $(r_\varepsilon^{(1)},\ldots,r_\varepsilon^{(d)})\in S^d$. Then, for any continuous $\Phi\in\mathcal{H}^{\rm n.d.}(S^d,\R_+^{\rm op})$ we find for any $\varepsilon>0$, using monotonicity and multiplicativity, that $\Phi(P)\leq \Phi(Q_\varepsilon)$, where $Q_\varepsilon:=(q_\varepsilon^{(1)},\ldots,q_\varepsilon^{(d)})$. For any $\varepsilon>0$, write $Q_\varepsilon=Q_\varepsilon^0\boxplus Q_\varepsilon^1$, where $Q_\varepsilon^0$ consists of the first $n_Q$ rows of $Q_\varepsilon$ and $n_Q$ is the number of rows in $Q$ (if $Q_\varepsilon$ has less than $n_Q$ rows, then we add rows of zeros to form $Q_\varepsilon^0$). Note that $\lim_{\varepsilon\rightarrow0}Q_\varepsilon^0=Q$ and the sum of every column in $Q_\varepsilon^1$ is at most $\varepsilon$. Then 
        \begin{equation}
            \Phi(Q_\varepsilon)+1=\Phi(Q_\varepsilon\boxplus(1\,\cdots\,1))=\Phi(Q_\varepsilon^0)+\underbrace{\Phi(Q_\varepsilon^1\boxplus(1\,\cdots\,1))}_{\leq1+\varepsilon}, 
        \end{equation}
        where in the inequality we used monotonicity and the fact that the rows of $Q_\varepsilon^1\boxplus(1\,\cdots\,1)$ sum to at most $1+\varepsilon$. Hence $\Phi(Q_\varepsilon)\leq\Phi(Q_\varepsilon^0)+\varepsilon$. This implies that $\Phi(P)\leq\Phi(Q_\varepsilon^0)+\varepsilon$. Taking the limit $\varepsilon\rightarrow0$ in this inequality, since $\Phi$ is a continuous function when restricted to matrices with a fixed number of rows, we deduce that $\Phi(P)\leq\Phi(Q_\varepsilon)$, completing the proof.     
	\end{proof}

    \begin{remark}
    \label{rmk:minimalrestrictions}
        As mentioned in the Introduction, we could drop the requirement \eqref{eq:baselinecond}, namely that the intersection of the supports of the columns in a matrix is non-empty, and require instead that the pairwise intersection of supports is non-empty. This would yield a semiring with even less restrictions than the one studied in this section. However, in that case, the homomorphisms and derivations appearing in the conditions of the Vergleichsstellensatz (Theorem \ref{thm:Fritz2022}) will turn out to be only the bivariate ones, namely those associated with the R\'enyi divergences between each pair of columns of the matrix. To get an idea why this is so, consider the $d=3$ case. A matrix of the form 
        \begin{equation}
            P:=\left(\begin{array}{ccc}
            *&*&0\\
            *&0&*\\
            0&*&*
            \end{array}\right), 
        \end{equation}
        where the asterisks represent positive entries, has pairwise non-empty intersections of the supports, but does not have non-empty intersection over all the supports. Any homomorphism $\Phi_{\underline{\alpha},C}$ with a character $C$ containing more than two columns will evaluate to zero on the matrix above, since the set of rows $I(P,C)$ defined in \eqref{eq:Iset} is empty. Hence $\Phi_{\underline{\alpha},C}$ has a non-trivial kernel, and such homomorphisms are excluded from the set of homomorphisms appearing in the conditions of Theorem \ref{thm:Fritz2022}. Therefore, the only non-degenerate monotone homomorphisms with trivial kernel associated with this semiring are $\Phi_{\underline{\alpha},\{k,k'\}}$ with $k,k'\in[d]$, $k\neq k'$, $\underline{\alpha}\in A_+$ having at most two non-zero components $\alpha_k,\alpha_{k'}\geq0$, and $\alpha_{l}=0$ for all $l\neq k,k'$. These homomorphisms correspond to the R\'enyi divergences between any pair of columns, with parameter $\alpha\in[0,1)$; see also Figure \ref{fig:emptytriangle} for a depiction.

        Additionally, the conditions for a matrix to be power universal are even stricter in this semiring than in $S^d$. Namely, a matrix having any common support among all columns is now no longer power universal, hence we can no longer use this semiring to find majorization conditions for such a matrix.  
    \end{remark}

\begin{figure}
\begin{center}
\begin{overpic}[scale=0.43,unit=1mm]{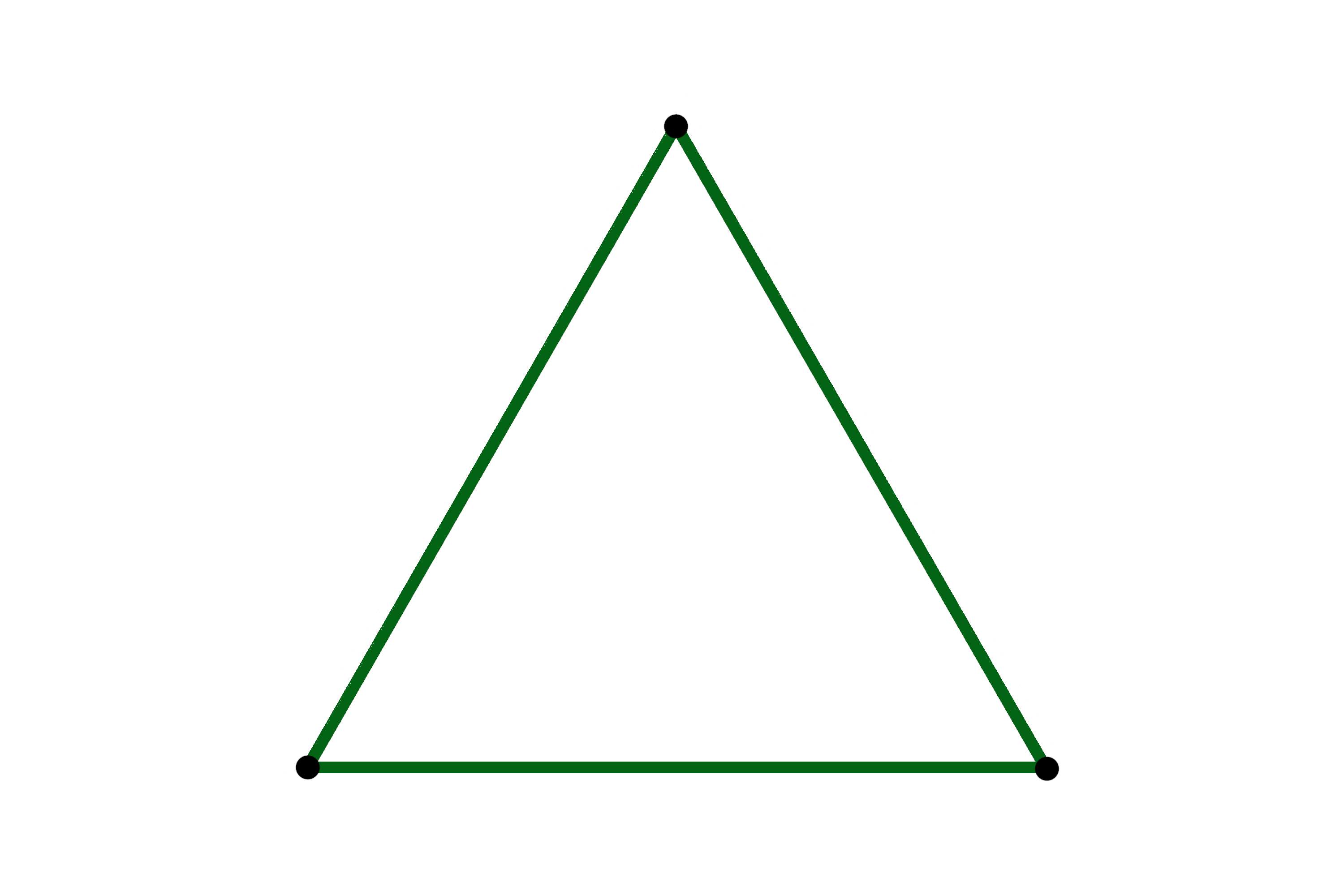}
\put(12,5){\begin{large}
$\Phi^{(1)}$
\end{large}}
\put(88,5){\begin{large}
$\Phi^{(2)}$
\end{large}}
\put(50,66){\begin{large}
$\Phi^{(3)}$
\end{large}}
\put(41,13){\begin{large}
\darker{$\Phi_{\alpha,1-\alpha,0,\{1,2\}}$}
\end{large}}
\put(70,35){\begin{large}
\darker{$\Phi_{0,\beta,1-\beta,\{2,3\}}$}
\end{large}}
\put(11,35){\begin{large}
\darker{$\Phi_{\gamma,0,1-\gamma,\{1,3\}}$}
\end{large}}
\put(21,6){
\darker{$\Phi_{e_1,\{1,2\}}$}
}
\put(10,13){
\darker{$\Phi_{e_1,\{1,3\}}$}
}
\put(74,6){
\darker{$\Phi_{e_2,\{1,2\}}$}
}
\put(83,13){
\darker{$\Phi_{e_2,\{2,3\}}$}
}
\put(38,62){
\darker{$\Phi_{e_3,\{1,3\}}$}
}
\put(55,62){
\darker{$\Phi_{e_3,\{2,3\}}$}
}
\end{overpic}
\caption{\label{fig:emptytriangle} A depiction of the monotone homomorphisms of the semiring discussed in Remark \ref{rmk:minimalrestrictions} in the case $d=3$. Only bivariate R\'{e}nyi divergences involving only two columns of the matrices are present. On the line segment connecting $e_1$ and $e_2$, e.g.,\ the homomorphisms correspond to $D_\alpha\big(p^{(1)}\big\|p^{(2)}\big)$ and $D_\alpha\big(p^{(2)}\big\|p^{(1)}\big)$ for $0\leq\alpha\leq 1/2$. Note that at the corner points we have again two nondegenerate (bivariate) homomorphisms in addition to the degenerate $\Phi^{(1)}$, $\Phi^{(2)}$, and $\Phi^{(3)}$.}
\end{center}
\end{figure}
	
	\section{One dominating column case}\label{sec:Asymmetric}
	
	In this section we will consider tuples of probability vectors where the support of one vector contains the support of all other vectors. Such tuples have applications in quantum thermodynamics, as we will see later. Similarly as for $S^d$, we will use the Vergleichsstellensatz given by Theorem \ref{thm:Fritz2022} to derive conditions for large-sample and catalytic majorization. 
	
	We define the \emph{dominating column matrix majorization semiring} $S^d_{\rm d.c.}$ as the subset of all (equivalence classes of) matrices in $S^d$ where the support of the last column contains all others: 
	\begin{equation}
		S^d_{\rm d.c.}:=\left\{\,\left[\left(p^{(1)},\ldots,p^{(d)}\right)\right]\in S^d\;\middle|\;{\rm supp}\,p^{(k)}\subseteq {\rm supp}\,p^{(d)}\quad\forall k\in[d-1]\,\right\}. 
	\end{equation}
 The set of individual matrices (i.e.,\ lifting the projection into equivalence classes) generating $S^d_{\rm d.c.}$ is denoted by $\mc V^d_{\rm d.c.}$. The subset of matrices in $\mc V^d_{\rm d.c.}$ with columns of length $n\in\N$ is denoted by $(\mc V_{\rm d.c.})^d_n$. In subsection \ref{subsec:poweruniversaldc}, we identify the power universals of $S^d_{\rm d.c.}$ thus showing that $S^d_{\rm d.c.}$ is of polynomial growth.
		
\subsection{Monotone homomorphisms}

Since $S^d_{\rm d.c.}$ is a subset of $S^d$, all monotone homomorphisms of $S^d$, when restricted to $S^d_{\rm d.c.}$, are also monotone homomorphisms of $S^d_{\rm d.c.}$. Thus, especially functions in $\mathcal{H}^{\rm n.d.}(S^d_{\rm d.c.},\R_+^{\rm op})$, which is defined in exactly the same way as in \eqref{eq:temperateopposite}, are non-degenerate monotone homomorphisms $S^d_{\rm d.c.}\to\R_+^{\rm op}$. Note that in this case some homomorphisms that are different on $S^d$ coincide on $S^d_{\rm d.c.}$, since column $d$ always has a non-zero entry and therefore $\Phi_{\underline{\alpha},C}=\Phi_{\underline{\alpha},C\cup\{d\}}$ for any $\underline{\alpha}\in A_+$ and ${\rm supp}\,\underline{\alpha}\subseteq C\subseteq[d]$. This also means that the homomorphisms $\Phi_{e_k,\{k,d\}}$ for $k\in[d-1]$, which are nondegenerate on $S^d$, are degenerate on $S^d_{\rm d.c.}$. 

Let $\alpha>1$ and $c\in[d-1]$. Define $\varphi_{\alpha,c}:\R_+\times\R_{>0}\to\R$ by 
\begin{equation}
    \varphi_{\alpha,c}(x_c,x_d):=x_c^\alpha x_d^{1-\alpha}. 
\end{equation}
It follows from the proof of Proposition 13 in \cite{farooq2024} that $\varphi_{\alpha,c}$ is convex on $\R_{>0}^2$. Using the fact that $\alpha>0$ and the continuity of $\varphi_{\alpha,c}$, we find that $\varphi_{\alpha,c}$ is convex on its entire domain $\R_+\times\R_{>0}$. In the following, we will abuse notation by writing $\varphi_{\alpha,c}(x)$ for $x$ in a higher dimensional space $\R^d$ instead of $x\in \R_+\times\R_{>0}$, where it is understood that we ignore the coordinates not equal to $c$ or $d$. 

We now define the homomorphism $\Phi_{\alpha,c}:S^d\to\R_+$ (where we assume WLOG that $p^{(d)}_i>0$ for all $i=1,\ldots,n$) by 
\begin{equation}
\label{eq:homdcR+}
    \Phi_{\alpha,c}(P):=\sum_{i=1}^n\varphi_{\alpha,c}\left(p_i^{(c)},p_i^{(d)}\right),\quad\quad P=(p^{(1)},\ldots,p^{(d)})\in (\V_{\rm d.c.})_{n}^d. 
\end{equation}
We denote all these homomorphisms $\Phi_{\alpha,c}$ that are non-degenerate by $\mathcal{H}^{\rm n.d.}(S^d_{\rm d.c.},\R_+)$, i.e.,
\begin{equation}
\mathcal{H}^{\rm n.d.}(S^d_{\rm d.c.},\R_+)=\big\{\Phi_{\alpha,c}\,\big|\,1<\alpha<\infty,\ c=1,\ldots,d-1\big\}.
\end{equation}
	
	\begin{proposition}
		All homomorphisms in $\mathcal{H}^{\rm n.d.}(S^d_{\rm d.c.},\R_+)$ are monotone. 
	\end{proposition}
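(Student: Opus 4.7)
The plan is to mirror the monotonicity proof for homomorphisms in $\mathcal{H}^{\rm n.d.}(S^d,\R_+^{\rm op})$ almost line-for-line. Two things have changed: $\varphi_{\alpha,c}$ is now convex rather than concave (since $\alpha>1$, as established via Proposition~13 of \cite{farooq2024}), and the codomain carries the natural order $\R_+$ rather than its opposite. These two reversals cancel, so the overall argument goes through with the inequality in the correct direction for monotonicity.

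First, I would fix $\alpha>1$ and $c\in[d-1]$, take $P\in(\V_{\rm d.c.})_n^d$ and a column-stochastic $m\times n$ matrix $T$, and set $Q:=TP$. Using the equivalence relation $\approx$ (which permits adding or deleting rows of zeros), I reduce WLOG to the situation where $p_j^{(d)}>0$ for every $j\in[n]$ and $q_i^{(d)}>0$ for every $i\in[m]$. The dominating-column structure makes this reduction clean: a zero in $p_j^{(d)}$ forces the whole row $p_j$ to vanish by the support condition, and likewise $q_i^{(d)}=\sum_j T_{ij}p_j^{(d)}=0$ together with $p_j^{(d)}>0$ for all $j$ forces $T_{ij}=0$ for all $j$, so the whole row $q_i$ vanishes. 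After this reduction, $T_i:=\sum_{j=1}^n T_{ij}>0$ for every $i\in[m]$.

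Next, I would invoke the $1$-homogeneity of $\varphi_{\alpha,c}$ (its exponents $\alpha$ and $1-\alpha$ sum to $1$) followed by its convexity on $\R_+\times\R_{>0}$ to estimate, for each $i\in[m]$,
\begin{equation}
\varphi_{\alpha,c}(q_i)=T_i\,\varphi_{\alpha,c}\!\left(\sum_{j=1}^n\frac{T_{ij}}{T_i}p_j\right)\leq T_i\sum_{j=1}^n\frac{T_{ij}}{T_i}\varphi_{\alpha,c}(p_j)=\sum_{j=1}^n T_{ij}\,\varphi_{\alpha,c}(p_j).
\end{equation}
Summing over $i\in[m]$ and using column stochasticity $\sum_{i=1}^m T_{ij}=1$ yields
\begin{equation}
\Phi_{\alpha,c}(Q)=\sum_{i=1}^m\varphi_{\alpha,c}(q_i)\leq\sum_{j=1}^n\varphi_{\alpha,c}(p_j)\sum_{i=1}^m T_{ij}=\sum_{j=1}^n\varphi_{\alpha,c}(p_j)=\Phi_{\alpha,c}(P),
\end{equation}
which is precisely the monotonicity condition $Q\preceq P\Rightarrow\Phi_{\alpha,c}(Q)\leq\Phi_{\alpha,c}(P)$ in the natural order of $\R_+$.

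The only delicate point is the WLOG reduction to full support in the last column; everything else is a clean convexity computation paralleling the concavity argument already carried out for $\mathcal{H}^{\rm n.d.}(S^d,\R_+^{\rm op})$. Once that reduction is made, no partial-support book-keeping of the form $i\in I(Q,C)$ is needed here, because the dominating-column hypothesis guarantees that whenever $p_j^{(d)}>0$ and $T_{ij}>0$ one automatically has $q_i^{(d)}>0$, so the column sums of $T$ restricted to the retained rows remain equal to $1$.
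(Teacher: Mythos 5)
Your proof is correct and follows essentially the same route as the paper's: the paper likewise takes $Q=TP$, uses the $1$-homogeneity and convexity of $\varphi_{\alpha,c}$ to get $\varphi_{\alpha,c}(q_i)\leq\sum_j T_{i,j}\varphi_{\alpha,c}(p_j)$, and sums over $i$ using column-stochasticity. Your explicit justification of the WLOG reduction to a fully supported last column (via the dominating-column condition and the $\approx$-equivalence) is a detail the paper leaves implicit, but it is the intended reading.
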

	
	\begin{proof}
		Let $\alpha>1$ and $c\in[d-1]$. Consider $P\in S^d_{\rm d.c.}$ with rows $p_1,\ldots,p_n\in\R_+^d$ and a stochastic $m\times n$-matrix $T$, and write $Q:=TP\in S^d_{\rm d.c.}$ with rows $q_1,\ldots,q_m\in\R_+^d$, and $T_i:=\sum_{j=1}^n T_{i,j}>0$. Using the convexity of $\varphi_{\alpha,c}$ and the fact that $\left(p_j\right)_d>0$ for all $j\in[n]$, we find 
		\begin{align}
			\Phi_{\alpha,c}(Q)&=\sum_{i=1}^m\varphi_{\alpha,c}(q_i)=\sum_{i=1}^m\varphi_{\alpha,c}\left(\sum_{j=1}^n T_{i,j}p_j\right)=\sum_{i=1}^m T_i\varphi_{\alpha,c}\left(\sum_{j=1}^n \frac{T_{i,j}}{T_i}p_j\right)\\
			&\leq\sum_{i=1}^m T_i\sum_{j=1}^n \frac{T_{i,j}}{T_i}\varphi_{\alpha,c}(p_j)=\sum_{j=1}^n \varphi_{\alpha,c}(p_j)\underbrace{\sum_{i=1}^m T_{i,j}}_{=1}=\Phi_{\alpha,c}(P), 
		\end{align}
		which completes the proof. 		
	\end{proof}

Let us also define, for all $P=\big(p^{(1)},\ldots,p^{(d)}\big)\in\mc V^d_{\rm d.c.}$ with $p^{(k)}=\big(p^{(k)}_1,\ldots,p^{(k)}_n\big)$ for $k=1,\ldots,d$ (where we assume WLOG that $p^{(d)}_i>0$ for all $i=1,\ldots,n$) and $c=1,\ldots,d-1$,
\begin{equation}
\Phi^\T_c(P):=\max_{1\leq i\leq n}\frac{p^{(c)}_i}{p^{(d)}_i}.
\end{equation}
We also define
\begin{equation}
\mathcal{H}^{\rm n.d.}(S^d_{\rm d.c.},\T\R_+):=\big\{\Phi^\T_{\beta,c}:=\big(\Phi^\T_c(\cdot)\big)^\beta\,\big|\,\beta>0\big\}.
\end{equation}
The functions in $\mathcal{H}^{\rm n.d.}(S^d_{\rm d.c.},\T\R_+\}$ are easily seen to be homomorphisms $S^d_{\rm d.c.}\to\T\R_+$. The conditions for large-sample or catalytic majorization of $P$ over $Q$ require in particular that $\Phi^\T_{\beta,c}(P)>\Phi^\T_{\beta,c}(Q)$ for all $\Phi^\T_{\beta,c}\in\mathcal{H}^{\rm n.d.}(S^d_{\rm d.c.},\T\R_+)$. Notice that this is equivalent to $\Phi^\T_c(P)>\Phi^\T_c(Q)$ for all $c=1,\ldots,d-1$, and that $\Phi^\T_c(P)=D_\infty\big(p^{(c)}\|p^{(d)}\big)$, where $D_\infty$ is the max-divergence. We next show that the homomorphisms in $\mathcal{H}^{\rm n.d.}(S^d_{\rm d.c.},\T\R_+)$ are monotone.

\begin{proposition}
All homomorphisms in $\mathcal{H}^{\rm n.d.}(S^d_{\rm d.c.},\T\R_+)$ are monotone.
\end{proposition}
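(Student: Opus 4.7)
The plan is to reduce the monotonicity of $\Phi^\T_{\beta,c}$ to that of $\Phi^\T_c$ and then establish the latter by a direct inequality argument mirroring the data-processing inequality for the max-divergence. Since the ordering on $\T\R_+$ is the usual order on $[0,\infty)$ and the map $x\mapsto x^\beta$ is strictly increasing on $\R_+$ for $\beta>0$, it is enough to prove that $P\rgeq Q$ implies $\Phi^\T_c(P)\geq\Phi^\T_c(Q)$ for every $c\in[d-1]$. As noted in the text, we may invoke the equivalence relation $\approx$ on $\mc V^d_{\rm d.c.}$ to delete zero rows and thereby assume that the $d$-th columns of $P$ and of $Q=TP$ have strictly positive entries throughout; this is consistent with the dominating column condition.

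Next, I would write $M:=\Phi^\T_c(P)=\max_j p^{(c)}_j/p^{(d)}_j$, so that by definition $p^{(c)}_j\leq M\,p^{(d)}_j$ for every row index $j$ of $P$ (including those where $p^{(c)}_j=0$, since then the inequality is trivial). For any stochastic $T$ giving $Q=TP$ and any row $i$ of $Q$, linearity then yields
\begin{equation}
q^{(c)}_i=\sum_j T_{i,j}\,p^{(c)}_j\;\leq\; M\sum_j T_{i,j}\,p^{(d)}_j \;=\; M\,q^{(d)}_i.
\end{equation}
Dividing by $q^{(d)}_i>0$ and taking the maximum over $i$ gives $\Phi^\T_c(Q)\leq M=\Phi^\T_c(P)$, which is exactly the required monotonicity. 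Raising both sides to the $\beta$-th power with $\beta>0$ then delivers $\Phi^\T_{\beta,c}(P)\geq\Phi^\T_{\beta,c}(Q)$.

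I do not expect any serious obstacle here: the argument is essentially the one-line data-processing proof for the max-divergence $D_\infty$, and the only place requiring a moment of care is ensuring that the denominators $q^{(d)}_i$ are nonzero and that the pointwise domination $p^{(c)}_j\leq M p^{(d)}_j$ is valid for all $j$, both of which follow from the definition of $\mc V^d_{\rm d.c.}$ combined with the freedom to eliminate zero rows via $\approx$. The additivity and multiplicativity of $\Phi^\T_{\beta,c}$ as a homomorphism $S^d_{\rm d.c.}\to\T\R_+$ are already taken for granted in the statement, so nothing further needs to be verified.
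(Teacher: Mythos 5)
Your argument is correct, and it takes a somewhat different route from the paper's. The paper proves monotonicity by observing that the row function $\fii^\T_{\beta,c}(x)=(x_c/x_d)^\beta$ has convex sublevel sets, hence is quasi-convex, and is invariant under positive scaling; it then bounds $\fii^\T_{\beta,c}\big(\sum_j (T_{i,j}/T_i)\,p_j\big)$ by $\max_j \fii^\T_{\beta,c}(p_j)$. You instead run the classical one-line data-processing argument for the max-divergence: the pointwise domination $p^{(c)}_j\leq M\,p^{(d)}_j$ with $M=\Phi^\T_c(P)$ is preserved under any nonnegative linear (in particular stochastic) map, giving $q^{(c)}_i\leq M\,q^{(d)}_i$ directly, after which raising to the power $\beta>0$ is harmless. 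The two proofs rest on the same underlying computation, but yours avoids the normalization by $T_i$ and the explicit appeal to quasi-convexity, so it is the more elementary packaging; the paper's quasi-convexity framing has the advantage of running in parallel with the concavity/convexity arguments used for the temperate homomorphisms $\Phi_{\underline{\alpha},C}$ and $\Phi_{\alpha,c}$ elsewhere in the section. Your handling of the boundary issues (eliminating all-zero rows via $\approx$ so that $p^{(d)}_j>0$ and $q^{(d)}_i>0$ throughout, which the dominating-column condition guarantees) matches the WLOG assumption the paper makes, and the reduction from $\Phi^\T_{\beta,c}$ to $\Phi^\T_c$ via strict monotonicity of $x\mapsto x^\beta$ is sound.
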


\begin{proof}
Let $\beta>0$ and $c\in[d-1]$. Note that, denoting the rows of $P\in\mc V^d_{\rm d.c.}$ by $p_1,\ldots,p_n$, we have
\begin{equation}
\Phi^\T_{\beta,c}(P)=\sup_{1\leq j\leq n}\fii^\T_{\beta,c}(p_j), 
\end{equation}
where $\fii^\T_{\beta,c}(x_1,\ldots,x_d)=(x_c/x_d)^\beta$ for all $x_1,\ldots,x_{d-1}\geq0$ and $x_d>0$. The sublevel sets
\begin{equation}
\left\{(x_1,\ldots,x_d)\in\R_+^{d-1}\times\R_{>0}\,\middle|\,\left(\frac{x_c}{x_d}\right)^\beta\leq a\right\},\qquad a>0,
\end{equation}
are immediately seen to be convex, implying that $\fii^\T_{c,\beta}$ is quasi-convex, i.e.,\ for all $x,y\in\R_+^{d-1}\times\R_{>0}$ and $t\in(0,1)$,
\begin{equation}
\fii^\T_{c,\beta}\big(tx+(1-t)y\big)\leq\max\{\fii^\T_{c,\beta}(x),\fii^\T_{c,\beta}(y)\}.
\end{equation}

Let $P,Q\in\mc V^d_{\rm d.c.}$, $P$ with rows $p_1,\ldots,p_n$ and $Q$ with rows $q_1,\ldots,q_m$. Assume that $P\rgeq Q$ and let $T=(T_{i,j})_{i,j}$ be a stochastic matrix such that $Q=TP$. We again denote $T_i=\sum_{j=1}^n T_{i,j}$ which we may assume to be positive WLOG. Noting that $\fii^\T_{c,\beta}(cx)=\fii^\T_{c,\beta}(x)$ for all $c>0$ and $x\in\R_+^{d-1}\times\R_{>0}$ and using the quasi-convexity of $\fii^\T_{c,\beta}$, we now have
\begin{align}
\Phi^\T_{c,\beta}(Q)&=\max_{1\leq i\leq m}\fii^\T_{c,\beta}(q_i)=\max_{1\leq i\leq m}\fii^\T_{c,\beta}\left(\sum_{j=1}^n T_{i,j}p_j\right)\\
&=\max_{1\leq i\leq m}\fii^\T_{c,\beta}\left(\sum_{j=1}^n\frac{T_{i,j}}{T_i}p_j\right)\leq\max_{1\leq i\leq m}\max_{1\leq j\leq n}\fii^\T_{c,\beta}(p_j)\\
&=\max_{1\leq j\leq n}\fii^\T_{c,\beta}(p_j)=\Phi^\T_{c,\beta}(P),
\end{align}
proving the claim.
\end{proof}
	
	\begin{proposition}
		The non-degenerate monotone homomorphisms in $S_{\rm d.c.}^d$ are exactly those in the union of $\mathcal{H}^{\rm n.d.}(S^d_{\rm d.c.},\R_+)$, $\mathcal{H}^{\rm n.d.}(S^d_{\rm d.c.},\R_+^{\rm op})$, and $\mathcal{H}^{\rm n.d.}(S^d_{\rm d.c.},\T\R_+)$. There are no nondegenerate monotone homomorphisms $S^d_{\rm d.c.}\to\T\R_+^{\rm op}$.
	\end{proposition}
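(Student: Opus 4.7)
The plan is to adapt the proof of Proposition \ref{pro:monhommin} to $S^d_{\rm d.c.}$, taking advantage of the fact that every non-zero row of a matrix in $S^d_{\rm d.c.}$ has a positive entry in column $d$. The single-row analysis leading to \eqref{eq:singlerowfunction} goes through verbatim, so any monotone homomorphism $\Phi:S^d_{\rm d.c.}\to\mathbb{K}$ is associated with some $\underline{\alpha}$ living in $A_-$, $A_+$, $B_-$, or $B$ according as $\mathbb{K}\in\{\R_+,\R_+^{\rm op},\T\R_+,\T\R_+^{\rm op}\}$.

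For $\mathbb{K}=\R_+^{\rm op}$, the proof of Proposition \ref{pro:monhommin} applies unchanged and yields exactly $\mathcal{H}^{\rm n.d.}(S^d_{\rm d.c.},\R_+^{\rm op})$; the only bookkeeping change is that $\Phi_{\underline{\alpha},C}$ and $\Phi_{\underline{\alpha},C\cup\{d\}}$ coincide on $S^d_{\rm d.c.}$ since column $d$ is always in the support. For $\mathbb{K}=\R_+$ and $\mathbb{K}=\T\R_+$, the task is to show that the unique strictly positive coordinate of $\underline{\alpha}\in A_-$ (resp.\ $\underline{\beta}\in B_-$) must lie in $[d-1]$ with only $\alpha_d$ (resp.\ $\beta_d$) as its non-zero partner, producing exactly $\Phi_{\alpha,c}$ (resp.\ $\Phi^{\mathrm T}_{\beta,c}$). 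Both bad configurations---the positive coordinate sitting at position $d$ (which then forces some $\alpha_{c'}<0$ in $[d-1]$ by non-degeneracy), and a second strictly negative coordinate in $[d-1]$ alongside the canonical positive one---are ruled out by one and the same two-row test: fix any $c'\in[d-1]$ with $\alpha_{c'}<0$ (resp.\ $\beta_{c'}<0$) and let $P\in S^d_{\rm d.c.}$ be the $2\times d$ matrix whose first row is all ones and whose second row is all ones except for a zero in position $c'$. Then $T=(1,1)$ gives $TP=Q$, where $Q$ is the single-row matrix with value $1$ in column $c'$ and value $2$ in every other column. Since $\alpha_{c'}<0$ (resp.\ $\beta_{c'}<0$) forces the second row of $P$ to be excluded from the defining sum or max, one obtains $\Phi(P)=1$, whereas $\Phi(Q)=2^{1-\alpha_{c'}}>1$ in the temperate case and $\Phi(Q)=2^{-\beta_{c'}}>1$ in the tropical case, contradicting monotonicity of $\Phi$ into $\R_+$ or $\T\R_+$.

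For $\mathbb{K}=\T\R_+^{\rm op}$ the plan is to rule out any non-zero $\underline{\beta}\in B$. For each $k\in[d]$, compare $P_k=(1,\ldots,1)$ with the two-row matrix $Q_k\in S^d_{\rm d.c.}$ in which every column except column $k$ is $(1/2,1/2)^{\mathrm T}$, and column $k$ itself is $(1/4,3/4)^{\mathrm T}$. Both matrices have unit column $1$-norms, all entries are strictly positive, and $T=(1,1)$ yields $TQ_k=P_k$, so $Q_k\rgeq P_k$. Using $\sum_j\beta_j=0$ one computes $\Phi(Q_k)=\max\{2^{-\beta_k},(3/2)^{\beta_k}\}$, which strictly exceeds $\Phi(P_k)=1$ whenever $\beta_k\neq 0$, contradicting the $\T\R_+^{\rm op}$-monotonicity requirement $\Phi(Q_k)\leq\Phi(P_k)$. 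Running this for every $k\in[d]$ forces $\underline{\beta}=0$, after which the full-support-row trick from the proof of Proposition \ref{pro:monhommin} collapses $\Phi$ to the constant $1$ on non-zero inputs, so $\Phi$ is degenerate. The principal obstacle throughout is to keep the counter-example matrices inside $S^d_{\rm d.c.}$ while still probing every relevant constraint: using all-positive entries does this cleanly for the tropical-opposite case, while the precise support pattern of the two-row $P$ above is what exploits the restricted summation in the $\R_+$ and $\T\R_+$ cases.
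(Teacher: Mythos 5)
Your parameter-pinning tests are efficient and mostly sound: the two-row matrix with a single zero in column $c'\in[d-1]$ does satisfy $\Phi(P)=1$ (via the analogue of Lemma \ref{lem:1or2}, restricted to $B\ni d$ so that the test matrices stay in $S^d_{\rm d.c.}$) while $\Phi(Q)=2^{1-\alpha_{c'}}>1$, and this correctly kills every strictly negative coordinate in $[d-1]$ for both $\R_+$ and $\T\R_+$; your $\T\R_+^{\rm op}$ argument is also correct and in fact a bit more direct than the paper's (which first shows $\Phi$ depends only on supports). However, there are two genuine gaps. First, in the $\R_+$ case you constrain only the exponent vector $\underline{\alpha}$, not the summation set. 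The decomposition you are implicitly importing from Proposition \ref{pro:monhommin} gives $\Phi(P)=\sum_{i:\,C\subseteq{\rm supp}\,p_i}\prod_k(p_i^{(k)})^{\alpha_k}$ for a character $C\supseteq{\rm supp}\,\underline{\alpha}$, and a candidate such as $\Phi_{(\alpha,0,\ldots,0,1-\alpha),\{1,2,d\}}$, which sums only over rows with $p_i^{(2)}>0$, is consistent with everything you prove yet is not an element of $\mathcal{H}^{\rm n.d.}(S^d_{\rm d.c.},\R_+)$. Your own test actually disposes of this if you also run it for $c'\in C$ with $\alpha_{c'}=0$ (then $\Phi(P)=1<2=\Phi(Q)$), but as written the step is missing; the paper handles it with the matrices $P_t$.

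The second gap is more serious. For $\T\R_+$, knowing $\underline{\beta}=(0,\ldots,\beta,\ldots,0,-\beta)$ only determines $\Phi$ on matrices all of whose rows have full support, because only such matrices decompose as $\boxplus$-sums of single rows lying in the semiring. To conclude that $\Phi=\Phi^\T_{\beta,c}$, i.e.\ that $\Phi(P)=\max_{i}\big(p_i^{(c)}/p_i^{(d)}\big)^{\beta}$ with the maximum running over \emph{all} rows of a general $P\in\mc V^d_{\rm d.c.}$, one must control the value of $\Phi$ on rows with vanishing entries in columns $1,\ldots,d-1$. This is the bulk of the paper's tropical argument: an upper bound via the $\delta$-regularized decomposition \eqref{eq:PhiPdecomposedtropical} together with $\Phi[B]=1$, and a matching lower bound obtained by comparing $P\boxplus(\varepsilon\,\cdots\,\varepsilon)^{\boxplus n}$ with the entrywise perturbation $P+\varepsilon$ and letting $\varepsilon\to0$. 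Your proposal omits this entirely, so it does not yet establish that every nondegenerate monotone homomorphism into $\T\R_+$ lies in $\mathcal{H}^{\rm n.d.}(S^d_{\rm d.c.},\T\R_+)$. The $\R_+^{\rm op}$ case, by contrast, is fine as you state it, since there the full character machinery of Proposition \ref{pro:monhommin} carries over verbatim.
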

	
	\begin{proof}
Throughout this proof, we consider an arbitrary matrix $P=\big(p^{(1)},\ldots,p^{(d)}\big)\in\mc V^d_{\rm d.c.}$ with rows $p^{(k)}=\big(p^{(k)}_1,\ldots,p^{(k)}_n\big)$ for $k=1,\ldots,d$. We also denote the rows of $P$ by $p_1,\ldots,p_n$.

We first consider the case of a monotone homomorphism $\Phi:S^d_{\rm d.c.}\to\mathbb{K}\in\{\R_+,\R_+^{\rm op}\}$. As in the minimally restricted case $S^d$, also now $\Phi$ is associated with $\underline{\alpha}\in A_+\cup A_-\setminus\{e_1,\ldots,e_d\}$. Lemmas \ref{lem:1or2} and \ref{clm:nontrivialB} can be proven in exactly the same way (except now we only consider $B\subsetneq[d]$ such that $d\in B$) and we will use these results in the following. Hence, the notion of $\Phi$-nontrivial sets can again be defined and $\Phi$ also has a character $C\subseteq[d]$. Through the same proof as the one for Proposition \ref{pro:monhommin}, we know that
\begin{equation}
\Phi(P)=\sum_{i:\,C\subseteq{\rm supp}\,p_i}\prod_{k\in{\rm supp}\,p_i}\big(p_i^{(k)}\big)^{\alpha_k}
\end{equation}
and that ${\rm supp}\,\underline{\alpha}=\{k\,|\,\alpha_k\neq0\}\subseteq C$. As for the case $\mathbb{K}=\R_+^{\rm op}$, we already know that $\underline{\alpha}\in A_+$, so this case is dealt with and the set of nondegenerate monotone homomorphisms coincides with $\mathcal{H}^{\rm n.d.}(S^d_{\rm d.c.},\R_+^{\rm op})$. Thus we may concentrate on the case $\mathbb{K}=\R_+$.

Assume that $\mathbb{K}=\R_+$. From the results of \cite{farooq2024}, we know that $\underline{\alpha}\in A_-$. Let us first concentrate on the subset of those $\underline{\alpha}=(\alpha_1,\ldots,\alpha_d)$ where $\alpha_1>1$ and $\alpha_2,\ldots,\alpha_d\leq 0$. We show that $\alpha_2=\cdots=\alpha_{d-1}=0$. Consider
\begin{equation}
P_t:=\left(\begin{array}{ccccc}
1/2&1-t&1/2&\cdots&1/2\\
1/2&t&1/2&\cdots&1/2
\end{array}\right),\qquad 0\leq t\leq\frac{1}{2}.
\end{equation}
Assume $\alpha_2\neq0$, hence $\alpha_2<0$. Then, by Lemma \ref{lem:1or2}(ii), $2\in C$. Whenever $0\leq s\leq t\leq 1/2$, we have $P_s\rgeq P_t$. Thus, $\Phi(P_\varepsilon)\leq\Phi(P_0)$, i.e.,
\begin{equation}
\label{eq:onlytwocolumns}
2^{-\alpha_1-\alpha_3-\cdots-\alpha_d}\left[(1-\varepsilon)^{\alpha_2}+\varepsilon^{\alpha_2}\right]\leq2^{-\alpha_1-\alpha_3-\cdots-\alpha_d}
\end{equation}
for all $\varepsilon\in(0,1/2]$. However, the left-hand side above diverges as $\varepsilon\to 0$, so $\alpha_2=0$. Similarly, $\alpha_3=\cdots=\alpha_{d-1}=0$. The same logic does not work for $\alpha_d$ because
\begin{equation}
\left(\begin{array}{cccc}
1/2&\cdots&1/2&1\\
1/2&\cdots&1/2&0
\end{array}\right)\notin\mc V^d_{\rm d.c.}, 
\end{equation}
hence we are not able to consider matrices similar to the $P_t$ for $0\leq t\leq1/2$ as above, but now with the last column equal to $(1-t,t)$. 

We know that $1,d\in C$ and we now show that there are no other columns in $C$. Assume that $C$ contains another column $k\in C\setminus\{1,d\}$. WLOG we assume that $k=2$. Then, \eqref{eq:onlytwocolumns} holds with $\alpha_1+\alpha_d=1$ and $\alpha_3=\cdots=\alpha_{d-1}=0$, yielding $1\leq1/2$, a contradiction. Hence $C=\{1,d\}$. Therefore, in the part of $A_-$ where $\alpha_1>1$, the only non-degenerate homomorphisms $S^d_{\rm d.c.}\to\R_+$ are all $\Phi_{\alpha,1}$ for $\alpha_d>1$. In the same way, we can deal with the parts of $A_-$ where $\alpha_c>1$, $c=2,\ldots,d-1$ to show that the only homomorphisms are all $\Phi_{\alpha,c}$, for $\alpha_c>1$, $c=2,\ldots,d-1$. 

It remains to be shown what the homomorphisms are for the part of $A_-$ where $\alpha_d>1$. Assume that $\alpha_d\geq1$. 
Considering matrices
\begin{equation}
Q_t:=\left(\begin{array}{cccc}
1-t&1/2&\cdots&1/2\\
t&1/2&\cdots&1/2
\end{array}\right),\qquad 0\leq t\leq\frac{1}{2},
\end{equation}
we find that $Q_s\rgeq Q_t$ for $0\leq s\leq t\leq 1/2$, so $\Phi(Q_\varepsilon)\leq\Phi(Q_0)$ for all $\varepsilon\in(0,1/2]$. However, for $\alpha_1<0$, this again leads to a contradiction, so $\alpha_1=0$. Similarly, $\alpha_2=\cdots=\alpha_{d-1}=0$. Thus, $\alpha_d=1$ since $\alpha_1+\cdots+\alpha_d=1$. However, this case falls under the set $\mathcal{H}^{\rm n.d.}(S^d_{\rm d.c.},\R_+^{\rm op})$ which is already dealt with. Thus, the set of nondegenerate monotone homomorphisms $S^d_{\rm d.c.}\to\R_+$ is exactly $\mathcal{H}^{\rm n.d.}(S^d_{\rm d.c.},\R_+)$.

We now consider the case $\mathbb{K}=\T\R_+$. From the results of \cite{farooq2024}, we know that there is $\underline{\beta}=(\beta_1,\ldots,\beta_d)\in B_-$ such that, when the columns of $P$ have full support,
\begin{equation}
\Phi(P)=\max_{1\leq i\leq n}\prod_{k=1}^d \big(p^{(k)}_i\big)^{\beta_k}.
\end{equation}
Proceeding similarly as above for $\mathbb{K}=\R_+$ and considering the matrices $P_t$ and $Q_t$, one can again show that $\beta_d<0$ and that there is $c\in\{1,\ldots,d-1\}$ such that $\beta_c>0$ and $\beta_k=0$ for $k\in[d-1]\setminus\{c\}$. Thus, $\beta:=\beta_c=-\beta_d$.

  

Note that we may assume $P$ to have columns that sum up to $1$ because, in the general case, 
\begin{equation}
\Phi(P)=\Phi\left(\|p^{(1)}\|_1\,\cdots\,\|p^{(d)}\|_1\right)\Phi(P^\prime) 
\end{equation}
for a matrix $P'$ with columns that sum to $1$, and $\Phi\left(\|p^{(1)}\|_1\,\cdots\,\|p^{(d)}\|_1\right)$ is known since $\Phi$ is evaluated on a matrix with common support. Thus, we assume for now that $\|p^{(k)}\|=1$ for $k=1,\ldots,d$. By monotonity, this implies that $\Phi(P)\geq1$. 
  
        Let $p_1,\ldots,p_n\in\R_+^d$ denote the rows of $P$. Let $\delta>0$, which we will fix later. If a row $p_i$ contains a zero, but $(p_i)_k>0$, then we denote by $\widetilde{p_i}$ the row $p_i$ with every $0$ replaced by a $1$. If a row $p_i$ contains a zero, and $(p_i)_k=0$ is one of them, then we denote by $\widetilde{p_i}^\delta$ the row $p_i$ with $\delta$ on position $k$ and every other $0$ replaced by a $1$. Then, notice that 
		\begin{equation}
			\label{eq:PhiPdecomposedtropical}
			P\boxplus\bigboxplus_{k\in{\rm supp}\,p_i\neq[d]}\widetilde{p_i}\boxplus\bigboxplus_{k\notin{\rm supp}\,p_i\neq[d]}\widetilde{p_i}^\delta=\bigboxplus_{{\rm supp}\,p_i=[d]}p_i\boxplus\bigboxplus_{k\in{\rm supp}\,p_i\neq[d]}\left(\begin{array}{c}
				\widetilde{p_i}\\
				p_i
			\end{array}\right)\boxplus\bigboxplus_{k\notin{\rm supp}\,p_i\neq[d]}\left(\begin{array}{c}
				\widetilde{p_i}^\delta\\
				p_i
			\end{array}\right). 
		\end{equation}

Let us note that Lemma \ref{lem:1or2} still holds (with the obvious modification that $d\in B$), especially $\Phi[B]=1$ whenever $d\in B\subseteq[d]$. Using this, we have for the terms in the penultimate $\boxplus$-sum of the RHS \eqref{eq:PhiPdecomposedtropical} (i.e.,\ when $i$ is such that $k\in{\rm supp}\,p_i$ or, simply, $i\in{\rm supp}\,p^{(k)}$ and ${\rm supp}\,p_i\neq [d]$)
\begin{equation}
\Phi\big(\widetilde{p_i}\boxplus p_i\big)=\Phi\left(\widetilde{p_i}\boxtimes\left(
\begin{array}{ccc}
1&\cdots&1\\
&{\rm supp}\,p_i&
\end{array}\right)\right)=\Phi(\widetilde{p_i})\Phi[{\rm supp}\,p_i]=\Phi(\widetilde{p_i})=\left(\frac{p^{(k)}_i}{p^{(d)}_i}\right)^\beta.
\end{equation}
Similarly, for the terms in the final $\boxplus$-sum of the RHS of \eqref{eq:PhiPdecomposedtropical}, we have
\begin{equation}
\Phi(\widetilde{p_i}^\delta\boxplus p_i)=\left(\frac{\delta}{p^{(d)}_i}\right)^\beta.
\end{equation}
Then \eqref{eq:PhiPdecomposedtropical} implies 
		\begin{align}
			\label{eq:Phitropicaldominatingcolumnalmost}
   \max&\left(\{\Phi(P)\}\cup\left\{\left(\frac{p^{(k)}_i}{p^{(d)}_i}\right)^\beta\,\middle|\,i:\ k\in{\rm supp}\,p_i\neq[d]\right\}\cup\left\{\left(\frac{\delta}{p^{(d)}_i}\right)^\beta\,\middle|\,i:\ k\notin{\rm supp}\,p_i\neq[d]\right\}\right)\\
   &=\max\left(\left\{\left(\frac{p^{(k)}_i}{p^{(d)}_i}\right)^\beta\,\middle|\,i:\ k\in{\rm supp}\,p_i\right\}\cup\left\{\left(\frac{\delta}{p^{(d)}_i}\right)^\beta\,\middle|\,i:\ k\notin{\rm supp}\,p_i\right\}\right).
		\end{align}
        By making $\delta$ suitably small, we can make sure that both maximizations above do not have an element of the form $\delta^\beta\big(p^{(d)}_i\big)^{-\beta}$, for some row $i$, as largest element. 
		
		For any $\varepsilon>0$, we have 
		\begin{equation}
			P\boxplus\left(\begin{array}{ccc}
				\varepsilon&\cdots&\varepsilon\\
				\vdots&\ddots&\vdots\\
				\varepsilon&\cdots&\varepsilon
			\end{array}\right)\rgeq P+\left(\begin{array}{ccc}
				\varepsilon&\cdots&\varepsilon\\
				\vdots&\ddots&\vdots\\
				\varepsilon&\cdots&\varepsilon
			\end{array}\right), 
		\end{equation}
		where $+$ denotes the entry-wise addition of matrices. This implies 
		\begin{equation}
			\Phi(P)\geq\max_{i\in[n]}\left(\frac{p^{(k)}_i+\varepsilon}{p^{(d)}_i+\varepsilon}\right)^\beta, 
		\end{equation}
        where we used the assumption that $\Phi(P)\geq1$. By continuity of the functions $\varepsilon\mapsto\frac{p^{(k)}_i+\varepsilon}{p^{(d)}_i+\varepsilon}$, $i\in[n]$, there exists $\varepsilon_{\rm max}$ such that the above maximum is obtained at the same $i_0$ for all $\varepsilon\leq\varepsilon_{\rm max}$. Then, taking $\varepsilon\rightarrow0$ on both sides of the inequality above, we deduce 
		\begin{equation}
			\Phi(P)\geq\left(\frac{p^{(k)}_{i_0}}{p^{(d)}_{i_0}}\right)^\beta\quad\forall\,i\in[n]. 
		\end{equation}
		This together with \eqref{eq:Phitropicaldominatingcolumnalmost} yields 
        \begin{equation}
            \Phi(P)=\max_{i\in[n]}\left(\frac{p^{(k)}_i}{p^{(d)}_i}\right)^\beta. 
        \end{equation}
		
		Finally, we consider the case $\mathbb{K}=\T\R_+^{\rm op}$. Exactly as in the proof of Proposition 13 in \cite{farooq2024} (see Equations (89), (90)), we can show that $\Phi(P)$ only depends on the supports of the columns of $P$. More precisely, if $P,Q\in S_{\rm d.c.}^d$ satisfy ${\rm supp}\,p^{(k)}={\rm supp}\,q^{(k)}$ for all $k\in[d]$, then $\Phi(P)=\Phi(Q)$. It then follows that for any $Q\in S_{\rm d.c.}^d$, we have $\Phi(Q)\leq\Phi(\|q^{(1)}\|_1\,\cdots\,\|q^{(d)}\|_1)=\Phi(1\,\cdots\,1)=1$, since $Q\rgeq(\|q^{(1)}\|_1\,\cdots\,\|q^{(d)}\|_1)$ and the latter one-row matrix has the same support structure as $(1\,\cdots\,1)$. $P$ has at least one row not containing any $0$. Assume WLOG it is the first row. Then $P$ majorizes and is majorized by 
		\begin{equation}
			\left(\begin{array}{ccc}
				p_1^{(1)}/2&\cdots&p_1^{(d)}/2\\
				p_1^{(1)}/2&\cdots&p_1^{(d)}/2\\
				p_2^{(1)}&\cdots&p_2^{(d)}\\
				\vdots&\vdots&\vdots\\
				p_n^{(1)}&\cdots&p_n^{(d)}
			\end{array}\right), 
		\end{equation}
		hence 
        \begin{equation}
            \Phi(P)=\max\left(\underbrace{\Phi\left(p_1^{(1)}/2\;\cdots\;p_1^{(d)}/2\right)}_{=1},\underbrace{\Phi\left(\begin{array}{ccc}
				p_1^{(1)}/2&\cdots&p_1^{(d)}/2\\
				p_2^{(1)}&\cdots&p_2^{(d)}\\
				\vdots&\vdots&\vdots\\
				p_n^{(1)}&\cdots&p_n^{(d)}
			\end{array}\right)}_{\leq1}\right)=1. 
        \end{equation}
        Therefore, there are no non-degenerate monotone homomorphisms for $\mathbb{K}=\T\R_+^{\rm op}$. 
	\end{proof}

\begin{remark}
Note that, compared to the minimally restricted semiring $S^d$, some elements in $\mathcal{H}^{\rm n.d.}(S^d_{\rm d.c.},\R_+^{\rm op})$ coincide. Since the final column dominates all the other columns in all matrices in $\mc V^d_{\rm d.c.}$, it immediately follows that, whenever $\underline{\alpha}=(\alpha_1,\ldots,\alpha_{d-1},0)\in A_+$ and ${\rm supp}\,\underline{\alpha}\subseteq C\subseteq[d]$, we have $\Phi_{\underline{\alpha},C}=\Phi_{\underline{\alpha},C\cup\{d\}}$. See also Figure \ref{fig:DominatingColumn} for a depiction of the monotone homomorphisms in the case $d=3$. The figure also shows the derivations which we study in the next subsection.
\end{remark}

	\subsection{Monotone derivations}
	

    Recall that the degenerate monotone homomorphisms are all $\Phi^{(k)}$ for $k\in[d]$. Note that we only need to consider monotone derivations of these homomorphisms up to interchangeability. Similarly as for the minimal restrictions semiring, interchangeability allows us to show a result analogous to Lemma \ref{lem:interchangeability}, which we will use to characterize all monotone derivations, up to interchangeability. 
	
	\begin{proposition}
		\label{pro:derivationsdominatingcolumn}
		For each $k\in[d-1]$, any monotone derivation $\Delta$ of the degenerate homomorphism $\Phi^{(k)}$, up to interchangeability, satisfies $\Delta(P)=\gamma D_1(p^{(k)}\|p^{(d)})$ for all $P\in S^d_{\rm d.c}$, for some $\gamma\geq0$. The only monotone derivation $\Delta$ of the degenerate homomorphism $\Phi^{(d)}$, up to interchangeability, vanishes, i.e.,\ $\Delta(P)=0$ for all $P\in S^d_{\rm d.c}$. 
	\end{proposition}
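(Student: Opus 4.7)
The proof follows the blueprint of Proposition \ref{pro:derivations} but requires an additional regularity step. As in Lemma \ref{lem:interchangeability}, the map $\delta(x):=-\Delta(x,\ldots,x)$ is an additive derivation on $\R_+$ (all constant rows and their $\boxplus$- and $\boxtimes$-combinations remain in $\mathcal V^d_{\rm d.c.}$), so up to interchangeability we may assume $\Delta(x,\ldots,x)=0$ for every $x\geq 0$. The analogue of Lemma \ref{lem:derivationszero}, applied to \eqref{eq:basicmatrixequation} for any $B\subseteq[d]$ containing $d$ (so all matrices involved stay in $\mathcal V^d_{\rm d.c.}$), then yields $\Delta[B]=0$.

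Given $P\in\mathcal V^d_{\rm d.c.}$ with rows $p_i$ (WLOG $p_i^{(d)}>0$ for all $i$, since any row with $p_i^{(d)}=0$ is identically zero by the dominating-column condition), factor
\[
p_i=(p_i^{(d)},\ldots,p_i^{(d)})\boxtimes\bigl(p_i^{(1)}/p_i^{(d)},\ldots,p_i^{(d-1)}/p_i^{(d)},1\bigr)
\]
and apply the Leibniz rule at $\Phi^{(k)}$. Since $\Delta$ annihilates the first factor, $\Delta(P)=\sum_i p_i^{(d)}f(\vec y_i)$, where $\vec y_i=(p_i^{(1)}/p_i^{(d)},\ldots,p_i^{(d-1)}/p_i^{(d)})$ and $f:\R_+^{d-1}\to\R$ is defined by $f(\vec y):=\Delta\bigl((y_1,\ldots,y_{d-1},1)\bigr)$. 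Applying the Leibniz rule once more to $(\vec a,1)\boxtimes(\vec b,1)=(\vec a\odot\vec b,1)$ yields the functional equation
\[
f(\vec a\odot\vec b)=\begin{cases}f(\vec a)+f(\vec b),&k=d,\\ f(\vec a)\,b_k+a_k\,f(\vec b),&k\in[d-1].\end{cases}
\]

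For $k=d$, write $\mathbf 1_{\neq j}\in\R_+^{d-1}$ for the vector with $0$ at position $j$ and $1$ elsewhere. The idempotence $\mathbf 1_{\neq j}\odot\mathbf 1_{\neq j}=\mathbf 1_{\neq j}$ and the functional equation force $f(\mathbf 1_{\neq j})=0$, and then $f(\mathbf 1_{\neq j}\odot\vec b)=f(\vec b)$ shows that $f$ is unchanged when the $j$-th coordinate is replaced by $0$; iterating over all $j$ yields $f\equiv 0$, hence $\Delta\equiv 0$. For $k\in[d-1]$, the same device applied for every $j\neq k$ produces $f(\mathbf 1_{\neq j})=0$ and $f(\mathbf 1_{\neq j}\odot\vec b)=f(\vec b)$, so $f(\vec b)=\tilde f(b_k)$ depends only on the $k$-th coordinate; setting $a=0$ in the functional equation forces $\tilde f(0)=0$. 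Dividing $\tilde f(ab)=\tilde f(a)b+a\tilde f(b)$ by $ab$ for $a,b>0$ then shows $u(x):=\tilde f(x)/x$ satisfies the Cauchy logarithm equation $u(xy)=u(x)+u(y)$ on $(0,\infty)$.

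The main technical obstacle is to promote $u$ from a possibly pathological Cauchy logarithm to a genuine logarithm, and monotonicity of $\Delta$ supplies the needed regularity. The two-row matrix with rows $(0,\ldots,0,a,0,\ldots,0,b)$ (with $a$ at position $k$ and $b$ at position $d$) and $(0,\ldots,0,b')$ (with $b'$ at position $d$) majorises, via the row-summing stochastic map $(1,1)$, the single row $(0,\ldots,0,a,0,\ldots,0,b+b')$. Applying $\Delta$ yields $b\tilde f(a/b)\geq(b+b')\tilde f(a/(b+b'))$, which in terms of $u$ reads $u(y)\geq u(ty)$ for every $y>0$ and every $t\in(0,1]$; hence $u$ is non-decreasing. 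Passing to $v(s):=u(e^s)$, a monotone Cauchy additive function on $\R$ and therefore linear, we obtain $u(x)=\gamma\log x$ for some $\gamma\geq 0$. Substituting back produces
\[
\Delta(P)=\gamma\sum_i p_i^{(k)}\log\bigl(p_i^{(k)}/p_i^{(d)}\bigr)=\gamma D_1\bigl(p^{(k)}\bigl\|p^{(d)}\bigr),
\]
as claimed.
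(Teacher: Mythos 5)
Your overall strategy is sound and in places more self-contained than the paper's proof (which imports the full-support analysis of Proposition~14 in \cite{farooq2024} and then kills the cross-terms by a monotonicity argument), but there is a recurring domain problem that makes several of your steps meaningless as written. A single-row matrix belongs to $\mathcal{V}^d_{\rm d.c.}$ only if all of its entries are positive or all are zero: a row such as $(y_1,\ldots,y_{d-1},1)$ with some $y_j=0$ violates $\bigcap_k{\rm supp}\,p^{(k)}\neq\emptyset$, so $\Delta$ is simply not defined on it. This bites in at least three places. First, you declare $f$ on all of $\R_+^{d-1}$, but $f(\vec y)=\Delta\big((y_1,\ldots,y_{d-1},1)\big)$ only makes sense for $\vec y\in\R_{>0}^{d-1}$, and $\Delta(P)=\sum_i p_i^{(d)}f(\vec y_i)$ cannot be obtained from additivity when some row of $P$ has zeros, because such a $P$ is not the $\boxplus$-sum of its rows inside the semiring. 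Second, and most seriously, the idempotence argument evaluates $f$ at $\mathbf{1}_{\neq j}$ and at $\mathbf{1}_{\neq j}\odot\vec b$, both of which have a vanishing coordinate, while your functional equation was only derived for strictly positive $\vec a,\vec b$. This is not cosmetic: on $\R_{>0}^{d-1}$ alone the functional equation admits the solutions $f(\vec y)=\sum_{\ell\neq k}\gamma_\ell\,y_k\log(y_k/y_\ell)$, which genuinely depend on coordinates other than $y_k$, so additional input is required to eliminate the $\gamma_\ell$ with $\ell\neq d$ --- precisely the step the paper handles by testing monotonicity of $\Delta$ on explicit two-row matrices with a zero in column $\ell$. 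Third, for $d\geq3$ the two-row matrix in your regularity step has entire zero columns and again lies outside $\mathcal{V}^d_{\rm d.c.}$.

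All three defects are repairable with the padding device the paper uses throughout: replace a deficient row $x$ by the two-row element $\tilde x^T\boxplus x^T$ (with $\tilde x$ the row whose zeros are replaced by $1$), or equivalently multiply a full-support row by $\left(\begin{smallmatrix}1&\cdots&1\\&B&\end{smallmatrix}\right)$ with $d\in B$, for which $\Delta[B]=0$. With this repair your idempotence step becomes the identity $f(\vec b\,|_{b_j\to 1})=f(\vec b)$ for $j\in[d-1]\setminus\{k\}$ (replacement by $1$, not by $0$), which still yields $f(\vec b)=\tilde f(b_k)$ and would give an attractive purely algebraic alternative to the paper's monotonicity argument for killing the cross-terms; your Cauchy-equation regularization is likewise fine once the test matrices are padded with positive entries in the irrelevant columns. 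As written, however, the key steps evaluate $\Delta$ outside its domain and the proof does not go through.
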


    Let $\Delta:S^d_{\rm d.c}\to\mathbb{\R}$ be a monotone $\Phi^{(k)}$-derivation for some $k\in[d]$. For any $B\subsetneq[d]$, $d\in B$, we write 
    \begin{equation}
        \Delta[B]:=\Delta\left(\begin{array}{ccc}
                1&\cdots&1\\
                &B&
            \end{array}\right). 
    \end{equation}
Before going on to the proof, note that, whenever $B\subseteq[d]$ is such that $d\in B$, then $\Delta[B]=0$. The proof is essentially the same as that of Lemma \ref{lem:derivationszero} and will not be repeated here.
    

\begin{proof}[Proof of Proposition \ref{pro:derivationsdominatingcolumn}]
Let $\Delta:S_{\rm d.c.}^d\to\mathbb{\R}$ be a monotone $\Phi^{(k)}$-derivation for some $k\in[d]$. For any $x\in\R_+^d$ with $d\in{\rm supp}\,x$ we find using the above note and the Leibniz rule for $\Delta$,
\begin{align}
\Delta\big(\tilde{x}^T\boxplus x^T\big)&=\Delta\left(\tilde{x}^T\boxtimes\left(\begin{array}{ccc}
1&\cdots&1\\
&{\rm supp}\,x&
\end{array}\right)\right)\\
&=\Delta(\tilde{x}^T)\Phi^{(k)}[{\rm supp}\,x]+\Phi^{(k)}(\tilde{x}^T)\underbrace{\Delta[{\rm supp}\,x]}_{=0}=\left\{\begin{array}{ll}
2\Delta(\tilde{x}^T)&{\rm if}\ k\in{\rm supp}\,x\\
\Delta(\tilde{x}^T)&{\rm otherwise}
\end{array}\right.. 
\end{align}
Through a similar calculation as in \eqref{eq:Phihajotus}, we now find that
\begin{equation}
\Delta(P)=\sum_{i\in{\rm supp}\,p^{(k)}}\Delta(\widetilde{p_i}) 
\end{equation}
for any $P=(p^{(1)},\ldots,p^{(d)})\in S_{\rm d.c.}^d$ with rows $p_i$. Again, $\widetilde{p_i}$ is the row $p_i$ where the vanishing entries are replaced with 1. It follows that $\Delta(P)$ can be seen as $\Delta$ evaluated on a matrix with non-vanishing entries, more specifically the matrix resulting from taking only the rows in $P$ corresponding to ${\rm supp}\,p^{(k)}$ and replacing all vanishing entries with $1$. Hence, we can use the results in the proof of Proposition 14 in \cite{farooq2024} for the equal-support case. Combining Equations (111) and (117) in \cite{farooq2024}, we find 
\begin{equation}
\Delta(P)=\sum_{\ell\neq k}\gamma_\ell\sum_{i\in{\rm supp}\,p^{(k)}}p_i^{(k)}\log\left(\frac{p_i^{(k)}}{\widetilde{p_i^{(\ell)}}}\right) 
\end{equation}
for some constants $\gamma_\ell\geq0$, $\ell\in[d]\setminus\{k\}$. 
        
        Next, we show that $\gamma_\ell=0$ if $\ell\neq d$. Let $\ell\in[d-1]\setminus\{k\}$. Consider the following matrices in $S_{\rm d.c.}^d$ 
        \begin{align}
            P:=\left(\begin{array}{ccccccc}
                1/4&\cdots&1/4&1&1/4&\cdots&1/4\\
                3/4&\cdots&3/4&0&3/4&\cdots&3/4
            \end{array}\right),\\
            Q:=\left(\begin{array}{ccccccc}
                1/2&\cdots&1/2&1&1/2&\cdots&1/2\\
                1/2&\cdots&1/2&0&1/2&\cdots&1/2
            \end{array}\right),
        \end{align}
        where the column containing a $0$ is the $\ell$-th column. Notice that $P\rgeq Q$. A simple calculation shows that 
        \begin{equation}
            \Delta(P)=\gamma_\ell\left(2-\frac{3}{4}\log3\right)\leq\gamma_\ell=\Delta(Q), 
        \end{equation}
        with strict inequality if $\gamma_\ell>0$. This would imply that $\Delta$ is not monotone. Hence, we find that $\gamma_\ell=0$ for all $\ell\in[d-1]\setminus\{k\}$. Therefore, if $k\neq d$, 
        \begin{equation}
            \Delta(P)=\gamma_d\sum_{i\in{\rm supp}\,p^{(k)}}p_i^{(k)}\log\left(\frac{p_i^{(k)}}{\widetilde{p_i^{(d)}}}\right)=\gamma_d D_1\left(p^{(k)}\|p^{(d)}\right), 
        \end{equation}
        where we used that ${\rm supp}\,p^{(k)}\subseteq {\rm supp}\,p^{(d)}$ to identify the sum as the Kullback-Leibler divergence $D_1$. Note that the monotonicity of $\Delta$ follows from the data-processing inequality satisfied by $D_1$. For the case $k=d$, we have $\gamma_\ell=0$ for all $\ell\neq d$, hence $\Delta(P)=0$ for every $P\in S_{\rm d.c.}^d$. 		
	\end{proof}

\subsection{Power universal elements}\label{subsec:poweruniversaldc}

The following Proposition characterizes the power universal elements in $S_{\rm d.c.}^d$. 

\begin{proposition}\label{pro:poweruniversaldcgeneral}
    Let $U=(u^{(1)},\ldots,u^{(d)})\in S_{\rm d.c.}^d$ with all $u^{(k)}$ for $k\in[d]$, probability distributions. Then the following are equivalent: 
    \begin{enumerate}[(i)]
        \item $U$ is power universal. 
        \item It holds that ${\rm supp}\,u^{(k)}\subsetneq{\rm supp}\,u^{(d)}$ for all $k\in[d-1]$, and ${\rm supp}\,u^{(k)}\nsubseteq{\rm supp}\,u^{(k^\prime)}$ for all $k,k^\prime\in[d-1]$, $k\neq k^\prime$. 
        \item Any nondegenerate homomorphism $\Phi\in\mathcal{H}^{\rm n.d.}(S^d_{\rm d.c.},\R_+)\cup\mathcal{H}^{\rm n.d.}(S^d_{\rm d.c.},\R_+^{\rm op})\cup\mathcal{H}^{\rm n.d.}(S^d_{\rm d.c.},\T\R_+)$ satisfies $\Phi(U)\neq1$. 
    \end{enumerate}
\end{proposition}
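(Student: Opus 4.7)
The plan is to establish the cyclic implications (ii) $\Rightarrow$ (i) $\Rightarrow$ (iii) $\Rightarrow$ (ii), closely following the strategy used for Proposition \ref{lem:poweruniversalconditions} but adapting each step to the dominating-column constraint.

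To prove (ii) $\Rightarrow$ (i), I first extract distinguished rows of $U^{\boxtimes(d-1)}$. The strict inclusion ${\rm supp}\,u^{(k)}\subsetneq{\rm supp}\,u^{(d)}$ provides, for each $k\in[d-1]$, an index $i_k^{(0)}\in{\rm supp}\,u^{(d)}\setminus{\rm supp}\,u^{(k)}$; the Kronecker product of the rows of $U$ labelled $i_1^{(0)},\ldots,i_{d-1}^{(0)}$ then has its only positive entry in column $d$. The mutual non-containment in $[d-1]$ provides, for each $k\in[d-1]$ and each $k'\in[d-1]\setminus\{k\}$, an index $i_{k,k'}\in{\rm supp}\,u^{(k)}\setminus{\rm supp}\,u^{(k')}$; combining these $d-2$ indices with one further index from ${\rm supp}\,u^{(k)}$ produces a row of $U^{\boxtimes(d-1)}$ supported exactly on $\{k,d\}$. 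Since $\bigcap_c{\rm supp}\,u^{(c)}\neq\emptyset$, the power $U^{\boxtimes(d-1)}$ also carries a full-support row. Applying a lumping stochastic map I then build an auxiliary matrix $\tilde U\in\mc{V}^d_{\rm d.c.}$ with one full-support row, one row supported on $\{k,d\}$ for each $k\in[d-1]$, and one row supported on $\{d\}$. Given $Q\preceq P$ in $\mc{V}^d_{\rm d.c.}$ with columns of unit $1$-norm, I construct a stochastic matrix $S_m$ expressing $P$ as a stochastic image of $\tilde U^{\boxtimes m}$; this is possible because the weights of the full-support contributions shrink like $\|\tilde u_0^{(k)}\|_1^m\to 0$, while the $\{k,d\}$-supported rows provide enough flexibility to reach any support pattern allowed in $\mc{V}^d_{\rm d.c.}$. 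Combining this with $(1\,\cdots\,1)\preceq Q$ yields $P\preceq Q\boxtimes U^{\boxtimes((d-1)m)}$, which is the required power-universality property. The main obstacle is bookkeeping the extra column-$d$ mass carried by the $\{k,d\}$-supported rows, which must be absorbed either by adjusting the column-$d$ entries of $S_m$ or by offsetting them against the $\{d\}$-supported row; this is handled by an explicit adjustment analogous to the one used in the minimal restriction proof.

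The implication (i) $\Rightarrow$ (iii) is standard. Given $U$ power universal and $\Phi$ a non-degenerate monotone homomorphism into $\R_+$, $\R_+^{\rm op}$, or $\T\R_+$, I apply power universality to an arbitrary pair $Q\preceq P$ to obtain $P\preceq Q\boxtimes U^{\boxtimes m}$. Monotonicity combined with multiplicativity forces $\Phi(P)$ and $\Phi(Q)$ to sandwich $\Phi(Q)\Phi(U)^m$; assuming $\Phi(U)=1$ collapses this to $\Phi(P)=\Phi(Q)$ for every $Q\preceq P$, contradicting the non-degeneracy of $\Phi$.

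Finally (iii) $\Rightarrow$ (ii) is proved by contrapositive. If ${\rm supp}\,u^{(k)}={\rm supp}\,u^{(d)}$ for some $k\in[d-1]$, then $\Phi_{e_d,\{k,d\}}\in\mathcal{H}^{\rm n.d.}(S^d_{\rm d.c.},\R_+^{\rm op})$ satisfies $\Phi_{e_d,\{k,d\}}(U)=\sum_{i\in{\rm supp}\,u^{(k)}}u^{(d)}_i=1$. If instead ${\rm supp}\,u^{(k)}\subseteq{\rm supp}\,u^{(k')}$ for some distinct $k,k'\in[d-1]$, then $\Phi_{e_k,\{k,k'\}}(U)=\sum_{i\in{\rm supp}\,u^{(k)}}u^{(k)}_i=1$. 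Both $\Phi_{e_d,\{k,d\}}$ and $\Phi_{e_k,\{k,k'\}}$ are non-degenerate, as is readily verified by comparing their values on a matrix with a vanishing entry in the relevant column against its averaged counterpart under a non-trivial stochastic map. Thus in either case (iii) fails, completing the cycle.
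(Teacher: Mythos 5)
Your proposal is correct and follows essentially the same route as the paper: distinguished rows of $U^{\boxtimes(d-1)}$ supported on $\{k,d\}$ and on $\{d\}$, a near-identity auxiliary matrix whose powers (after lumping) make the off-structure mass vanish, an explicit stochastic map onto $P$, and homomorphism tests of the form $\Phi_{e_d,\{k,d\}}$ and $\Phi_{e_k,\{k,k',d\}}$ for the support conditions. The only caveat is that the bookkeeping you defer in (ii) $\Rightarrow$ (i) — absorbing the column-$d$ mass of the $\{k,d\}$-rows — is precisely where the paper needs the iterated lumping recursion $U_{\ell+1}=T(\tilde U\boxtimes U_\ell)$ and the explicit vectors $t^{(k)}_v$, $t^{(d)}_{v,w}$, so that step is genuinely more involved than its minimal-restrictions analogue, though your sketch correctly identifies the structure.
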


\begin{proof}
    (iii) $\Rightarrow$ (ii): Note that for a monotone homomorphism $\Phi:S^d_{\rm d.c.}\to\mathbb{\R^{\rm op}}$, $\Phi(U)\neq1$ implies $\Phi(U)<1$. Since $\Phi_{e_d,\{k,d\}}(U)<1$ for $k\in[d-1]$, there is a row with a vanishing entry at $k$ and a positive value at $d$. Similarly, since $\Phi_{e_k,\{k,k^\prime,d\}}(U)<1$ for $k,k^\prime\in[d-1]$, $k\neq k^\prime$, there is a row with a vanishing entry at $k^\prime$ and a positive value at $k$. Hence, the conditions in (ii) are satisfied.  
    
    (ii) $\Rightarrow$ (i): From (ii), it follows that for each $k=2,\ldots,d-1$, $U$ has a row with positive entries at positions $1$ and $d$, and $0$ at position $k$. Then $U^{\boxtimes (d-2)}$ will contain the $\boxtimes$-product of these $d-2$ rows, which is a row with positive entries at positions $1$ and $d$, and $0$ everywhere else. Similarly, for the other columns $k^\prime=2,\ldots,d-2$, $U^{\boxtimes (d-2)}$ will contain a row with positive entries at positions $k^\prime$ and $d$ and $0$ everywhere else. Note that $U^{\boxtimes (d-1)}=U^{\boxtimes (d-2)}\boxtimes U$ will also contain rows of these specific types. From (ii) it also follows that for each $k=1,\ldots,d-1$, $U$ contains a row with a positive entry at position $d$ and $0$ at position $k$. Then $U^{\boxtimes (d-1)}$ will contain the $\boxtimes$-product of these $d-1$ rows, which is a row with a positive entry at position $d$ and $0$ everywhere else. One sees that it is possible to sum the rows in $U^{\boxtimes (d-1)}$ in such a way that 
    \begin{equation}
        \label{eq:almostidentity}
        U^{\boxtimes (d-1)}\rgeq\left(\begin{array}{cccc}
            a_1&\cdots&a_{d-1}&b_0\\
            1-a_1&\cdots&0&b_1\\
            0&\ddots&\vdots&\vdots\\
            0&\cdots&1-a_{d-1}&b_{d-1}\\
            0&\cdots&0&1-b			
        \end{array}\right):=\tilde{U}, 
    \end{equation}
    where $b:=\sum_{i=0}^{d-1}b_i$, and $a_k,b_i,b\in(0,1)$ for $k=1,\ldots,d-1$ and $i=0,\ldots,d-1$. 
    Define the stochastic map (matrix) $T$ mapping probability vectors on $\{0,1,\ldots,d\}^2$ into probability vectors on $\{0,1.\ldots,d\}$ through
    \begin{equation}
    (Tp)_m=p_{m,m}+\sum_{j=0}^{m-1}\big(p_{j,m}+p_{m,j}\big),\qquad m=0,\ldots,d,
    \end{equation}
    for all $p=(p_{j,k})_{i,j=0}^d$ where empty sums $\sum_{j=0}^{-1}(\cdots)$ vanish. Also define matrices $U_\ell$ through $U_1=\tilde{U}$ and $U_{\ell+1}=T(\tilde{U}\boxtimes U_\ell)$ for all $\ell=1,2,\ldots$. One easily sees that $\tilde{U}^{\boxtimes\ell}\rgeq U_\ell$ and a straightforward calculation combined with an induction argument shows that, for all $\ell\in\N$, there are $b^{(\ell)}_i$, $i=0,\ldots,d-1$ such that $b^{(\ell)}_0+\cdots+b^{(\ell)}_{d-1}=b^\ell$ and
    \begin{equation}\label{eq:majorizedbyU}
        U_\ell= \left(\begin{array}{cccc}
            a_1^\ell&\cdots&a_{d-1}^\ell&b_0^{(\ell)}\\
            1-a_1^\ell&\cdots&0&b_1^{(\ell)}\\
            0&\ddots&\vdots&\vdots\\
            0&\cdots&1-a_{d-1}^\ell&b_{d-1}^{(\ell)}\\
            0&\cdots&0&1-b^\ell		
        \end{array}\right).
    \end{equation}
    Hence, by choosing $\ell$ sufficiently large, we can make $a_k^\ell$ for $k=1,\ldots,d-1$ and $b_i^{(\ell)}$ for $i=0,\ldots, d-1$ arbitrarily small. 
    
Let $P=\big(p^{(1)},\ldots,p^{(d)}\big)\in \mc V_{\rm d.c.}^d$ with columns of unit $1$-norm, where $p^{(k)}=\big(p^{(k)}_1,\ldots,p^{(k)}_n\big)$. We may assume WLOG that $p^{(k)}_1>0$ for $k=1,\ldots,d$. Let us write $p_1:=\min_{1\leq k\leq d-1}p^{(k)}_1>0$, $t^{(0)}:=(1,0,\ldots,0)$ (with $n$ entries), and 
\begin{equation}
t^{(k)}_v:=\frac{1}{1-v}\big(p^{(k)}-vt^{(0)}\big)
\end{equation}
for $v\in(0,p_1)$ and $k=1,\ldots,d-1$. For any $v=(v_1,\ldots,v_{d-1})\in(0,p_1)^{d-1}$ and $w=(w_0,\ldots,w_{d-1})\in\R_{>0}^d$ such that $w_0+\cdots+w_{d-1}<1$, let us also define
\begin{equation}
t^{(d)}_{v,w}:=\frac{1}{1-w_0-\cdots-w_{d-1}}\big(p^{(d)}-w_0 t^{(0)}-w_1 t^{(1)}_{v_1}-\cdots-w_{d-1}t^{(d-1)}_{v_{d-1}}\big).
\end{equation}
A sufficient condition for $t^{(d)}_{v,w}$ to be a probability vector is
\begin{align}
p^{(d)}_1-w_0-\sum_{k=1}^{d-1}w_k\frac{p^{(k)}_1}{1-p_1}&\geq0,\\
p^{(d)}_i-\sum_{k=1}^{d-1}w_k\frac{p^{(k)}_i}{1-p_1}&\geq0,\qquad i=2,\ldots,n,
\end{align}
independent of $v_1,\ldots,v_{d-1}$. Thus, there is $w=(w_0,\ldots,w_{d-1})\in\R_{>0}^{d}$ such that $t^{(d)}_{v,w'}$ is a probability vector for all $v\in(0,p_1)^{d-1}$ and $w'\in\R_{>0}^d$ whose entries are upper-bounded by those of $w$. Suppose then that $\ell\in\N$ is large enough so that $a_k^\ell<p_1$ for $k=1,\ldots,d-1$ and $b^\ell<w_i$ (so that $b^{(\ell)}_i\leq b^\ell<w_i$) for $i=0,\ldots,d-1$ where $a_k$, $b_i$, and $b^{(\ell)}_i$ are as in \eqref{eq:almostidentity} and \eqref{eq:majorizedbyU}. Setting $v:=\big(a_1^\ell,\ldots,a_{d-1}^\ell\big)$ and $w':=\big(b_0^{(\ell)},\ldots,b_{d-1}^{(\ell)}\big)$ and $t^{(k)}:=t^{(k)}_{v}$ for $k=1,\ldots,d-1$ and $t^{(d)}:=t^{(d)}_{v,w'}$, we may set up the stochastic matrix $T=\big(t^{(0)},\ldots,t^{(d)}\big)$. It follows that we may write $TU_\ell=P$, i.e.,\ $U^{\boxtimes\ell(d-1)}\rgeq\tilde{U}^{\boxtimes\ell}\rgeq U_\ell\rgeq P$.

We now have shown that, for all $P\in\mc V^d_{\rm d.c.}$ with columns of unit $1$-norm, there is $\ell\in\N$ such that $U^{\boxtimes\ell}\rgeq P$. Let us assume next that $P,Q\in\mc V_{\rm d.c.}^d$ are such that $P\rgeq Q$. WLOG we may assume that the columns of $P$ and $Q$ have unit 1-norm. Thus, there is $\ell\in\N$ such that
\begin{equation}
Q\boxtimes U^{\boxtimes\ell}\rgeq(1\,\cdots\,1)\boxtimes U^{\boxtimes\ell}=U^{\boxtimes\ell}\rgeq P,
\end{equation}
showing that $U$ is a power universal.

(i) $\Rightarrow$ (iii): Let $P\in\mc V^d_{\rm d.c.}$ be such that all the columns of $P$ have unit 1-norm and $\Phi(P)\neq 1$ for all the nondegenerate monotone homomorphisms of $S^d_{\rm d.c.}$ into $\mb K\in\{\R_+,\R_+^{\rm op},\T\R_+,\T\R_+^{\rm op}\}$. We may choose, e.g.,
\begin{equation}
P=\left(\begin{array}{cccc}
1/2&\cdots&1/2&1/d\\
1/2&\cdots&0&1/d\\
\vdots&\ddots&\vdots&\vdots\\
0&\cdots&1/2&1/d
\end{array}\right), 
\end{equation}
for which $\Phi(P)>1$ when $\Phi\in\mc H^{\rm n.d.}(S^d_{\rm d.c.},\R_+)\cup\mc H^{\rm d.c.}(S^d_{\rm d.c.},\T\R_+)$ and $\Phi(P)<1$ otherwise. According to (i), there is $\ell\in\N$ such that $U^{\boxtimes\ell}\rgeq P$, so that $\Phi(U)^\ell\geq\Phi(P)>1$ for $\Phi\in\mc H^{\rm n.d.}(S^d_{\rm d.c.},\R_+)\cup\mc H^{\rm d.c.}(S^d_{\rm d.c.},\T\R_+)$ and $\Phi(U)^\ell\leq\Phi(P)<1$ otherwise. All in all, $\Phi(U)\neq1$ for all the monotone homomorphisms.

\end{proof}

\begin{remark}
    Note that for the implication (iii) $\Rightarrow$ (ii) above, we do not need all non-degenerate monotone homomorphisms to be unequal to $1$. It is sufficient that this holds for the homomorphisms $\Phi_{e_d,\{k,d\}}$ for $k\in[d-1]$, and $\Phi_{e_k,\{k,k^\prime,d\}}$ for $k,k^\prime\in[d-1]$, $k\neq k^\prime$. 
\end{remark}

\begin{remark}
Note that, according to item (ii) of Proposition \ref{pro:poweruniversaldcgeneral} and item (iii) of Proposition \ref{lem:poweruniversalconditions}, $U=\big(u^{(1)},\ldots,u^{(d)}\big)$ is a power universal of $S^d_{\rm d.c.}$ if and only if $\big(u^{(1)},\ldots,u^{(d-1)}\big)$ is a power universal of $S^{d-1}$ (n.b.,\ {\it not} of $S^{d-1}_{\rm d.c.}$) and ${\rm supp}\,u^{(k)}\subsetneq{\rm supp}\,u^{(d)}$ for $k=1,\ldots,d-1$. 

For $d=2$, $U=\big(u^{(1)},u^{(2)}\big)$ is a power universal of $S^d_{\rm d.c.}$ if and only if ${\rm supp}\,u^{(1)}\subsetneq{\rm supp}\,u^{(2)}$. For $d=3$, the general form of a power universal, up to the $\approx$-equivalence, is
\begin{equation}
U=\left(\begin{array}{ccc}
*&*&*\\
*&0&*\\
0&*&*
\end{array}\right), 
\end{equation}
where the asterisks represent positive entries (or vectors with positive entries).
\end{remark}

\subsection{Results: applying the Vergleichsstellensatz}

Applying the Vergleichsstellensatz in the form of Theorem \ref{thm:Fritz2022}, we can use the results from the previous sections to find the conditions for large sample and catalytic majorization in the semiring $S^d_{\rm d.c.}$, as given in the following Theorem. Note that just like the analogous result for the semiring $S^d$, the strict inequalities in \eqref{eq:majorizationconditionsdominatingcolumn1} and \eqref{eq:majorizationconditionsdominatingcolumn2} already imply that $P$ is power universal by Proposition \ref{pro:poweruniversaldcgeneral}, hence we do not need to require this explicitly in the conditions of the Theorem. 

The conditions on the monotone homomorphisms and derivations for large-sample and catalytic majorization of a matrix $P\in \V^d$ over $Q\in \V^d$ can be expressed as 
\begin{align}
    \label{eq:majorizationconditionsdominatingcolumn1}
      &\Phi_{\underline{\alpha},C}(P)<\Phi_{\underline{\alpha},C}(Q)&\forall\; \Phi_{\underline{\alpha},C}\in\mathcal{H}^{\rm n.d.}(S^d_{\rm d.c.},\R_+^{\rm op}),\\
    \label{eq:majorizationconditionsdominatingcolumn2}
    &\Phi_{\alpha,c}(P)>\Phi_{\alpha,c}(Q)&\forall\; \Phi_{\alpha,c}\in\mathcal{H}^{\rm n.d.}(S^d_{\rm d.c.},\R_+),\\
    \label{eq:majorizationconditionsdominatingcolumn3}
    &D_1(p^{(k)}\|p^{(d)})>D_1(q^{(k)}\|q^{(d)}),\,D_\infty(p^{(k)}\|p^{(d)})>D_\infty(q^{(k)}\|q^{(d)})&\forall k=1,\ldots,d-1. 
\end{align}
Recall that $D_1$ is the Kullback-Leibler divergence and $D_\infty$ the max-divergence between two columns. The homomorphisms $\Phi_{\underline{\alpha},C}\in\mathcal{H}^{\rm n.d.}(S^d_{\rm d.c.},\R_+^{\rm op})$ are exactly as in \eqref{eq:PhiR+op}, where we again exclude all degenerate homomorphisms $\Phi^{(k)}$, $k\in[d]$ (i.e. those with either $\underline{\alpha}=e_k$ and $C=\{k\}$ for some $k\in[d]$, or $\underline{\alpha}=e_k$ and $C=\{k,d\}$ for some $k\in[d-1]$). The homomorphisms $\Phi_{\alpha,c}\in\mathcal{H}^{\rm n.d.}(S^d_{\rm d.c.},\R_+)$ were defined in \eqref{eq:homdcR+} and can be conveniently written as 
\begin{equation}
    \Phi_{\alpha,c}(P)=\sum_{i=1}^n\big(p^{(c)}\big)^\alpha\big(p^{(d)}\big)^{1-\alpha}, 
\end{equation}
where $c\in[d-1]$ and $\alpha>1$. 

\begin{theorem}\label{thm:majorizationdominatingcolumn}
		Let $P=(p^{(1)},\ldots,p^{(d)})$, $Q=(q^{(1)},\ldots,q^{(d)})$ be tuples of probability vectors such that $\bigcap_{k=1}^d{\rm supp}\,p^{(k)}\neq\emptyset$ and ${\rm supp}\,p^{(k)}\subseteq{\rm supp}\,p^{(d)}$ for $k\in[d-1]$, and similarly for $Q$. If the inequalities in \eqref{eq:majorizationconditionsdominatingcolumn1}, \eqref{eq:majorizationconditionsdominatingcolumn2} and \eqref{eq:majorizationconditionsdominatingcolumn3} hold, then we have the following: 
		\begin{enumerate}[(a)]
			\item There exist a stochastic map $T$ and a tuple of probability vectors $(r^{(1)},\ldots,r^{(d)})$, satisfying $\bigcap_{k=1}^d{\rm supp}\,r^{(k)}\neq\emptyset$ and ${\rm supp}\,r^{(k)}\subseteq{\rm supp}\,r^{(d)}$ for $k\in[d-1]$, such that
			\begin{equation}
				T(r^{(k)}\otimes p^{(k)})=r^{(k)}\otimes q^{(k)}\quad\text{for }k\in[d], 
			\end{equation}
			and we may choose, for $n$ sufficiently large, 
			\begin{equation}
				r^{(k)}=\frac{1}{n+1}\bigoplus_{\ell=0}^n\left(q^{(k)}\right)^{\otimes \ell}\otimes\left(p^{(k)}\right)^{\otimes(n-\ell)}\quad\text{ for }k\in[d]. 
			\end{equation}
			\item For $n$ sufficiently large there exists a stochastic map $T_n$ such that 
			\begin{equation}
				T_n(p^{(k)})^{\otimes n}=(q^{(k)})^{\otimes n}\quad\text{for }k\in[d]. 
			\end{equation}
		\end{enumerate}
		Conversely, if either one of these holds, then the inequalities \eqref{eq:majorizationconditionsdominatingcolumn1}, \eqref{eq:majorizationconditionsdominatingcolumn2} and \eqref{eq:majorizationconditionsdominatingcolumn3} hold non-strictly. 
	\end{theorem}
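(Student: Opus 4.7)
The plan is to invoke Theorem~\ref{thm:Fritz2022} applied to the preordered semiring $S^d_{\rm d.c.}$, with $x=Q$ and $y=P$; all structural ingredients have been assembled in the preceding subsections, so the task reduces to unpacking the hypotheses. The semiring $S^d_{\rm d.c.}$ is a zerosumfree preordered semidomain (inherited from $S^d$), the column-wise $1$-norm provides the required surjective homomorphism $\|\cdot\|:S^d_{\rm d.c.}\to\R_{>0}^d\cup\{0\}$ with trivial kernel (both of the required implications $a\rleq b\Rightarrow\|a\|=\|b\|$ and $\|a\|=\|b\|\Rightarrow a\sim b$ are proven exactly as in the minimal restrictions case), Proposition~\ref{pro:poweruniversaldcgeneral} exhibits power universals, and by assumption $\|P\|=\|Q\|=(1,\ldots,1)$. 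The strict inequalities in \eqref{eq:majorizationconditionsdominatingcolumn1}--\eqref{eq:majorizationconditionsdominatingcolumn3} imply in particular that $\Phi(P)\neq 1$ for every nondegenerate monotone homomorphism $\Phi$ of $S^d_{\rm d.c.}$ into $\R_+$, $\R_+^{\rm op}$, or $\T\R_+$, so Proposition~\ref{pro:poweruniversaldcgeneral}(iii) also guarantees that $P$ is itself power universal, as required by Theorem~\ref{thm:Fritz2022}.

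I would then translate hypothesis (i) of Theorem~\ref{thm:Fritz2022} by matching it against the classification of nondegenerate monotone homomorphisms on $S^d_{\rm d.c.}$ proven in the previous subsection: those into $\R_+^{\rm op}$ are exactly the members of $\mathcal{H}^{\rm n.d.}(S^d_{\rm d.c.},\R_+^{\rm op})$ and contribute condition \eqref{eq:majorizationconditionsdominatingcolumn1}; those into $\R_+$ are exactly $\mathcal{H}^{\rm n.d.}(S^d_{\rm d.c.},\R_+)$ and contribute \eqref{eq:majorizationconditionsdominatingcolumn2}; those into $\T\R_+$ are the one-parameter families $(\Phi^\T_c)^\beta$ with $\beta>0$, for which the strict inequality $\Phi(Q)<\Phi(P)$ simultaneously for all $\beta$ is equivalent to $\Phi^\T_c(P)>\Phi^\T_c(Q)$, i.e.,\ $D_\infty(p^{(c)}\|p^{(d)})>D_\infty(q^{(c)}\|q^{(d)})$ for every $c\in[d-1]$, giving the max-divergence half of \eqref{eq:majorizationconditionsdominatingcolumn3}. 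No nondegenerate monotone homomorphism into $\T\R_+^{\rm op}$ exists, so that case contributes nothing.

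For hypothesis (ii), I would invoke Proposition~\ref{pro:derivationsdominatingcolumn}. For the component index $j=d$, the only monotone $\Phi^{(d)}$-derivation modulo interchangeability is identically zero, so the normalization $\Delta(u)=1$ cannot be achieved and the condition is vacuously satisfied; this is the main subtlety in the argument and explains why no Kullback--Leibler condition appears on the final column. For $j\in[d-1]$, every monotone $\Phi^{(j)}$-derivation has the form $\Delta(\cdot)=\gamma D_1(\cdot^{(j)}\|\cdot^{(d)})$ with $\gamma\geq0$; power universality of $u$ forces $D_1(u^{(j)}\|u^{(d)})>0$, so the normalization $\Delta(u)=1$ fixes a unique $\gamma>0$ and the required strict inequality $\Delta(Q)<\Delta(P)$ reduces exactly to the Kullback--Leibler half of \eqref{eq:majorizationconditionsdominatingcolumn3}. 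Conclusions (a) and (b) of Theorem~\ref{thm:Fritz2022} then deliver items (a) and (b) of the present theorem, with the catalyst $a=\sum_{\ell=0}^n Q^{\boxtimes\ell}\boxtimes P^{\boxtimes(n-\ell)}$, after normalization by $1/(n+1)$, yielding the stated probability-vector catalyst $r^{(k)}$. The converse direction follows by applying monotonicity of each $\Phi_{\underline{\alpha},C}$, $\Phi_{\alpha,c}$, $\Phi^\T_c$, and of the normalized derivations $D_1(\cdot^{(j)}\|\cdot^{(d)})$ to any witness of (a) or (b), together with multiplicativity to dispose of the catalyst or the $n$-th power, yielding the non-strict inequalities directly.
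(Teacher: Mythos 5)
Your proposal is correct and follows essentially the same route as the paper, which likewise obtains this theorem by feeding the classification of nondegenerate monotone homomorphisms, the derivation classification of Proposition~\ref{pro:derivationsdominatingcolumn}, and the power-universality criterion of Proposition~\ref{pro:poweruniversaldcgeneral} into Theorem~\ref{thm:Fritz2022} with $x=Q$, $y=P$. Your unpacking of the vacuous derivation condition at $j=d$ and of the normalization $\Delta(u)=1$ for $j\in[d-1]$ matches the intended argument, and the converse via monotonicity and multiplicativity is the standard one.
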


    \begin{remark}
        We may derive an asymptotic version of this result, analogous to Theorem \ref{thm:genasymptocatalytic}. However, we would have to require the matrix $P$ to be power universal, which according to the characterization in (ii) of Proposition \ref{pro:poweruniversaldcgeneral} is quite restrictive. Nevertheless, for the case $d=2$, the situation is manageable, since there are essentially three interesting cases: either both columns have the same support and are different, or both columns have part of the support outside of the support of the other column, or one column has support strictly contained in the other. Majorization results (both for the exact and asymptotic setting) for the first case were derived in \cite{farooq2024}, and the second case is covered by the results in subsection \ref{subsec:applyingvssnorestrictions}. The third case is covered by Theorem \ref{thm:majorizationdominatingcolumn} (for the exact setting) above and by Theorem \ref{thm:asymptocatalytic} (for the asymptotic setting) in the next section, where we will derive an analogue of Theorem \ref{thm:genasymptocatalytic} for the case $d=2$. 
        
        Note that for $d>2$ there are many possibilities for the relationships of the supports of the columns, and each would require a different semiring to be studied. We will discuss this issue further in our concluding remarks. 
    \end{remark}

\subsection{The two-column case: dichotomies}\label{sec:2column}

Here we specify to the case $d=2$. In this context, any pair $\big(p^{(1)},p^{(2)}\big)$, where $p^{(1)}$ and $p^{(2)}$ are probability vectors (i.e.,\ $\big\|p^{(1)}\big\|_1=1=\big\|p^{(2)}\big\|_1$) and satisfy ${\rm supp}\,p^{(1)}\subseteq{\rm supp}\,p^{(2)}$, is called a \emph{dichotomy}. This special case will turn out to be particularly interesting in the theory of thermal majorization in the next section. We derive sufficient conditions for exact large-sample and catalytic majorization in this setting and sufficient and necessary conditions for approximate large-sample and catalytic majorization with asymptotically vanishing error in Corollary \ref{cor:dichotomyexact} and Theorem \ref{thm:asymptocatalytic}. We describe the conditions in the form of inequalities involving the traditional R\'{e}nyi divergences. We also complete the picture of asymptotic majorization in large samples and catalytically in Corollary \ref{cor:asymptocatalytic} to obtain a full characterization without having to make any support restrictions.

For all dichotomies $\big(p^{(1)},p^{(2)}\big)$, we have 
\begin{equation}
\frac{1}{\alpha-1}\log{\Phi_{(\alpha,1-\alpha),\{1,2\}}\big(p^{(1)}\|p^{(2)}\big)}=D_\alpha\big(p^{(1)}\big\|p^{(2)}\big), 
\end{equation}
when $\alpha\in[0,1)$, corresponding to the homomorphisms in $\mathcal{H}^{\rm n.d.}(S^d_{\rm d.c.},\R_+^{\rm op})$, and 
\begin{equation}
\frac{1}{\alpha-1}\log{\Phi_{\alpha,1}\big(p^{(1)}\|p^{(2)}\big)}=D_\alpha\big(p^{(1)}\big\|p^{(2)}\big), 
\end{equation}
when $\alpha\in(1,\infty)$, corresponding to the homomorphisms in $\mathcal{H}^{\rm n.d.}(S^d_{\rm d.c.},\R_+)$. 
We have the two degenerate homomorphisms $\Phi^{(1)}=\Phi_{(1,0),\{1\}}=\Phi_{(1,0),\{1,2\}}$ and $\Phi^{(2)}=\Phi_{(0,1),\{2\}}$. Up to a positive multiplier (and interchangeability), we have only one monotone derivation at $\Phi^{(1)}$, which is simply the Kullback-Leibler divergence, and none at $\Phi^{(2)}$. We also have essentially only one nondegenerate tropical homomorphism $\Phi^\T_1$ and
\begin{equation}
\log{\Phi^\T_1\big(p^{(1)},p^{(2)}\big)}=D_\infty\big(p^{(1)}\big\|p^{(2)}\big)
\end{equation}
for all dichotomies $\big(p^{(1)},p^{(2)}\big)$. Thus, we may unify the nondegenerate monotone homomorphisms and derivations of $S^2_{\rm d.c.}$ within the family $D_\alpha$, $0\leq\alpha\leq\infty$, of R\'{e}nyi divergences; see Figure \ref{fig:2d} for a depiction. The monotonicity of the homomorphisms and derivation is now encapsulated by the data processing inequality
\begin{equation}
D_\alpha\big(p^{(1)}\big\|p^{(2)}\big)\geq D_\alpha\big(Tp^{(1)}\big\|Tp^{(2)}\big),\qquad\forall\alpha\in[0,\infty],
\end{equation}
for all dichotomies $\big(p^{(1)},p^{(2)}\big)$ and stochastic matrices $T$.


We reformulate Theorem \ref{thm:majorizationdominatingcolumn} specified to the case of dichotomies and in terms of inequalities of R\'enyi divergences. 

\begin{corollary}\label{cor:dichotomyexact}
Let $\big(p^{(1)},p^{(2)}\big)$ and $\big(q^{(1)},q^{(2)}\big)$ be dichotomies. If
\begin{equation}\label{eq:RenyiOrdered}
D_\alpha\big(p^{(1)}\big\|p^{(2)}\big)>D_\alpha\big(q^{(1)}\big\|q^{(2)}\big),\qquad\forall\alpha\in[0,\infty],
\end{equation}
then we have the following:
\begin{itemize}
\item[(a)] There is a stochastic matrix $T$ and a dichotomy $\big(r^{(1)},r^{(2)}\big)$ such that
\begin{equation}
T\big(r^{(k)}\otimes p^{(k)}\big)=r^{(k)}\otimes q^{(k)},\qquad k=1,2.
\end{equation}
There is $n\in\N$ such that we may choose
\begin{equation}\label{eq:BinomialCatalyst}
r^{(k)}=\frac{1}{n+1}\bigoplus_{\ell=0}^n \big(q^{(k)}\big)^{\otimes\ell}\otimes\big(p^{(k)}\big)^{\otimes(n-\ell)},\quad k=1,2.
\end{equation}
\item[(b)] For any $n\in\N$ sufficiently large, there is a stochastic matrix $T_n$ such that
\begin{equation}
T_n\big(p^{(k)}\big)^{\otimes n}=\big(q^{(k)}\big)^{\otimes n},\qquad k=1,2.
\end{equation}
\end{itemize}
Conversely, if item (a) or (b) above holds, then \eqref{eq:RenyiOrdered} holds with non-strict inequalities.
\end{corollary}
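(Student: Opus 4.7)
The strategy is to specialise Theorem~\ref{thm:majorizationdominatingcolumn} to the case $d=2$ and to translate conditions \eqref{eq:majorizationconditionsdominatingcolumn1}--\eqref{eq:majorizationconditionsdominatingcolumn3} into the single chain of inequalities \eqref{eq:RenyiOrdered}. All the dictionary entries needed for this translation are already recorded in the paragraph preceding the statement: the non-degenerate homomorphisms in $\mathcal{H}^{\rm n.d.}(S^2_{\rm d.c.},\R_+^{\rm op})$ are indexed by $\alpha\in[0,1)$ via $D_\alpha\big(p^{(1)}\big\|p^{(2)}\big)=\frac{1}{\alpha-1}\log\Phi_{(\alpha,1-\alpha),\{1,2\}}(P)$, the non-degenerate homomorphisms in $\mathcal{H}^{\rm n.d.}(S^2_{\rm d.c.},\R_+)$ are indexed by $\alpha\in(1,\infty)$ via the analogous formula with $\Phi_{\alpha,1}$, the unique (up to positive rescaling and interchangeability) monotone derivation at $\Phi^{(1)}$ is the Kullback--Leibler divergence $D_1$, and the essentially unique nondegenerate tropical homomorphism $\Phi^{\T}_{1}$ corresponds to $D_\infty$. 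The only degenerate homomorphism at which a nonzero derivation exists is $\Phi^{(1)}$, so no further contribution appears.

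The key observation is then a bookkeeping one about signs. For $\alpha\in[0,1)$ the factor $\frac{1}{\alpha-1}$ is negative, so the required inequality $\Phi_{(\alpha,1-\alpha),\{1,2\}}(P)<\Phi_{(\alpha,1-\alpha),\{1,2\}}(Q)$ in the $\R_+^{\rm op}$-direction is equivalent to $D_\alpha\big(p^{(1)}\big\|p^{(2)}\big)>D_\alpha\big(q^{(1)}\big\|q^{(2)}\big)$. For $\alpha\in(1,\infty)$ the factor $\frac{1}{\alpha-1}$ is positive and the $\R_+$-inequality $\Phi_{\alpha,1}(P)>\Phi_{\alpha,1}(Q)$ again matches the Rényi inequality in the same direction. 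The endpoint $\alpha=1$ comes from the derivation condition in \eqref{eq:majorizationconditionsdominatingcolumn3}, and the endpoint $\alpha=\infty$ from the tropical inequality $\Phi^{\T}_{1}(P)>\Phi^{\T}_{1}(Q)$, together covering all remaining cases of \eqref{eq:majorizationconditionsdominatingcolumn3}. Hence \eqref{eq:RenyiOrdered} is exactly the conjunction of \eqref{eq:majorizationconditionsdominatingcolumn1}--\eqref{eq:majorizationconditionsdominatingcolumn3} when $d=2$.

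With this equivalence in hand, the forward direction follows immediately: assuming \eqref{eq:RenyiOrdered}, items (a) and (b) of Theorem~\ref{thm:majorizationdominatingcolumn} deliver the catalyst $\big(r^{(1)},r^{(2)}\big)$ and the large-sample map $T_n$, with the explicit form \eqref{eq:BinomialCatalyst} being just the $d=2$ instance of the general expression given there. The converse is equally direct: if either (a) or (b) holds then the converse clause of Theorem~\ref{thm:majorizationdominatingcolumn} gives non-strict versions of \eqref{eq:majorizationconditionsdominatingcolumn1}--\eqref{eq:majorizationconditionsdominatingcolumn3}, which by the same dictionary are non-strict Rényi inequalities for all $\alpha\in[0,\infty]$. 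There is no genuine obstacle beyond keeping track of the sign flip at $\alpha-1<0$; the corollary is essentially a notational rewriting of the $d=2$ case of Theorem~\ref{thm:majorizationdominatingcolumn} in the language of R\'enyi divergences.
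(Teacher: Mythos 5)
Your proposal is correct and follows exactly the route the paper takes: the corollary is obtained by specialising Theorem~\ref{thm:majorizationdominatingcolumn} to $d=2$ and translating conditions \eqref{eq:majorizationconditionsdominatingcolumn1}--\eqref{eq:majorizationconditionsdominatingcolumn3} into \eqref{eq:RenyiOrdered} via the dictionary (homomorphisms for $\alpha\in[0,1)\cup(1,\infty)$, the $D_1$-derivation at $\Phi^{(1)}$, and the tropical homomorphism for $\alpha=\infty$) set out in the paragraph preceding the statement, with the sign flip of $\tfrac{1}{\alpha-1}$ accounting for the reversed order in $\R_+^{\rm op}$.
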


\begin{figure}
\begin{center}
\begin{overpic}[scale=0.65,unit=1mm]{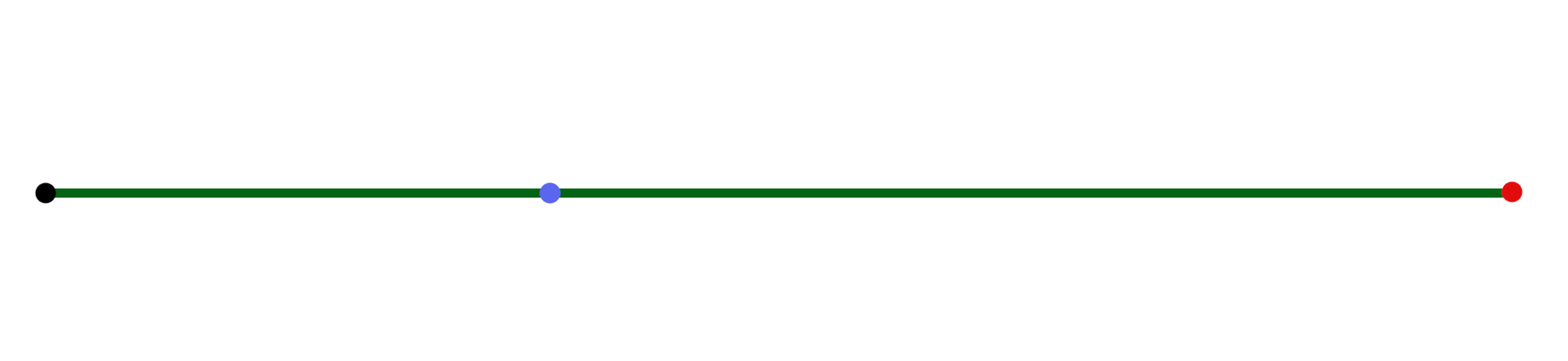}
\put(0,9){\begin{large}
$\Phi^{(2)}$
\end{large}}
\put(49,9){\begin{large}
$\Phi^{(1)}$
\end{large}}
\put(0,20){\begin{Large}
\darker{$D_0$}
\end{Large}}
\put(0,28){
\darker{$\Phi_{e_2,\{1,2\}}$}
}
\put(18,9){\begin{large}
\darker{$0<\alpha<1$}
\end{large}}
\put(18,20){\begin{Large}
\darker{$D_\alpha$}
\end{Large}}
\put(18,28){
\darker{$\Phi_{(\alpha,1-\alpha),\{1,2\}}$}
}
\put(90,9){\begin{large}
\darker{$1<\alpha<\infty$}
\end{large}}
\put(90,20){\begin{Large}
\darker{$D_\alpha$}
\end{Large}}
\put(90,28){
\darker{$\Phi_{(\alpha,1-\alpha),\{1,2\}}$}
}
\put(49,20){\begin{Large}
\sininen{$D_1$}
\end{Large}}
\put(135,9){\begin{large}
\punainen{$\alpha=\infty$}
\end{large}}
\put(140,20){\begin{Large}
\punainen{$D_\infty$}
\end{Large}}
\put(140,28){
\punainen{$\Phi^\T_1$}
}
\end{overpic}
\caption{\label{fig:2d} The monotone homomorphisms and derivations depicted for $S^2_{\rm d.c.}$: The line is the compactification of the half line $[0,\infty]$. Nondegenerate temperate homomorphisms are in green in the area $\alpha\in[0,1)\cup(1,\infty)$. These correspond to the R\'{e}nyi divergences $D_\alpha$. At points $\alpha=0$ and $\alpha=1$ we also have the degenerate homomorphisms $\Phi^{(2)}$ and $\Phi^{(1)}$, the latter of which is associated with the non-zero derivation $D_1$, the Kullback-Leibler divergence. At the infinity (in red) we have the tropical homomorphism corresponding to the max-divergence $D_\infty$.}
\end{center}
\end{figure}

We may also derive an asymptotic version of the above result using largely the same methods as in the proof of Theorem \ref{thm:genasymptocatalytic}. However, since the second probability vector dominates the first one in a dichotomy, we may remove the vanishing error completely on the second probability vector as stated at the end of the following result.

\begin{theorem}\label{thm:asymptocatalytic}
Consider dichotomies $\big(p^{(1)},p^{(2)}\big)$ and $\big(q^{(1)},q^{(2)}\big)$ 
and assume ${\rm supp}\,p^{(1)}\subsetneq{\rm supp}\,p^{(2)}$. The following conditions are equivalent:
\begin{itemize}
\item[(i)] For all $\alpha\in(0,\infty]$,
\begin{equation}\label{eq:0alphainfty}
D_\alpha\big(p^{(1)}\big\|p^{(2)}\big)\geq D_\alpha\big(q^{(1)}\big\|q^{(2)}\big).
\end{equation}
\item[(ii)] For all $\varepsilon>0$, there is a stochastic matrix $T_\varepsilon$ and dichotomies $\big(q_\varepsilon^{(1)},q_\varepsilon^{(2)}\big)$ and $\big(r_\varepsilon^{(1)},r_\varepsilon^{(2)}\big)$ such that
\begin{equation}
\big\|q^{(k)}-q^{(k)}_\varepsilon\big\|_1\leq\varepsilon,\qquad k=1,2,
\end{equation}
and
\begin{equation}
T_\varepsilon\big(r_\varepsilon^{(k)}\otimes p^{(k)}\big)=r_\varepsilon^{(k)}\otimes q_\varepsilon^{(k)},\qquad k=1,2.
\end{equation}
Again, we may choose $r_\varepsilon^{(k)}$ as in \eqref{eq:BinomialCatalyst} (with $q_\varepsilon^{(k)}$ substituted for $q^{(k)}$) for sufficiently large $n\in\N$.
\item[(iii)] For all $\varepsilon>0$, there is a dichotomy $\big(q_\varepsilon^{(1)},q_\varepsilon^{(2)}\big)$ 
such that
\begin{equation}
\big\|q^{(k)}-q_\varepsilon^{(k)}\big\|_1\leq\varepsilon,\qquad k=1,2,
\end{equation}
and for $n\in\N$ sufficiently large there is a stochastic matrix $T_{\varepsilon,n}$ such that
\begin{equation}
T_{\varepsilon,n}\big(p^{(k)}\big)^{\otimes n}=\big(q_\varepsilon^{(k)}\big)^{\otimes n},\qquad k=1,2.
\end{equation}
\end{itemize}
In statements (ii) and (iii) above, we may assume that $q_\varepsilon^{(2)}=q^{(2)}$ for all $\varepsilon>0$.
\end{theorem}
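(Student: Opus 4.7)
The plan is to follow the template of the proof of Theorem \ref{thm:genasymptocatalytic}, but to exploit the strict support inclusion ${\rm supp}\,p^{(1)}\subsetneq{\rm supp}\,p^{(2)}$ in order to avoid any perturbation of the dominating vector. For the directions (ii)$\Rightarrow$(i) and (iii)$\Rightarrow$(i), I would invoke the data-processing inequality and additivity under tensor products for the classical R\'enyi divergences: the identity $T_\varepsilon(r_\varepsilon^{(k)}\otimes p^{(k)})=r_\varepsilon^{(k)}\otimes q_\varepsilon^{(k)}$ (respectively its large-samples counterpart) yields, after cancelling the common term $D_\alpha(r_\varepsilon^{(1)}\|r_\varepsilon^{(2)})$, the bound $D_\alpha(p^{(1)}\|p^{(2)})\geq D_\alpha(q_\varepsilon^{(1)}\|q_\varepsilon^{(2)})$ for every $\alpha\in(0,\infty]$ and every $\varepsilon>0$. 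Sending $\varepsilon\downarrow 0$ and using the lower semi-continuity of $D_\alpha$ in both arguments (which holds throughout $\alpha\in(0,\infty]$) then delivers (i).

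For the implication (i)$\Rightarrow$(ii),(iii), I would set
\[
q_\varepsilon^{(1)}:=\bigl(1-\tfrac{\varepsilon}{2}\bigr)q^{(1)}+\tfrac{\varepsilon}{2}\,q^{(2)},\qquad q_\varepsilon^{(2)}:=q^{(2)}.
\]
This satisfies $\|q^{(k)}-q_\varepsilon^{(k)}\|_1\leq\varepsilon$ and makes $(q_\varepsilon^{(1)},q_\varepsilon^{(2)})$ a dichotomy with matched supports ${\rm supp}\,q_\varepsilon^{(1)}={\rm supp}\,q^{(2)}$. The task is to verify the strict inequalities $D_\alpha(p^{(1)}\|p^{(2)})>D_\alpha(q_\varepsilon^{(1)}\|q_\varepsilon^{(2)})$ for every $\alpha\in[0,\infty]$, after which Corollary \ref{cor:dichotomyexact} produces the catalyst of the prescribed binomial form as well as the stochastic maps realising both (ii) and (iii). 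At $\alpha=0$ the right-hand side vanishes by the matched supports, while the strict inclusion ${\rm supp}\,p^{(1)}\subsetneq{\rm supp}\,p^{(2)}$ renders the left-hand side strictly positive.

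For $\alpha\in(0,\infty]$ I would exhibit $(q_\varepsilon^{(1)},q^{(2)})$ as the stochastic image of the ``lifted'' dichotomy on the doubled index set,
\[
\tilde{q}^{(1)}:=(1-\tfrac{\varepsilon}{2})q^{(1)}\oplus(\tfrac{\varepsilon}{2})q^{(2)},\qquad \tilde{q}^{(2)}:=(1-\tfrac{\varepsilon}{2})q^{(2)}\oplus(\tfrac{\varepsilon}{2})q^{(2)},
\]
via the map that adds the two blocks. Data processing combined with the direct-sum formula for $D_\alpha$ then gives, for $\alpha\in(0,1)\cup(1,\infty)$, the explicit bound
\[
D_\alpha(q_\varepsilon^{(1)}\|q^{(2)})\leq\frac{1}{\alpha-1}\log\!\Bigl[(1-\tfrac{\varepsilon}{2})\,e^{(\alpha-1)D_\alpha(q^{(1)}\|q^{(2)})}+\tfrac{\varepsilon}{2}\Bigr],
\]
with analogous expressions at $\alpha\in\{1,\infty\}$. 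A short estimate shows the right-hand side is strictly below $D_\alpha(q^{(1)}\|q^{(2)})$ whenever the latter is positive, so the strict inequality against $D_\alpha(p^{(1)}\|p^{(2)})$ follows from hypothesis (i). In the degenerate subcase $D_\alpha(q^{(1)}\|q^{(2)})=0$ one has $q^{(1)}=q^{(2)}$, hence $q_\varepsilon^{(1)}=q^{(2)}$, and the required strict inequality reduces to $D_\alpha(p^{(1)}\|p^{(2)})>0$, which holds for every $\alpha\in(0,\infty]$ since $p^{(1)}\neq p^{(2)}$.

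The main obstacle I anticipate is uniformly handling the boundary cases $\alpha\in\{1,\infty\}$ together with the degeneracies in which $q^{(1)}=q^{(2)}$ or $D_\alpha(q^{(1)}\|q^{(2)})$ saturates at $0$; the data-processing reformulation above bypasses any need for delicate direct convexity arguments for the R\'enyi divergences in the range $\alpha>1$.
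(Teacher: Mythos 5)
Your proposal follows essentially the same route as the paper's proof: the same perturbation $q_\varepsilon^{(1)}=(1-\tfrac{\varepsilon}{2})q^{(1)}+\tfrac{\varepsilon}{2}q^{(2)}$, $q_\varepsilon^{(2)}=q^{(2)}$ (the paper's choice $w=q^{(2)}$), verification of strict inequalities for all $\alpha\in[0,\infty]$ so that Corollary \ref{cor:dichotomyexact} applies, and data processing plus additivity plus a limit $\varepsilon\downarrow0$ for the converse. Your ``lifting'' of $(q_\varepsilon^{(1)},q^{(2)})$ to the block direct sum is exactly the paper's step $\Phi(Q_\varepsilon)\geq\Phi\big((1-\tfrac{\varepsilon}{2})Q\boxplus\tfrac{\varepsilon}{2}W\big)$, so for $\alpha\in[0,1)$, $\alpha=1$ and $\alpha\in(1,\infty)$ the two arguments coincide, and your degenerate-case analysis ($D_\alpha(q^{(1)}\|q^{(2)})=0\Rightarrow q^{(1)}=q^{(2)}$) is correct.

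There is one genuine, though easily repaired, gap at $\alpha=\infty$. The direct-sum formula for the max-divergence gives
\[
D_\infty\big(\tilde{q}^{(1)}\big\|\tilde{q}^{(2)}\big)=\log\max\bigl\{e^{D_\infty(q^{(1)}\|q^{(2)})},\,1\bigr\}=D_\infty\big(q^{(1)}\big\|q^{(2)}\big),
\]
so the lifted dichotomy exhibits no strict decrease there, and data processing only yields $D_\infty(q_\varepsilon^{(1)}\|q^{(2)})\leq D_\infty(q^{(1)}\|q^{(2)})$; if $D_\infty(p^{(1)}\|p^{(2)})=D_\infty(q^{(1)}\|q^{(2)})>0$, your route does not deliver the strict inequality required by Corollary \ref{cor:dichotomyexact}. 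The fix is immediate precisely because you kept $q_\varepsilon^{(2)}=q^{(2)}$: a direct computation gives
\[
e^{D_\infty(q_\varepsilon^{(1)}\|q^{(2)})}=\max_i\Bigl[\bigl(1-\tfrac{\varepsilon}{2}\bigr)\tfrac{q^{(1)}_i}{q^{(2)}_i}+\tfrac{\varepsilon}{2}\Bigr]=\bigl(1-\tfrac{\varepsilon}{2}\bigr)e^{D_\infty(q^{(1)}\|q^{(2)})}+\tfrac{\varepsilon}{2},
\]
which is strictly smaller than $e^{D_\infty(q^{(1)}\|q^{(2)})}$ whenever $D_\infty(q^{(1)}\|q^{(2)})>0$, while $D_\infty(q^{(1)}\|q^{(2)})=0$ forces $q^{(1)}=q^{(2)}$ and falls into your degenerate subcase. (The paper's more elaborate real-analyticity argument at $\alpha=\infty$ is needed only because it allows a general dominating $w$ in place of $q^{(2)}$.) Your converse direction is sound: the catalyst term is finite for a dichotomy and cancels, and lower semicontinuity of $D_\alpha$ along $q_\varepsilon\to q$ in $1$-norm gives (i); the paper reaches the same conclusion slightly more explicitly by splitting off the rows of $Q_\varepsilon$ beyond those of $Q$.
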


\begin{proof}
Let us assume condition (i). Let us choose $w\in\mc P_n$ which dominates $q^{(2)}$ (and, hence, automatically $q^{(1)}$). Naturally, we may choose $w=q^{(2)}$; this choice yields the final claim of the statement of the theorem, as one clearly subsequently sees. Let us denote, for brevity, $\big(p^{(1)},p^{(2)}\big)=:P$, $\big(q^{(1)},q^{(2)}\big)=:Q$, and $W:=(w,w)$ and define, for all $\varepsilon\in(0,2]$,
\begin{equation}
q^{(k)}_\varepsilon:=\left(1-\frac{\varepsilon}{2}\right)q^{(1)}+\frac{\varepsilon}{2}w,\qquad k=1,2,
\end{equation}
and
\begin{equation}
Q_\varepsilon:=\big(q^{(1)}_\varepsilon,q^{(2)}_\varepsilon\big)=\left(1-\frac{\varepsilon}{2}\right)Q+\frac{\varepsilon}{2}W.
\end{equation}
Clearly, $Q_\varepsilon$ is a dichotomy with mutually supporting columns for all $\varepsilon\in(0,2]$. Moreover, $\big\|q^{(k)}-q^{(k)}_\varepsilon\big\|_1\leq\varepsilon$, as one easily verifies. We may show that $D_\alpha\big(p^{(1)}\big\|p^{(2)}\big)>D_\alpha\big(q^{(1)}_\varepsilon\big\|q^{(2)}_\varepsilon\big)$ for all $\alpha\in[0,1)$ in the same way as in the proof of Theorem \ref{thm:genasymptocatalytic}.

Using the joint convexity of the Kullback-Leibler divergence, we have, for all $\varepsilon\in(0,2]$,
\begin{equation}
D_1\big(q^{(1)}_\varepsilon\big\|q^{(2)}_\varepsilon\big)\leq \left(1-\frac{\varepsilon}{2}\right)D_1\big(q^{(1)}\big\|q^{(2)}\big)+\frac{\varepsilon}{2}\underbrace{D_1(w\|w)}_{=0}\leq D_1\big(q^{(1)}\big\|q^{(2)}\big)\leq D_1\big(p^{(1)}\big\|p^{(2)}\big).
\end{equation}
If $D_1\big(q^{(1)}\big\|q^{(2)}\big)=0$, then $D_1\big(q^{(1)}\big\|q^{(2)}\big)=0<D_1\big(p^{(1)}\big\|p^{(2)}\big)$ since $p^{(1)}\neq p^{(2)}$. If, on the other hand, $D_1\big(q^{(1)}\big\|q^{(2)}\big)>0$, then the penultimate inequality in the above calculation is strict. Thus, we have
\begin{equation}
D_1\big(q^{(1)}_\varepsilon\big\|q^{(2)}_\varepsilon\big)<D_1\big(p^{(1)}\big\|p^{(2)}\big),\qquad\forall\varepsilon\in(0,2].
\end{equation}

Let now $\alpha\in(1,\infty)$ and denote $\Phi_\alpha:=\Phi_{(\alpha,1-\alpha),\{1,2\}}$. Similarly as in the proof of Theorem \ref{thm:genasymptocatalytic}, we may evaluate, for all $\varepsilon\in(0,2]$,
\begin{align}
D_\alpha\big(q^{(1)}_\varepsilon\big\|q^{(2)}_\varepsilon\big)&\leq\frac{1}{\alpha-1}\log{\bigg(Q_\alpha(Q)-\frac{\varepsilon}{2}\underbrace{\big(\Phi_\alpha(Q)-1\big)}_{\geq0}\bigg)}\\
&\leq\frac{1}{\alpha-1}\log{\Phi_\alpha(Q)}=D_\alpha\big(q^{(1)}\big\|q^{(2)}\big)\leq D_\alpha\big(p^{(1)}\big\|p^{(2)}\big).
\end{align}
If $\Phi_\alpha(Q)=1$, then $\Phi_\alpha(Q)=1<\Phi_\alpha(P)$ and the final inequality above is strict. If $\Phi_\alpha(Q)>1$, then the penultimate inequality above is strict. Thus,
\begin{equation}
D_\alpha\big(q^{(1)}_\varepsilon\big\|q^{(2)}_\varepsilon\big)<D_\alpha\big(p^{(1)}\big\|p^{(2)}\big),\qquad\forall\varepsilon\in(0,2].
\end{equation}

Let us then look at the tropical homomorphism $\Phi^\T_1$. Using monotonicity, we have, for all $\varepsilon\in(0,2]$,
\begin{align}
\Phi^\T_1(Q_\varepsilon)&=\Phi^\T_1\left(\left(1-\frac{\varepsilon}{2}\right)Q+\frac{\varepsilon}{2}W\right)\leq\Phi^\T_1\left(\left(1-\frac{\varepsilon}{2}\right)Q\boxplus\frac{\varepsilon}{2}W\right)\\
&=\max\big\{\Phi^\T_1(Q),1\big\}=\Phi^\T_1(Q)\leq\Phi^\T_1(P).
\end{align}
Especially, $\Phi^\T_1(Q_\varepsilon)\leq\Phi^\T_1(Q)$, i.e.,
\begin{equation}
\max_{1\leq i\leq n}\frac{\left(1-\frac{\varepsilon}{2}\right)q^{(1)}_i+\frac{\varepsilon}{2}w_i}{\left(1-\frac{\varepsilon}{2}\right)q^{(2)}_i+\frac{\varepsilon}{2}w_i}\leq\max_{1\leq i\leq n}\frac{q^{(1)}_i}{q^{(2)}_i}
\end{equation}
for all $\varepsilon\in(0,2]$. When $0<\varepsilon\leq\varepsilon'\leq 2$, then $Q_\varepsilon\rgeq Q_{\varepsilon'}$, implying that the LHS above is non-increasing in $\varepsilon$ and coincides with the RHS when $\varepsilon\to0$. Thus, if we have equality with some $\varepsilon_0\in(0,2]$, then we have equality for all $\varepsilon\in[0,\varepsilon_0]$. Let us assume this. Since the LHS is a pointwise maximum of a finite set of real-analytic functions, the maximum is reached with the same index $i$ for all $\varepsilon\in[0,\varepsilon_0]$. The real-analyticity of the LHS now implies that the LHS is constant for all $\varepsilon\in[0,2]$, i.e.,\ $\Phi^\T_1(Q)=1=\Phi^\T_1(W)$. Thus, if $\Phi^\T_1(Q_\varepsilon)=\Phi^\T_1(Q)$ for some $\varepsilon\in(0,2]$, this is the case for all $\varepsilon\in[0,2]$ and $\Phi^\T_1(Q)=1<\Phi^\T_1(P)$. On the other hand, if $\Phi^\T_1(Q)>1$ then $\Phi^\T(Q_\varepsilon)<\Phi^\T(Q)$, so that $\Phi^\T_1(Q_\varepsilon)<\Phi^\T_1(Q)\leq\Phi^\T_1(P)$. All in all,
\begin{equation}
D_\infty\big(q^{(1)}_\varepsilon\big\|q^{(2)}_\varepsilon\big)<D_\infty\big(p^{(1)}\big\|p^{(2)}\big),\qquad\forall\varepsilon\in(0,2].
\end{equation}
Thus, the conditions of Corollary \ref{cor:dichotomyexact} are satisfied for $Q_\varepsilon$ and $P$, and conditions (ii) and (iii) of the claim follow.

Let us then assume that (ii) or (iii) holds. The proof that $D_\alpha\big(p^{(1)}\big\|p^{(2)}\big)\geq D_\alpha\big(q^{(1)}\big\|q^{(2)}\big)$ for $\alpha\in(0,1)$ follows from the same argument as in the end of the proof of Theorem \ref{thm:genasymptocatalytic}. To show the inequality for $\alpha\in(1,\infty)$, we use almost the same argument: consider any $D_\alpha$ with $\alpha\in(1,\infty)$, which corresponds to the monotone homomorphism $\Phi_{\alpha,1}$. Then we use similar notation and reasoning as in the proof of Theorem \ref{thm:genasymptocatalytic}, except that $\Phi=\Phi_{\alpha,1}$ and the direction of the inequalities is reversed, and we note that 
\begin{equation}
    \Phi_{\alpha,1}(Q_\varepsilon^1\boxplus(1\,\cdots\,1))\geq(1+\varepsilon)^{1-\alpha}, 
\end{equation}
where we used monotonicity of $\Phi_{\alpha,1}$ and that the sum of each column in $Q_\varepsilon^1\boxplus(1\,\cdots\,1)$ is between $1$ and $1+\varepsilon$. Finally, the inequalities $D_\alpha\big(p^{(1)}\big\|p^{(2)}\big)\geq D_\alpha\big(q^{(1)}\big\|q^{(2)}\big)$ for $\alpha\in\{1,\infty\}$ follow from the same inequalities for $\alpha\in(0,1)\cup(1,\infty)$ and the continuity of $D_\alpha$ in $\alpha$. 
\end{proof}

Combining the above result with Theorem 22 in \cite{farooq2024} gives immediately the following corollary.

\begin{corollary}\label{cor:asymptocatalytic}
Denote by $\mc P_n$ the set of probability vectors with $n$ components and consider probability vectors $p^{(2)},q^{(2)}\in\mc P_n$ of full support and some additional $p^{(1)},q^{(1)}\in\mc P_n$. Also assume that $p^{(1)}\neq p^{(2)}$. The following conditions are equivalent:
\begin{itemize}
\item[(i)] For all $\alpha\geq 1/2$,
\begin{equation}\label{eq:jointcond}
D_\alpha\big(p^{(1)}\big\|p^{(2)}\big)\geq D_\alpha\big(q^{(1)}\big\|q^{(2)}\big),\qquad D_\alpha\big(p^{(2)}\big\|p^{(1)}\big)\geq D_\alpha\big(q^{(2)}\big\|q^{(1)}\big).
\end{equation}
\item[(ii)] For all $\varepsilon>0$, there is a stochastic matrix $T_\varepsilon$ and dichotomies $\big(q_\varepsilon^{(1)},q_\varepsilon^{(2)}\big)$ and $\big(r_\varepsilon^{(1)},r_\varepsilon^{(2)}\big)$ such that
\begin{equation}
\big\|q^{(k)}-q^{(k)}_\varepsilon\big\|_1\leq\varepsilon,\qquad k=1,2,
\end{equation}
and
\begin{equation}
T_\varepsilon\big(r_\varepsilon^{(k)}\otimes p^{(k)}\big)=r_\varepsilon^{(k)}\otimes q_\varepsilon^{(k)},\qquad k=1,2.
\end{equation}
Again, we may choose $r_\varepsilon^{(k)}$ as in \eqref{eq:BinomialCatalyst} (with $q_\varepsilon^{(k)}$ substituted for $q^{(k)}$) for sufficiently large $n\in\N$.
\item[(iii)] For all $\varepsilon>0$, there is a dichotomy $\big(q_\varepsilon^{(1)},q_\varepsilon^{(2)}\big)$ with probability vectors in $\mc P_n$
such that
\begin{equation}
\big\|q^{(k)}-q_\varepsilon^{(k)}\big\|_1\leq\varepsilon,\qquad k=1,2,
\end{equation}
and for $n\in\N$ sufficiently large there is a stochastic matrix $T_{\varepsilon,n}$ such that
\begin{equation}
T_{\varepsilon,n}\big(p^{(k)}\big)^{\otimes n}=\big(q_\varepsilon^{(k)}\big)^{\otimes n},\qquad k=1,2.
\end{equation}
\end{itemize}
In statements (i) and (ii), we may assume that $q_\varepsilon^{(2)}=q^{(2)}$ for all $\varepsilon>0$.
\end{corollary}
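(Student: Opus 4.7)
The plan is to derive the corollary by combining Theorem~\ref{thm:asymptocatalytic} with Theorem~22 of \cite{farooq2024}, via a case split on whether ${\rm supp}\,p^{(1)}$ equals $[n]$ or is strictly contained in it. Since $p^{(2)}$ and $q^{(2)}$ have full support by assumption, both $(p^{(1)},p^{(2)})$ and $(q^{(1)},q^{(2)})$ are automatically dichotomies, so only the support of $p^{(1)}$ (and, as I will argue, consequently $q^{(1)}$) governs the case split.

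In the case ${\rm supp}\,p^{(1)}=[n]$, I would first observe that condition (i) forces $q^{(1)}$ to also have full support: if $q^{(1)}_i=0$ for some $i$, then $D_1\big(q^{(2)}\big\|q^{(1)}\big)=\infty$, while $D_1\big(p^{(2)}\big\|p^{(1)}\big)$ is finite since $p^{(1)}$ has full support, contradicting the reverse inequality in (i) at $\alpha=1$. Both dichotomies then have equal full support, and the corollary reduces directly to Theorem~22 of \cite{farooq2024}, whose equal-support hypothesis is exactly (i) for $\alpha\in[1/2,\infty]$ in both directions and whose conclusions match (ii) and (iii), including the assertion $q_\varepsilon^{(2)}=q^{(2)}$.

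In the case ${\rm supp}\,p^{(1)}\subsetneq[n]$, I would invoke Theorem~\ref{thm:asymptocatalytic}, whose hypothesis requires $D_\alpha\big(p^{(1)}\big\|p^{(2)}\big)\geq D_\alpha\big(q^{(1)}\big\|q^{(2)}\big)$ for all $\alpha\in(0,\infty]$. Condition (i) already covers $\alpha\in[1/2,\infty]$; for $\alpha\in(0,1/2)$, I would invoke the R\'enyi symmetry
\begin{equation*}
(1-\alpha)D_\alpha(p\|q)=\alpha D_{1-\alpha}(q\|p),\qquad\alpha\in(0,1),
\end{equation*}
which is valid since all supports intersect (as $p^{(2)},q^{(2)}$ have full support). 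This identity transfers the missing first-direction inequality at $\alpha\in(0,1/2)$ to the reverse-direction inequality of (i) at $1-\alpha\in(1/2,1]$. The conclusions (ii) and (iii) together with the assertion $q_\varepsilon^{(2)}=q^{(2)}$ then follow from Theorem~\ref{thm:asymptocatalytic}.

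For the converse direction, I would use the data-processing inequality for $D_\alpha$ combined with its additivity over dichotomy tensor products,
\begin{equation*}
D_\alpha\big(r^{(1)}\otimes p^{(1)}\big\|r^{(2)}\otimes p^{(2)}\big)=D_\alpha\big(r^{(1)}\big\|r^{(2)}\big)+D_\alpha\big(p^{(1)}\big\|p^{(2)}\big),
\end{equation*}
to reduce (ii) or (iii) to the inequality $D_\alpha\big(p^{(1)}\big\|p^{(2)}\big)\geq D_\alpha\big(q_\varepsilon^{(1)}\big\|q_\varepsilon^{(2)}\big)$ for every $\varepsilon>0$ after cancelling the (finite) catalyst term, then pass to the limit $\varepsilon\to 0$ using lower semicontinuity of $D_\alpha$. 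Repeating the argument with the two columns swapped yields both directions in (i) for $\alpha\in[1/2,\infty]$. The hardest step to get right will be this limit at the endpoint $\alpha=\infty$, where $D_\infty$ can jump when the support of $q_\varepsilon^{(1)}$ contracts onto that of $q^{(1)}$; lower semicontinuity is exactly what keeps the inequality valid in the limit. A secondary subtlety is the Case~1 observation that $q^{(1)}$ must inherit full support from $p^{(1)}$ through the reverse direction of (i), without which I could not cleanly invoke the equal-support theorem from \cite{farooq2024}.
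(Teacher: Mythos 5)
Your proposal is correct and follows essentially the same route as the paper's proof: a case split on whether ${\rm supp}\,p^{(1)}$ is full, reduction of the equal-support case to Theorem~22 of \cite{farooq2024}, exclusion of the mixed case (${\rm supp}\,p^{(1)}$ full but ${\rm supp}\,q^{(1)}$ not) via condition (i) near $\alpha=1$, and reduction of the strict-containment case to Theorem~\ref{thm:asymptocatalytic} using the R\'enyi symmetry $(1-\alpha)D_\alpha(p\|q)=\alpha D_{1-\alpha}(q\|p)$ to supply the missing range $\alpha\in(0,1/2)$. Your converse via data processing, additivity over tensor products, and lower semicontinuity is a mild repackaging of the converse clauses of the two cited theorems and is equally valid.
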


\begin{proof}
Let us first assume that ${\rm supp}\,p^{(1)}={\rm supp}\,p^{(2)}=\{1,\ldots,n\}$ and ${\rm supp}\,q^{(1)}={\rm supp}\,q^{(2)}=\{1,\ldots,n\}$. In this case, the desired result is given by Theorem 22 in \cite{farooq2024}. Strictly speaking, only the equivalence of statements (i) and (ii) is shown there, but adding (iii) can be done in the same way that we have already employed earlier.

Let us now assume that ${\rm supp}\,p^{(1)}\subsetneq{\rm supp}\,p^{(2)}$ or ${\rm supp}\,q^{(1)}\subsetneq{\rm supp}\,q^{(2)}$. Let us first show that the latter case is out of the question if the first one does not hold. So, assume that ${\rm supp}\,p^{(1)}={\rm supp}\,p^{(2)}$ while ${\rm supp}\,q^{(1)}\subsetneq{\rm supp}\,q{(2)}$. If we assume (i), we obtain from the second set of inequalities that
\begin{equation}
1=\sum_{i\in{\rm supp}\,p^{(1)}}p^{(2)}_i\leq\sum_{i\in{\rm supp}\,q^{(1)}}q^{(2)}_i<1
\end{equation}
as $\alpha\to 1$, a contradiction. Thus, we only have to consider the case where ${\rm supp}\,p^{(1)}\subsetneq{\rm supp}\,p^{(2)}$. In this case, the second set of the inequalities in \eqref{eq:jointcond} are irrelevant when $\alpha\geq1$. Indeed, $D_\alpha\big(p^{(2)}\big\|p^{(1)}\big)=\infty$ when $\alpha\geq1$. Thus we may ignore this set of inequalities. For $0<\alpha\leq1/2$, we have $D_\alpha\big(p^{(1)}\big\|p^{(2)}\big)=\frac{1-\alpha}{\alpha}D_{1-\alpha}\big(p^{(2)}\big\|p^{(1)}\big)$. This means that, if (i) holds, then $D_\alpha\big(p^{(1)}\big\|p^{(2)}\big)\geq D_\alpha\big(q^{(1)}\big\|q^{(2)}\big)$ for all $\alpha>0$. Naturally this also holds for $\alpha=0$ as a pointwise limit of the above. Finally, condition (i) is equivalent with \eqref{eq:0alphainfty}, and we are in the situation of Theorem \ref{thm:asymptocatalytic} providing the proof for the remainder of the claim.
\end{proof}

\section{Some applications}

In this section, we discuss some applications and corollaries of the results we have derived in this work. The first immediate result deals with optimal rates. Our basic result can be formulated in the abstract setting of Theorem \ref{thm:Fritz2022}, but it immediately translates into results dealing with the optimal rates of transforming a given statistical experiment into another. Our second field of immediate applications is quantum thermodynamics and the characterization of asymptotic catalytic thermal majorization of quantum states commuting with the thermal (Gibbs) state. Together with the results of \cite{farooq2024}, we give a coherent proof for the statements made in \cite{Brandao2014}.

\subsection{Achievable rates}\label{rem:rates}
We now discuss the rate of transforming i.i.d.\ copies of an input to a given output. The rate is characterized by {\it divergences} derived from the monotone homomorphisms and derivations. In the case of the preceding subsection dealing with dichotomies with a dominating second distribution, these divergences are simply the R\'{e}nyi divergences $D_\alpha$ for $\alpha\in[0,\infty]$. In the general setting of Theorem \ref{thm:Fritz2022} with a preordered semiring $S$ and the degenerate homomorphism $\|\cdot\|:S\to\R_{>0}^d\cup\{0\}$, these `divergences' are defined as follows: We fix a power universal $u\in S$. For any nondegenerate homomorphism $\Phi:S\to\mb K$, $\mb K\in\{\R_+,\R_+^{\rm op},\T\R_+,\T\R_+^{\rm op}\}$, with trivial kernel, the associated divergence is
\begin{equation}
D(x)=\frac{\log{\Phi(x)}}{\log{\Phi(u)}},\qquad\forall x\in S\setminus\{0\}.
\end{equation}
For any $k\in\{1,\ldots,d\}$ and any derivation $\Delta$ at $\|\cdot\|_{(k)}$ normalized according to $\Delta(u)=1$, the corresponding divergence is
\begin{equation}
D(x)=\frac{\Delta(x)}{\|x\|_{(k)}},\qquad\forall x\in S\setminus\{0\}.
\end{equation}
We note that, whenever $x\succeq y$, then $D(x)\geq D(y)$ for all of these divergences. Moreover the `additivity' $D(xy)=D(x)+D(y)$ holds for any $x,y\in S\setminus\{0\}$. In the case of dichotomies with a dominating second column, the choice
\begin{equation}
u=\left(\begin{array}{cc}
1&1/2\\
0&1/2
\end{array}\right),
\end{equation}
gives us the familiar R\'{e}nyi divergences as defined in \eqref{eq:RenyiDivergences}. According to Proposition 8.5 of \cite{fritz2022}, the set $\mc D$ of these divergences is a compact Hausdorff space with respect to the coarsest topology where the comparison maps
\begin{equation}
\mc D\ni D\mapsto D(x)-D(y)\in\R,\qquad\forall x,y\in S\setminus\{0\},\ x\sim y,
\end{equation}
are continuous. Note that the conditions (i) and (ii) of Theorem \ref{thm:Fritz2022} can now be concisely reformulated as $D(x)>D(y)$ for all $D\in\mc D$.

Let us pick $x,y\in S\setminus\{0\}$ such that $x\sim y$ and $x$ is a power universal. Note that, since $x$ is a power universal and $1\succeq 1$, then there is $k\in\N$ such that $x^k\succeq 1$. Thus, $\|x\|^k=\|1\|=(1,\ldots,1)$ so that 
$\|x\|=\|1\|$ and $x\sim 1$. Hence, $1\sim x\sim y$. We say that $r\geq0$ is {\it $(x,y)$-achievable} if there exists a sequence $(m_n)_{n=1}^\infty\in\N^\N$ of natural numbers with $r\leq\liminf_{n\to\infty}m_n/n$ and $x^n\succeq y^{m_n}$ for any $n\in\N$ large enough. The {\it optimal achievable rate} is then
\begin{equation}
r(x\to y):=\sup\{r\geq0\,|\,r\ {\rm is}\ (x,y){\rm -achievable}\}.
\end{equation}

\begin{corollary}\label{cor:ratemin}
With the above assumptions and definitions,
\begin{equation}\label{eq:ratemin}
r(x\to y)=\min_{D\in\mc D}\frac{D(x)}{D(y)}.
\end{equation}
\end{corollary}

Note that, according to the compactness result referenced to above, the minimum on the right-hand side exists. Indeed, $D\mapsto D(x)=D(x)-D(1)$ (recall that $x\sim 1$) is continuous and so is $D\mapsto D(y)$.

\begin{proof}
We prove \eqref{eq:ratemin} by first showing that any achievable rate is upper bounded by $D(x)/D(y)$ for any additive and monotone divergence $D$ (i.e.,\ not only for $D\in\mc D$). This proof is completely standard and does not use the earlier results. This shows that $r(x\to y)\leq\min_{D\in\mc D}D(x)/D(y)$. Then we show that $r(x\to y)\geq\min_{D\in\mc D}D(x)/D(y)$ with a simple application of Theorem \ref{thm:Fritz2022}.

Suppose that $r\geq0$ is $(x,y)$-achievable, i.e.,\ there is a sequence $(m_n)_{n=1}^\infty\in\N^\N$ such that $r\leq\liminf_{n\to\infty}m_n/n$ and $x^n\succeq y^{m_n}$ for any $n\geq n_0$ where $n_0\in\N$ is some sufficiently large number. We fix, for now, a general additive and monotone divergence $D:S\setminus\{0\}\to\R$, i.e,\ $D(xy)=D(x)+D(y)$ for all $x,y\in S$ and $x\succeq y$ $\Rightarrow$ $D(x)\geq D(y)$. From the additivity and monotonicity it immediately follows that $m_n/n\leq D(x)/D(y)$ for all $n\geq n_0$. From the definition of $(x,y)$-achievability it follows that, for all $\varepsilon>0$, there is $n_\varepsilon\in\N$ which we can choose to be larger than $n_0$ such that $r-\varepsilon\leq \inf_{n\geq n_\varepsilon} m_n/n$. But as $n_\varepsilon\geq n_0$, this means that $r-\varepsilon\leq D(x)/D(y)$. As this holds for all $\varepsilon>0$, we have $r\leq D(x)/D(y)$. Taking the supremum over achievable rates and then the minimum over $D\in\mc D$, we get $r(x\to y)\leq\min_{D\in\mc D}D(x)/D(y)$.

Let us then assume that $r=p/q$ with $p,q\in\N$ and that $r<\min_{D\in\mc D}D(x)/D(y)$. We now have
\begin{equation}
D(x^q)=qD(x)=\frac{p}{r}D(x)>pD(x)\max_{D'\in\mc D}\frac{D'(y)}{D'(x)}\geq pD(y)=D(y^p),\qquad\forall D\in\mc D.
\end{equation}
Thus, according to (the reformulation of) Theorem \ref{thm:Fritz2022}, we know that $x^{qn}\succeq y^{pn}$ for any $n\in\N$ large enough. This means that $r=p/q$ is $(x,y)$-achievable, so that $r\leq r(x\to y)$. Since this holds for any rational $r<\min_{D\in\mc D}D(x)/D(y)$, we finally have $r(x\to y)\geq\min_{D\in\mc D}D(x)/D(y)$. Combining our observations, we arrive at \eqref{eq:ratemin}.
\end{proof}

Suppose now that $P=\big(p^{(1)},p^{(2)}\big)$ and $Q=\big(q^{(1)},q^{(2)}\big)$ are dichotomies such that ${\rm supp}\,p^{(1)}\subsetneq{\rm supp}\,p^{(2)}$ (i.e.,\ $P$ is a power universal of the $d=2$ dominating column semiring). The statement of \eqref{eq:ratemin} now takes the form
\begin{equation}
r(P\to Q)=\min_{\alpha\in[0,\infty]}\frac{D_\alpha\big(p^{(1)}\big\|p^{(2)}\big)}{D_\alpha\big(q^{(1)}\big\|q^{(2)}\big)}.
\end{equation}
Naturally, similar results also hold in the higher $d$ case, but these look quite complex. Let us see however what the optimal rate is in the case of dichotomies with full support studied as a special case in \cite{farooq2024}: Given dichotomies $P=\big(p^{(1)},p^{(2)}\big)$ and $Q=\big(q^{(1)},q^{(2)}\big)$ where ${\rm supp}\,p^{(1)}={\rm supp}\,p^{(2)}$, ${\rm supp}\,q^{(1)}={\rm supp}\,q^{(2)}$, and $p^{(1)}\neq p^{(2)}$, we have
\begin{equation}
r(P\to Q)=\min\left\{\frac{D_\alpha\big(p^{(1)}\big\|p^{(2)}\big)}{D_\alpha\big(q^{(1)}\big\|q^{(2)}\big)},\frac{D_\alpha\big(p^{(2)}\big\|p^{(1)}\big)}{D_\alpha\big(q^{(2)}\big\|q^{(1)}\big)}\,\middle|\,\alpha\in[1/2,\infty]\right\}.
\end{equation}
We note that in this case the minimization has to be performed on a larger set. In the $d=2$ case of the minimal restrictions semiring discussed in Section \ref{sec:findings}, we notice that the minimization task is smaller: If $P=\big(p^{(1)},p^{(2)}\big)$ and $Q=\big(q^{(1)},q^{(2)}\big)$ are pairs of probability distributions such that ${\rm supp}\,p^{(1)}\cap{\rm supp}\,p^{(2)}\neq\emptyset\neq{\rm supp}\,q^{(1)}\cap{\rm supp}\,q^{(2)}$, ${\rm supp}\,p^{(1)}\nsubseteq{\rm supp}\,p^{(2)}$, and ${\rm supp}\,p^{(2)}\nsubseteq{\rm supp}\,p^{(1)}$ (i.e.,\ $P$ is a power universal of the minimal restrictions semiring),
\begin{equation}
r(P\to Q)=\min\left\{\frac{D_\alpha\big(p^{(1)}\big\|p^{(2)}\big)}{D_\alpha\big(q^{(1)}\big\|q^{(2)}\big)},\frac{D_\alpha\big(p^{(2)}\big\|p^{(1)}\big)}{D_\alpha\big(q^{(2)}\big\|q^{(1)}\big)}\,\middle|\,\alpha\in[0,1/2]\right\}.
\end{equation}

\subsection{Thermal majorization}\label{sec:thermal}

Let us consider a finite-dimensional quantum thermodynamic system. We assume that the system's steady state is described by the Gibbs state $\gamma_\beta$ at inverse temperature $\beta>0$, i.e.,
\begin{equation}
\gamma_\beta=\frac{1}{Z}e^{-\beta H}, 
\end{equation}
where $Z=\tr{e^{-\beta H}}$ and $H$ is the Hamiltonian of the system. Let $\{h_i\}_{i=0}^{d-1}$ be the orthonormal basis of the system's Hilbert space that diagonalizes $H$, i.e.,\ $H=\sum_{i=0}^{d-1}E_i|h_i\>\<h_i|$ where $E_i$ are the eigenvalues of $H$. This means
\begin{equation}
\gamma_\beta=\sum_{i=0}^{d-1} \lambda^\beta_i |h_i\>\<h_i|, 
\end{equation}
where $\lambda^\beta_i=e^{-\beta E_i}/Z$. We define the probability vector $\lambda^\beta:=(\lambda^\beta_0,\ldots,\lambda^\beta_{d-1})$. From now on, we assume that $E_i>0$ for $i=0,\ldots,d-1$; we are free to assume this as the zero energy can be shifted if necessary.

We are interested now in the question when a state of this thermodynamic system can be transformed into another with a Gibbs-preserving channel. Recall that a quantum channel (completely positive trace-preserving linear map) $\Phi$ whose input and output systems coincide with this thermodynamic system is Gibbs-preserving if $\Phi(\gamma_\beta)=\gamma_\beta$. When $\Phi(\rho)=\sigma$ where $\Phi$ is a Gibbs-preserving channel, we say that $\rho$ {\it thermally majorizes} $\sigma$ and denote $\rho\rgeq_\beta\sigma$. Thermal majorization of $\rho$ over $\sigma$ in large samples means that there is $n\in\N$ such that $\rho^{\otimes n}\rgeq_\beta\sigma^{\otimes n}$, i.e.,\ there is a channel $\Phi_n$ on $n$ copies of the system such that $\Phi_n(\rho^{\otimes n})=\sigma^{\otimes n}$ and $\Phi_n(\gamma_\beta^{\otimes n})=\gamma_\beta^{\otimes n}$. Catalytic thermal majorization means that there are states $\tau^{(1)},\tau^{(2)}$ on possibly another system (e.g.,\ a heat bath), where we may interpret $\tau^{(2)}$ as the Gibbs state of the other system, and a channel $\Phi$ such that $\Phi(\rho\otimes\tau^{(1)})=\sigma\otimes\tau^{(1)}$ and $\Phi(\gamma_\beta\otimes\tau^{(2)})=\gamma_\beta\otimes\tau^{(2)}$. We are particularly interested in asymptotic catalytic thermal majorization: for any $\varepsilon>0$, there are catalysts $\tau_\varepsilon^{(1)},\tau_\varepsilon^{(2)}$ on some system (which may depend on $\varepsilon$), a state $\sigma_\varepsilon$ on our system of interest such that $\|\sigma-\sigma_\varepsilon\|_1<\varepsilon$, and a channel $\Phi_\varepsilon$ such that $\Phi_\varepsilon(\rho\otimes\tau_\varepsilon^{(1)})=\sigma_\varepsilon\otimes\tau_\varepsilon^{(1)}$ and $\Phi_\varepsilon(\gamma_\beta\otimes\tau_\varepsilon^{(2)})=\gamma_\beta\otimes\tau_\varepsilon^{(2)}$.

We concentrate on the case where the states commute with the Gibbs state. If the energy spectrum is nondegenerate, i.e.,\ there are no repeating eigenvalues $E_i$, then $[\rho,\gamma_\beta]=0$ if and only if $\rho$ diagonalizes in the energy eigenbasis, i.e.,
\begin{equation}\label{eq:diagonalizable}
\rho=\sum_{i=0}^{d-1}\lambda_i |h_i\>\<h_i|.
\end{equation}
Even if the energy spectrum is degenerate, we will focus on states which are diagonalizable in the energy eigenbasis, i.e.,\ those $\rho$ as in \eqref{eq:diagonalizable} with $\lambda_i\geq0$, $\lambda_0+\cdots+\lambda_{d-1}=1$. To explicitly refer to the state, we will associate the above $\{h_i\}_i$-diagonal state $\rho$ with the probability vector $\lambda^\rho=(\lambda^\rho_0,\ldots,\lambda^\rho_{d-1})$ where
\begin{equation}
\lambda^\rho_i=\lambda_i=\<h_i|\rho\,h_i\>,\qquad i=0,\ldots,d-1.
\end{equation}


If $\rho$ and $\sigma$ are $\{h_i\}_i$-diagonal, then one can easily see that $\Phi(\rho)=\sigma$ for a channel $\Phi$ if and only if $T\lambda^\rho=\lambda^\sigma$ where $T=(T_{i,j})_{i,j=0}^{d-1}$ is a stochastic matrix given by $T_{i,j}=\<h_i|\Phi(|h_j\>\<h_j|)h_i\>$. This means that, for $\{h_i\}_i$-diagonal states $\rho$ and $\sigma$, we have $\rho\rgeq_\beta\sigma$ if and only if there is a stochastic matrix $T$ such that $T\lambda^\rho=\lambda^\sigma$ and $T\lambda^\beta=\lambda^\beta$, i.e.,
\begin{equation}
\big(\lambda^\rho,\lambda^\beta\big)\rgeq (\lambda^\sigma,\lambda^\beta).
\end{equation}
Thus, we arrive at matrix majorization in the case $d=2$ where the second column dominates the first one since we assume that each energy $E_i$ in the energy spectrum occurs with non-vanishing probability $e^{-\beta E_i}/Z$. Moreover, we have that $\rho$ thermally majorizes $\sigma$ in the asymptotic catalytic regime if and only if, for all $\varepsilon>0$, there are catalytic probability vectors $r_\varepsilon^{(1)},r_\varepsilon^{(2)}$ (of undetermined finite length), $\lambda_\varepsilon\in\mc P_d$ such that $\|\lambda^\sigma-\lambda_\varepsilon\|_1<\varepsilon$ and a stochastic matrix $T_\varepsilon$ such that $T_\varepsilon(\lambda^\rho\otimes r_\varepsilon^{(1)})=\lambda_\varepsilon\otimes r_\varepsilon^{(1)}$ and $T_\varepsilon(\lambda^\beta\otimes r_\varepsilon^{(2)})=\lambda^\beta\otimes r_\varepsilon^{(2)}$. Thus, asymptotic catalytic thermal majorization can be recast as asymptotic catalytic matrix majorization which we have exhaustively characterized in Section \ref{sec:2column}.

Consider now states $\rho$ and $\sigma$ both diagonalizable in the energy eigenbasis $\{h_i\}_i$. If $\rho$ and $\sigma$ are both of full rank (as is the Gibbs state $\gamma_\beta$), then, according to Corollary 24 of \cite{farooq2024}, $\rho$ asymptotically catalytically thermally majorizes $\sigma$ if and only if $D_\alpha(\lambda^\rho\|\lambda^\beta)\geq D_\alpha(\lambda^\sigma\|\lambda^\beta)$ and $D_\alpha(\lambda^\beta\|\lambda^\rho)\geq D_\alpha(\lambda^\beta\|\lambda^\sigma)$ for all $\alpha\geq 1/2$. Now we can also address the case of general rank. If $\rho$ is not of full rank, then, according to Proposition \ref{pro:poweruniversaldcgeneral}, $(\lambda^\rho,\lambda^\beta)$ is a power universal of $S^2_{\rm d.c.}$ and, according to Theorem \ref{thm:asymptocatalytic}, $\rho$ asymptotically catalytically thermally majorizes $\sigma$ if and only if only $D_\alpha(\lambda^\rho\|\lambda^\beta)\geq D_\alpha(\lambda^\sigma\|\lambda^\beta)$ for all $\alpha\geq0$. In this case, we have $D_\alpha(\lambda^\beta\|\lambda^\rho)=\infty$ for $\alpha\geq 1$. Because also
\begin{align}
D_\alpha(\lambda^\beta\|\lambda^\rho)&=\frac{\alpha}{1-\alpha}D_{1-\alpha}(\lambda^\rho\|\lambda^\beta)\geq \frac{\alpha}{1-\alpha}D_{1-\alpha}(\lambda^\sigma\|\lambda^\beta)=D_\alpha(\lambda^\beta\|\lambda^\sigma)
\end{align}
for all $\alpha\in (0,1)$, we see that the asymptotic catalytic thermal majorization of the state $\rho$ of not full rank over $\sigma$ is equivalent with exactly the same inequalities as in the full-rank case. The case where $\rho$ asymptotically catalytically thermally majorizes $\sigma$ while $\rho$ is of full support and $\sigma$ is not is impossible. We can see this as follows: Suppose that $\rho$ asymptotically catalytically thermally majorizes $\sigma$. Because $D_\alpha$ are monotones, we now have that $D_\alpha(\lambda^\rho\|\lambda^\beta)\geq D_\alpha(\lambda^\sigma\|\lambda^\beta)$ for all $\alpha\geq0$. As $\alpha\to0$, it follows that
\begin{equation}
\sum_{i\in{\rm supp}\,\lambda^\rho} e^{-\beta E_i}\leq \sum_{i\in{\rm supp}\,\lambda^\sigma} e^{-\beta E_i}.
\end{equation}
Because $E_i>0$ for all $i$, this means that $|{\rm supp}\,\lambda^\rho|\leq|{\rm supp}\,\lambda^\sigma|$, i.e.,\ the support of $\rho$ cannot be larger than the support of $\sigma$.

Note that, whenever $\rho$ diagonalizes in the energy eigenbasis, then $D(\lambda^\rho\|\lambda^\beta)=\tilde{D}_\alpha(\rho\|\gamma_\beta)$ where $\tilde{D}_\alpha$ is the `sandwiched' quantum R\'{e}nyi divergence, i.e.,
\begin{equation}\label{eq:sandwiched}
\tilde{D}_\alpha(\rho\|\tau)=\left\{\begin{array}{ll}
-\log{\tr{\sigma\,{\rm supp}\,\rho}}&\text{if}\ \alpha=0\ {\rm and}\ {\rm supp}\,\rho\cap{\rm supp}\,\sigma\neq\{0\}\\
\frac{1}{\alpha-1}\log{\tr{\big(\sigma^{\frac{1-\alpha}{2\alpha}}\rho\sigma^{\frac{1-\alpha}{2\alpha}}\big)^\alpha}}&\text{if}\ \alpha\in(0,1) \ {\rm and}\ {\rm supp}\,\rho\cap{\rm supp}\,\sigma\neq\{0\}\\
\tr{\rho(\log{\rho}-\log{\sigma})}&\text{if}\ \alpha=1\ {\rm and}\ {\rm supp}\,\rho\subseteq{\rm supp}\,\sigma\\
\frac{1}{\alpha-1}\log{\tr{\big(\sigma^{\frac{1-\alpha}{2\alpha}}\rho\sigma^{\frac{1-\alpha}{2\alpha}}\big)^\alpha}}&\text{if}\ \alpha\in(1,\infty)\ {\rm and}\ {\rm supp}\,\rho\subseteq{\rm supp}\,\sigma\\
\big\|\sigma^{-1/2}\rho\sigma^{-1/2}\big\|&\text{if}\ \alpha=\infty\ {\rm and}\ {\rm supp}\,\rho\subseteq{\rm supp}\,\sigma\\
\infty&{\rm otherwise}
\end{array}\right.,
\end{equation}
where $\|\cdot\|$ is the operator norm and we use ${\rm supp}\,\rho$ for both the supporting subspace of $\rho$ and for the orthogonal projection onto this subspace. Let us define the free energies $F^\pm_{\alpha,\beta}$ through
\begin{equation}
F^+_{\alpha,\beta}(\rho):=\tilde{D}_\alpha(\rho\|\gamma_\beta),\quad F^-_{\alpha,\beta}(\rho):=\tilde{D}_\alpha(\gamma_\beta\|\rho)
\end{equation}
for all $\alpha\geq1/2$. Thus, the above considerations yield the following result.
\begin{corollary}\label{cor:thermal}
Suppose that the eigen-energies $E_i>0$ for all $i$, and $\rho$ and $\sigma$ diagonalize in the energy eigenbasis. Then $\rho$ asymptotically catalytically thermally majorizes $\sigma$ if and only if
\begin{equation}
F^\pm_{\alpha,\beta}(\rho)\geq F^\pm_{\alpha,\beta}(\sigma),\qquad\forall\alpha\geq\frac{1}{2}.
\end{equation}
\end{corollary}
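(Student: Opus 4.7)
The plan is to reduce the question of asymptotic catalytic thermal majorization of diagonal states to asymptotic catalytic matrix majorization of the associated eigenvalue dichotomies, and then invoke the characterization results already established (Corollary~\ref{cor:asymptocatalytic} and Theorem~\ref{thm:asymptocatalytic}). The translation between $F^\pm_{\alpha,\beta}$ and the classical R\'enyi divergences $D_\alpha$ is immediate because $\rho$, $\sigma$, and $\gamma_\beta$ pairwise commute, so the sandwiched quantum R\'enyi divergences reduce to the classical ones evaluated on the eigenvalue vectors:
\begin{equation}
F^+_{\alpha,\beta}(\rho)=\tilde D_\alpha(\rho\|\gamma_\beta)=D_\alpha(\lambda^\rho\|\lambda^\beta),\qquad F^-_{\alpha,\beta}(\rho)=\tilde D_\alpha(\gamma_\beta\|\rho)=D_\alpha(\lambda^\beta\|\lambda^\rho),
\end{equation}
and analogously for $\sigma$. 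As discussed just before the statement, since $\rho$, $\sigma$, $\gamma_\beta$ are all diagonal in $\{h_i\}_i$, a thermal channel $\Phi$ taking $\rho$ to $\sigma$ may be replaced by the stochastic matrix $T_{i,j}=\<h_i|\Phi(|h_j\>\<h_j|)h_i\>$ which sends $\lambda^\rho\mapsto\lambda^\sigma$ and fixes $\lambda^\beta$; conversely any such $T$ lifts to a thermal channel via Kraus-diagonal embedding. Hence $\rho$ asymptotically catalytically thermally majorizes $\sigma$ iff the dichotomy $(\lambda^\rho,\lambda^\beta)$ asymptotically catalytically majorizes $(\lambda^\sigma,\lambda^\beta)$ in $S^2_{\rm d.c.}$.

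First I would split into cases by the rank of $\rho$. If $\rho$ is of full rank (equivalently $\lambda^\rho$ has full support, matching that of $\lambda^\beta$), then either $\sigma$ is also full rank, in which case both dichotomies have matching common support and Corollary~\ref{cor:asymptocatalytic} (with the common-support input case handled by Corollary~24 of~\cite{farooq2024}) yields the claim directly, translated via the identification above. Or $\sigma$ is not of full rank, in which case the text immediately preceding the statement already rules this out: any $D_\alpha(\lambda^\rho\|\lambda^\beta)\geq D_\alpha(\lambda^\sigma\|\lambda^\beta)$ inequality as $\alpha\downarrow 0$ forces $|{\rm supp}\,\lambda^\rho|\leq|{\rm supp}\,\lambda^\sigma|$, contradicting full rank of $\rho$. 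Thus if the $F^\pm$ inequalities hold and $\rho$ is full rank, so is $\sigma$.

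If $\rho$ is not of full rank then ${\rm supp}\,\lambda^\rho\subsetneq{\rm supp}\,\lambda^\beta$, so Proposition~\ref{pro:poweruniversaldcgeneral} ensures $(\lambda^\rho,\lambda^\beta)$ is power universal in $S^2_{\rm d.c.}$, and Theorem~\ref{thm:asymptocatalytic} applies. In this case $D_\alpha(\lambda^\beta\|\lambda^\rho)=\infty$ for $\alpha\geq 1$, so the $F^-$ inequalities are automatic there; for $\alpha\in[1/2,1)$ the identity $D_\alpha(\lambda^\beta\|\lambda^\rho)=\tfrac{\alpha}{1-\alpha}D_{1-\alpha}(\lambda^\rho\|\lambda^\beta)$ (and similarly for $\sigma$) shows that the $F^-$ inequalities on $[1/2,1)$ are equivalent to the $F^+$ inequalities on $(0,1/2]$, and these are the $\alpha\in(0,1/2]$ instances already contained in the $F^+$ inequalities. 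Thus the $F^\pm$ inequalities for $\alpha\geq 1/2$ are equivalent to the $F^+$ inequalities for all $\alpha\in[0,\infty]$ (using continuity of $\alpha\mapsto D_\alpha$ to handle the endpoints $\alpha=0$ and $\alpha=\infty$), which is precisely condition~(i) of Theorem~\ref{thm:asymptocatalytic}. The converse direction is obtained by running the same equivalences backwards and noting that the divergences are monotones under the stochastic maps involved.

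The only subtlety, which I expect to be the main bookkeeping point rather than a real obstacle, is verifying the equivalence between asymptotic catalytic \emph{thermal} majorization (which allows the catalyst to live on an arbitrary auxiliary system carrying its own Gibbs state) and the matrix-majorization notion of asymptotic catalytic majorization of dichotomies in $S^2_{\rm d.c.}$. This is handled by noting that any auxiliary system of dimension $m$ with Hamiltonian giving Gibbs weights $\mu^\beta$ contributes a column pair $(\mu^{(1)},\mu^\beta)$ to the catalyst, and conversely the binomial-type catalyst $r_\varepsilon^{(k)}$ produced in Theorem~\ref{thm:asymptocatalytic} can be realized as a diagonal state of some effective auxiliary Hamiltonian; the approximation error on $\sigma$ translates into trace-norm error because $\|\sigma-\sigma_\varepsilon\|_1=\|\lambda^\sigma-\lambda^{\sigma_\varepsilon}\|_1$ for commuting diagonal states.
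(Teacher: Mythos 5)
Your proposal is correct and follows essentially the same route as the paper: reduce thermal majorization of energy-diagonal states to majorization of the dichotomies $(\lambda^\rho,\lambda^\beta)$ and $(\lambda^\sigma,\lambda^\beta)$, split on whether $\rho$ has full rank (invoking Corollary \ref{cor:asymptocatalytic} in the full-rank case and Theorem \ref{thm:asymptocatalytic} plus the skew-symmetry $D_\alpha(q\|p)=\tfrac{\alpha}{1-\alpha}D_{1-\alpha}(p\|q)$ in the rank-deficient case), and rule out the full-rank-$\rho$/deficient-$\sigma$ case via the $\alpha\downarrow 0$ limit. The bookkeeping point you flag about realizing the catalyst on an auxiliary system is the same identification the paper makes implicitly, and your handling of it is fine.
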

Note also that, when $\rho$ and $\sigma$ are as in Corollary \ref{cor:thermal} above, then the optimal rate at which i.i.d.\ copies of $\rho$ can be transformed into $\sigma$ with Gibbs-preserving channels is
\begin{equation}
r\left(\rho\overset{\beta}{\to}\sigma\right)=\min\left\{\frac{F^+_{\alpha,\beta}(\rho)}{F^+_{\alpha,\beta}(\sigma)},\frac{F^-_{\alpha,\beta}(\rho)}{F^-_{\alpha,\beta}(\sigma)}\,\middle|\,\alpha\in[1/2,\infty]\right\},
\end{equation}
a direct consequence of Corollary \ref{cor:ratemin}. As mentioned in the Introduction, although this result was previously claimed in \cite{Brandao2014}, our present proof avoids some technical issues in the original proof. We would also like to highlight the fact that, instead of using the result on vector majorization proven in \cite{klimesh2007inequalities}, our semiring methods allow us to directly study the problem from the matrix majorization view without having to embed the problem in the vector majorization form as was done in \cite{Brandao2014}. 

Our results also allow us to study multiple thermal majorization: for $d$-tuples $\big(\rho^{(1)},\ldots,\rho^{(d)}\big)$ and $\big(\sigma^{(1)},\ldots,\sigma^{(d)}\big)$ of states, we denote
\begin{equation}
\big(\rho^{(1)},\ldots,\rho^{(d)}\big)\rgeq_\beta \big(\sigma^{(1)},\ldots,\sigma^{(d)}\big)
\end{equation}
if there is a Gibbs-preserving channel $\Phi$ (i.e.,\ $\Phi(\gamma_\beta)=\gamma_\beta$) such that $\Phi\big(\rho^{(k)}\big)=\sigma^{(k)}$ for $k=1,\ldots,d$. This majorization can naturally also be studied in the large-sample, catalytic, and asymptotic regimes, the latter one arguably being the most physically relevant case. If we assume that all the states $\rho^{(k)}$ and $\sigma^{(k)}$ diagonalize in the energy eigenbasis, we are naturally again back to matrix majorization with the dominating column $\lambda^\beta$. We point out though that, in order to derive necessary and sufficient conditions for asymptotic thermal majorization in this case, $\big(\lambda^{\rho^{(1)}},\ldots,\lambda^{\rho^{(d)}},\lambda^\beta\big)$ has to be a power universal of the semiring $S^{d+1}_{\rm d.c.}$. Conditions for this are quite strict, as we have seen in Proposition \ref{pro:poweruniversaldcgeneral}. Thus, there remains plenty of unanswered questions in multiple thermal majorization.

\section{Conclusion and outlook}

We have identified sufficient conditions for matrix majorization in large samples (i.e.,\ having many copies of the matrices) as well as in the catalytic regime with varying support conditions. This is in contrast to the earlier work \cite{farooq2024}, where the columns within each matrix were assumed to have a common support. We have seen that the conditions can be written in the form of inequalities involving multivariate generalizations of the bivariate R\'{e}nyi divergences $D_{\underline{\alpha}}$ but, unlike in the common-support case, the support restrictions greatly affect the set of multivariate divergences to be considered. Moreover, additional conditions arise at the boundaries where the $\underline{\alpha}$-parameter space is cut. We have also derived sufficient conditions for asymptotic large-sample and catalytic matrix majorization in the case of different support conditions. Many of these conditions are also necessary. In particular, our results completely resolve the problem of asymptotic catalytic thermal majorization in the field of quantum thermodynamics also studied in \cite{Brandao2014}.

One issue with our results is that the sufficient conditions for the majorization of a matrix $P$ over another matrix $Q$, both with all columns summing to one, in the large-sample or catalytic setting, and both in the exact and asymptotic regime, is that $P$ has to be a power universal of the preordered semiring we are studying. In the case of exact large-sample or catalytic majorization (Theorems \ref{thm:majorization} and \ref{thm:majorizationdominatingcolumn}), the strict ordering of the values of the homomorphisms already imply that $P$ is power universal. However, in the case of asymptotic large-sample or catalytic majorization (Theorems \ref{thm:genasymptocatalytic} and \ref{thm:asymptocatalytic}), we have to require $P$ to be power universal. Being power universal is typically quite restrictive, as highlighted in Propositions \ref{lem:poweruniversalconditions} and \ref{pro:poweruniversaldcgeneral}. We suggest the following strategy to overcome this: If $P$ is not a power universal with respect to either of the semirings we are studying in this work, we should redefine the semiring so that all the matrices in this new semiring are required to have the same support restrictions satisfied by $P$. As an example, $P=\big(p^{(1)},\ldots,p^{(d)}\big)$ might be such that $p^{(d)}$ dominates all other columns and ${\rm supp}\,p^{(1)}=\ldots={\rm supp}\,p^{(d-1)}$. We then consider the semiring consisting of all matrices satisfying the same support conditions. If we also remove possible repetitions of the same columns, $P$ is now a power universal of the new semiring and we obtain tighter conditions for majorization of $P$ over other matrices. More generally, given any $P$, we specify for any ordered pair of columns $(k,k^\prime)$, $k,k^\prime\in[d]$, whether either ${\rm supp}\,p^{(k)}\subseteq{\rm supp}\,p^{(k^\prime)}$, or ${\rm supp}\,p^{(k)}\supseteq{\rm supp}\,p^{(k^\prime)}$, or ${\rm supp}\,p^{(k)}={\rm supp}\,p^{(k^\prime)}$, or there is no condition. Then we define a new semiring $S_{\rm res}^d$ consisting of all matrices with the same support conditions. Extrapolating the results we found for the conditions for any $U\in S_{\rm res}^d$, with all columns summing to one, to be power universal in Propositions \ref{lem:poweruniversalconditions} and \ref{pro:poweruniversaldcgeneral}, and Lemma 12 in \cite{farooq2024} (the case of minimal restrictions, one dominating column, and equal supports, respectively), we expect that each condition on the supports of two columns will give rise to a corresponding condition for $U$, according to the following table. 

\vspace{3mm}
\begin{center}
    \begin{tabular}{|c|c|}
        \hline
        Condition on $P\in S_{\rm res}^d$&Condition on power universal $U\in S_{\rm res}^d$\\
        \hline
        ${\rm supp}\,p^{(k)}\subseteq{\rm supp}\,p^{(k^\prime)}$&${\rm supp}\,u^{(k)}\subsetneq{\rm supp}\,u^{(k^\prime)}$\\
        ${\rm supp}\,p^{(k)}\supseteq{\rm supp}\,p^{(k^\prime)}$&${\rm supp}\,u^{(k)}\supsetneq{\rm supp}\,u^{(k^\prime)}$\\
        ${\rm supp}\,p^{(k)}={\rm supp}\,p^{(k^\prime)}$&$u^{(k)}\neq u^{(k^\prime)}$\\
        no condition&${\rm supp}\,u^{(k)}\nsubseteq{\rm supp}\,u^{(k^\prime)}$ and ${\rm supp}\,u^{(k^\prime)}\nsubseteq{\rm supp}\,u^{(k)}$\\
        \hline
    \end{tabular}
\end{center}
\vspace{3mm}

In both semirings studied in this paper, we were able to show that for any matrix $U$ with all columns summing to one, 
\begin{equation}\label{eq:poweruniversalconjecture}
    U\text{ is power universal}\quad\Longleftrightarrow\quad\Phi(U)\neq1\quad\forall\,\Phi\text{ nondegenerate homomorphism.} 
\end{equation}
In proving this, we used conditions on $U$ similar to the ones in the table above. Hence, we conjecture that \eqref{eq:poweruniversalconjecture} also holds for a semiring $S_{\rm res}^d$ with other suppport restrictions. Going even further, \eqref{eq:poweruniversalconjecture} might be true for more general semirings (i.e. not just the matrix semirings considered here). 

As explained above, given any matrix $P$ that does not have repeated columns, we can tailor the semiring $S_{\rm res}^d$ such that $P$ is power universal in that semiring. We expect that the homomorphisms and derivations corresponding to $S_{\rm res}^d$ are similar to the ones found in this paper, precisely those for parameters $\underline{\alpha}$ or $\underline{\beta}$ that are well-defined, i.e. those where there does not appear a factor $\left(q^{(k)}_i\right)^\alpha$ or $\left(q^{(k)}_i\right)^\beta$ with $\alpha,\beta<0$ where $q^{(k)}_i$ vanishes for at least one matrix $Q$ in the semiring. In this way, we would be able to find the conditions for large-sample and catalytic majorization of $P$ over $Q$, both for the exact and asymptotic case, for $P$ with any kind of support structure, not just the ones that are power universal in the semirings discussed here or in \cite{farooq2024}. 



From a practical point of view, the notion of asymptotic matrix majorization in the large-sample or catalytic regime, as characterized in Theorems \ref{thm:genasymptocatalytic} and \ref{thm:asymptocatalytic}, is the most relevant case. After all, it is often enough to know that large-sample or catalytic majorization only holds approximately, possibly with vanishing error. Our current methods are geared towards identifying exact majorization in large samples or catalytically and often yield extra conditions which are not needed in the asymptotic picture, as we have seen. Finding conditions directly for the desired asymptotic results without having to take a scenic route through the theory and results for exact majorization would obviously be of high interest.

There is ever-increasing interest also in multivariate quantum divergences \cite{mosonyi2024geometric,nuradha2024multivariate}. However, although we may define various multivariate quantum divergences and often even show that they have desirable properties, e.g.,\ monotonicity under operating with the same quantum channel on all the states (data processing inequality) and additivity under tensor products, characterization of all the quantum divergences with these (and possibly other) properties seems to be a daunting task, even in the bivariate case \cite{Tomamichel2016Book}. Despite these difficulties, we may still define semirings of $d$-tuples $\vec{\rho}=\big(\rho^{(1)},\ldots,\rho^{(d)}\big)$ of quantum states with varying conditions for the supports ${\rm supp}\,\rho^{(k)}$, where the addition and multiplication are defined through the componentwise direct sum and tensor product respectively. These semirings can be preordered by denoting
\begin{equation}
\big(\rho^{(1)},\ldots,\rho^{(d)}\big)\rgeq \big(\sigma^{(1)},\ldots,\sigma^{(d)}\big)
\end{equation}
when there is a quantum channel $\Phi$ such that $\Phi\big(\rho^{(k)}\big)=\sigma^{(k)}$ for $k=1,\ldots,d$. We may still be able to identify relevant subsets of monotone homomorphisms and derivations on these semirings yielding conditions for large-sample quantum majorization. 

Despite the issues we face in the case of quantum majorization, various types of multivariate quantum divergences have been presented in literature. For a (non-extreme) probability vector $\underline{\alpha}\in\R_+^d$ and any quantum state tuple $\vec{\rho}$, one can minimize the weighted sum $\sum_{k=1}^d \alpha_k\delta_2\big(G,\rho^{(k)}\big)$ over all positive semi-definite operators $G$ where $\delta_2(\rho,\sigma)^2=\sum_j\big(\log{\lambda_j(\rho^{-1}\sigma)}\big)^2$ is the squared Euclidean distance between $\rho$ and $\sigma$ and $\lambda_j(\tau)$ are the singular values of $\tau$. The unique minimizer $G_{\underline{\alpha}}(\vec{\rho})$ of this problem is called the {\it (weighted) matrix mean}. According to \cite{Bhatia_Karandikar_2012}, whenever $\Phi$ is a quantum channel (a completely positive trace-preserving linear map), then
\begin{equation}
G_{\underline{\alpha}}\big(\Phi\big(\rho^{(1)}\big),\ldots,\Phi\big(\rho^{(d)}\big)\big)\geq G_{\underline{\alpha}}\big(\rho^{(1)},\ldots,\rho^{(d)}\big).
\end{equation}
Using this, it easily follows that $\Phi_{\underline{\alpha}}$,
\begin{equation}
\Phi_{\underline{\alpha}}(\vec{\rho})=\tr{G_{\underline{\alpha}}(\vec{\rho})},
\end{equation}
is a monotone homomorphism into $\R_+^{\rm op}$; the additivity and multiplicativity properties are easily proven. The corresponding divergences in case $d=2$ have a closed form introduced by Kubo and Ando \cite{Kubo_Ando_79}. Also particular barycentric constructions can be used to create multivariate divergences from bivariate ones \cite{mosonyi2024geometric}. Furthermore the matrix means and the bivariate {\it $\alpha$-$z$ quantum divergences} $D_{\alpha,z}$ (see, e.g.,\ \cite{Audenaert_Datta_2015}) have been used to define further multivariate quantum divergences \cite{bunth2023,Furuya_et_al_2023,mosonyi2024geometric}: Given $\alpha$ and $z$ such that
\begin{equation}\label{eq:alphazedcond}
0<\alpha<1,\ \max\{\alpha,1-\alpha\}\leq z\quad{\rm or}\quad\alpha>1,\ \max\{\alpha/2,\alpha-1\}\leq z\leq\alpha, 
\end{equation}
and a (non-extreme) probability vector $\underline{\alpha}$, one can define the multivariate quantum divergence $D_{\underline{\alpha},\alpha,z}$,
\begin{equation}
D_{\underline{\alpha},\alpha,z}(\vec{\rho})=D_{\alpha,z}\big(\rho^{(1)}\big\|G_{\underline{\alpha}}(\vec{\rho})\big).
\end{equation}
In the classical (commuting) case, $D_{\underline{\alpha},\alpha,z}$ reduces to the classical multivariate divergence corresponding to the homomorphism $\Phi_{\underline{\alpha'}}$ where $\alpha'_1=\alpha+(1-\alpha)\alpha_1$ and $\alpha'_k=(1-\alpha)\alpha_k$ for $k=2,\ldots,d$. In the quantum case, the map $\log{\big((\alpha-1)D_{\underline{\alpha},\alpha,z}(\cdot)\big)}$ is a monotone $\R_+^{\rm op}$-homomorphism when $\alpha<1$ and a monotone $\R_+$-homomorphism when $\alpha>1$. This is just one particular way of constructing multivariate quantities from bivariate quantities and matrix means.

The case of varying support conditions in quantum majorization is a much more complex issue than in the classical setting. The supports of quantum states are subspaces of the underlying Hilbert space which we can identify with the orthogonal projections onto those subspaces. The projection lattice of a Hilbert space is a far more complex structure than what we encounter in the classical setting. Yet, we can make some observations. If, e.g.,\ we consider majorization of state dichotomies of common support, we clearly arrive at conditions involving quantum divergences extending the notion of the classical R\'{e}nyi divergences $\big(p,q\big)\mapsto D_\alpha\big(p\big\|q\big)$ and $\big(p,q\big)\mapsto D_\alpha\big(q\big\|p\big)$ for $\alpha\in[1/2,\infty]$. However, if we allow ${\rm supp}\,\rho\subsetneq{\rm supp}\,\sigma$ in a state tuple we arrive at quantum extensions of $\big(p,q\big)\mapsto D_\alpha\big(p\big\|q\big)$ for $\alpha\in[0,\infty]$. Especially at $\alpha=0$, we have the quantum min-divergence $D_{\rm min}$,
\begin{equation}
D_{\rm min}\big(\rho\big\|\sigma\big)=-\log{\tr{\big({\rm supp}\,\rho\big)\sigma}},
\end{equation}
where ${\rm supp}\,\rho$ is identified with the projection onto the supporting subspace of $\rho$. This exemplifies a special case where varying support restrictions affect the conditions for large-sample quantum majorization. Another case is where we only require the supports of a quantum dichotomy to have a non-trivial intersection but neither of the supports needs to be included in the other. In this case, we naturally arrive at quantum extensions of the classical divergences $\big(p,q\big)\mapsto D_\alpha\big(p\big\|q\big)$ and $\big(p,q\big)\mapsto D_\alpha\big(q\big\|p\big)$ for $\alpha\in[0,1/2]$. At $\alpha=1$, we now also have the inversed min-divergence $D'_{\rm min}$,
\begin{equation}
D'_{\rm min}\big(\rho\big\|\sigma\big)=D_{\rm min}\big(\rho\big\|\sigma\big)=-\log{\tr{\big({\rm supp}\,\sigma\big)\rho}}.
\end{equation}
We would like to point out that, in this latter case, we can identify the minimal and maximal divergences \cite{matsumoto2018,Tomamichel2016Book}. These are essentially the sandwiched and Kubo-Ando divergences respectively. However, our relaxed support conditions introduce some technicalities which must be properly addressed. Yet, the following partly conjectural discussion on this special case should not be too far from the truth.

Let us define the set $\mc D$ defined similarly as in Section \ref{rem:rates} using the monotone homomorphisms and derivations of this (minimal restrictions) quantum semiring. As the normalizing power universal we shall use the qutrit dichotomy $u=(\rho_0,\sigma_0)$,
\begin{equation}
\rho_0:=\frac{1}{2}(|0\>\<0|+|1\>\<1|),\quad\sigma_0:=\frac{1}{2}(|0\>\<0|+|2\>\<2|).
\end{equation}
For any $\alpha\in[0,1]$, let us denote by $\mc D_\alpha$ the set of those $D\in\mc D$ such that $D$ reduces (up to normalization) to the R\'{e}nyi divergence $D_\alpha$ on commuting dichotomies. For any pair $\big(\rho,\sigma\big)$ of states, let us define the {\it shorted operator} $\rho_{\ll\sigma}$ \cite{Anderson_Trapp_1975},
\begin{equation}
\rho_{\ll\sigma}:=Q\rho Q-Q\rho\big(Q^\perp\rho Q^\perp\big)^{-1}\rho Q
\end{equation}
where $Q:={\rm supp}\,\sigma$ and the inverse is realized on the support of the operator. We now define $D^{\rm min}_\alpha,D^{\rm max}_\alpha\in\mc D_\alpha$ for all $\alpha\in[0,1]$ according to
\begin{align}
D^{\rm min}_\alpha\big(\rho\big\|\sigma\big)=-\log{\tr{\Big(\sigma^{1/2}\rho^{\frac{\alpha}{1-\alpha}}\sigma^{1/2}\Big)^{1-\alpha}}}
\end{align}
when $0\leq\alpha\leq 1/2$ and, for $1/2\leq\alpha\leq 1$, $D_\alpha^{\rm min}$ is defined as above but with arguments reversed and
\begin{align}
D^{\rm max}_\alpha\big(\rho\big\|\sigma\big)=-\log{\tr{\sigma^{1/2}\Big(\sigma^{-1/2}\rho_{\ll\sigma}\sigma^{-1/2}\Big)^\alpha\sigma^{1/2}}}.
\end{align}
Note that, up to normalization, $D^{\rm min}_\alpha$ are the sandwiched quantum R\'{e}nyi divergences of (\ref{eq:sandwiched}) (symmetrized around $\alpha=1/2$) and $D^{\rm max}$ are the Kubo-Ando divergences. We now (likely) have, for all $D\in\mc D_\alpha$ and $\alpha\in[0,1]$,
\begin{equation}
D^{\rm min}_\alpha\big(\rho\big\|\sigma\big)\leq D\big(\rho\big\|\sigma\big)\leq D^{\rm max}_\alpha\big(\rho\big\|\sigma\big).
\end{equation}
Using Theorem \ref{thm:Fritz2022} and this (still partly conjectural) observation, we obtain a sufficient (but not likely very tight) condition for large sample quantum majorization: Consider pairs $\big(\rho,\sigma\big)$ and $\big(\rho',\sigma'\big)$ of states where
\begin{equation}
{\rm supp}\,\rho\,\wedge\,{\rm supp}\,\sigma\neq\{0\}\neq{\rm supp}\,\rho'\,\wedge\,{\rm supp}\,\sigma',
\end{equation}
${\rm supp}\,\rho\subsetneq{\rm supp}\,\sigma$, and ${\rm supp}\,\sigma\subsetneq{\rm supp}\,\rho$ (we conjecture that this makes $\big(\rho,\sigma\big)$ a power universal of the semiring). If
\begin{equation}
D^{\rm min}_\alpha\big(\rho\big\|\sigma\big)>D^{\rm max}_\alpha\big(\rho'\big\|\sigma'\big)\qquad\forall\alpha\in[0,1],
\end{equation}
then, for any sufficiently large $n\in\N$, there is a quantum channel $\Phi_n$ such that
\begin{equation}
\Phi_n\Big(\rho^{\otimes n}\Big)=\big(\rho'\big)^{\otimes n},\quad\Phi_n\Big(\sigma^{\otimes n}\Big)=\big(\sigma'\big)^{\otimes n}.
\end{equation}

\section*{Acknowledgements}
We would like to thank the anonymous reviewers for useful comments and suggestions on our first draft of this paper. This project is supported by the National Research Foundation, Singapore through the National Quantum Office, hosted in A*STAR, under its Centre for Quantum Technologies Funding Initiative (S24Q2d0009). 

\bibliographystyle{ultimate}
\bibliography{bibliography}
	
\end{document}